\documentclass[11pt]{amsart}
\usepackage{amsmath, amsfonts, amsthm, amssymb}
\usepackage[all,cmtip,line]{xy}
\usepackage{array}
\usepackage{enumerate}
\usepackage{srcltx} 

\oddsidemargin .5in
\evensidemargin .5in
\textwidth      5.8in
\topmargin  1.0cm
\textheight     8.5in
\setlength{\footskip}{0.5in}

\newtheorem{thm}{Theorem}[section]
\newtheorem{lem}{Lemma}[section]
\newtheorem{prop}[lem]{Proposition}
\newtheorem{cor}[lem]{Corollary}

\newtheorem{rem}[lem]{Remark}

\newtheorem{claim}[lem]{Claim}
\numberwithin{equation}{section}



\newtheorem{example}{Example}


\newcommand{\diam}{ \mbox{diam}}






\newcommand \eps{\varepsilon}


\newlength{\originalbase}
\setlength{\originalbase}{\baselineskip}
\newcommand{\spacing}[1]{\setlength{\baselineskip}{#1\originalbase}}

\begin{document}               

\newcommand{\avint}{{- \hspace{-3.5mm} \int}}

\spacing{1}

\title{The $L^{\infty}$ estimates for parabolic complex Monge-Ampere and Hessian equations}
\author{Xiuxiong Chen, Jingrui Cheng}
\maketitle

\centerline{Dedicated to Prof.  Lawson for his 80th birthday}

\begin{abstract}
In this paper,  we consider a version of parabolic complex Monge-Ampere equations,  and use a PDE approach similar to Phong et al to establish $L^{\infty}$ and H\"older estimates.  
We also generalize the $L^{\infty}$ estimates to parabolic Hessian equations.
\end{abstract}
\section{introduction}
This work tries to make the first step to develop a parabolic analogue of the uniform $L^{\infty}$ and H\"older continuity estimates for the complex Monge-Ampere and Hessian equations.

The question of deriving $L^{\infty}$ and H\"older estimates for the solution to the complex Monge-Ampere has been studied extensively in the last decades,  with minimal assumptions on the right hand side.  In the pioneering work of Kolodziej \cite{Kolo98},  he proved that if $e^F\in L^1(\log L)^p$ for some $p>n$,  then we have $L^{\infty}$ apriori bound for the solution $\varphi$.  He also showed that the solution is continuous.  Later on,  \cite{GKZ} proved the H\"older continuity of the solution to complex Monge-Ampere in the case of Dirichlet problem in $\mathbb{C}^n$.
In the compact setting without boundary,  \cite{DDGHKZ} proved the H\"older continuity of solutions to complex Monge-Ampere when the right hand side is in $L^p(\omega_0^n)$ for some $p>1$.
When it comes to Hessian equations, Dinew and Kolodziej in \cite{DK} derived the $L^{\infty}$ estimates for the $k-$Hessian equations on compact K\"ahler manifolds,  under some integrability assumption of the right hand side.

All the works above are done using methods from pluri-potential theory.  Since these works,  it has been a major question whether such results can be obtained by pure PDE method.  
This question has been answered for the case of bounded domain in $\mathbb{C}^n$,  and by Phong et al for the case of compact K\"ahler manifolds.

As to the flow problems,  the one that is considered most in the literature takes the following form:
\begin{equation}\label{1.1NNew}
\partial_t\varphi=\log(\frac{\omega_{\varphi}^n}{\omega_0^n})-F(t,z).
\end{equation}
There are many works concerning (\ref{1.1NNew}),  see for example \cite{EGZ},  \cite{EGZ2},  \cite{GLZ}.

In this work,  we propose to consider a different parabolic version for complex Monge-Ampere equation than (\ref{1.1NNew}),  for which we can prove the $L^{\infty}$ estimates very similar to the classical result by Kolodziej.  Moreover,  we show that it is possible to prove stability estimates very similar to \cite{GKZ} Theorem 1.1,  from which the H\"older continuity readily follows by an approximation technique,  developed in \cite{BD},  \cite{Demailly92},  \cite{Demailly}.  The approach we take to prove these results are purely PDE,  hence our work can be seen as a generalization of \cite{WWZ} and \cite{Phong}  to parabolic setting.

In order to motivate the form of the parabolic 
equation we will be considering,  we would like to go back to the real case.  Krylov-Tso (see \cite{Krylov} and \cite{Tso}) developed a parabolic version of Alexandrov maximum principle,  which says that:\\

Let $u$ be a function defined on $[0,T]\times \Omega$ where $\Omega$ is a bounded domain in $\mathbb{R}^n$,  then 
\begin{equation*}
\sup_{[0,T]\times \Omega}u\le \sup_{\partial_P([0,T]\times \Omega)}u+C_n(\diam \Omega)^{\frac{n}{n+1}}\bigg(\int_E|\partial_tu\det D_x^2u|dtdx\bigg)^{\frac{1}{n+1}}.
\end{equation*}
In the above,  $\partial_P([0,T]\times \Omega)$ is the parabolic boundary,  given by $(\{0\}\times \Omega)\cup([0,T]\times \partial\Omega)$,  and $E$ is the set of $(t,x)$ on which $\partial_tu\ge 0$ and $D_x^2u\le 0$.\\

The integrand on the right hand side is given by $\partial_tu\det(-D_x^2u)$,  hence it is very natural to take this product as our parabolic operator,  so that the parabolic (real) Monge-Ampere becomes:
\begin{equation*}
(-\partial_tu)\det(D_x^2u)=f\ge 0.
\end{equation*}
The admissible solutions we are considering
are that $\partial_tu\le0$ and $D_x^2u\ge 0$.

In the complex setting,  it naturally generalizes to $(-\partial_t\varphi)(\sqrt{-1}\partial\bar{\partial}\varphi)^n$,  and the class of admissible solutions are given by $\partial_t\varphi\le 0$,  $\sqrt{-1}\partial\bar{\partial}\varphi\ge 0$.
Hence in the context of compact K\"ahler manifold,  the equation reads:
\begin{equation}\label{1.1}
\begin{split}
&(-\partial_t\varphi)\omega_{\varphi}^n=e^F\omega_0^n,\\
&\varphi(0,\cdot)=\varphi_0.
\end{split}
\end{equation}

Since our goal is to derive apriori estimates,  we will assume that the solutions are all smooth,  so that our calculations are all justified.  But the bound we obtain only have the said dependence quantitatively.

In the above,  we assume that the initial data $\varphi_0$ is $\omega_0$-psh and is bounded,  and the admissible solutions we are looking for satisfies $\partial_t\varphi\le 0$,  $\omega_0+\sqrt{-1}\partial\bar{\partial}\varphi\ge 0$.
In order to study the convergence behavior of $\varphi$ as $t\rightarrow \infty$,  it is necessary to do a normalization first: $\tilde{\varphi}=\varphi-h(t)$.  Then the equation is transformed to:
\begin{equation*}
(h'(t)-\partial_t\tilde{\varphi})\omega_{\tilde{\varphi}}^n=e^F\omega_0^n.
\end{equation*}
Here the function $h(t)$ is so chosen in order to satisfy the volume compatibility condition:
\begin{equation*}
h'(t)\int_M\omega_0^n=\int_Me^{F(t,\cdot)}\omega_0^n.
\end{equation*}
We plan to investigate the convergence question as $t\rightarrow \infty$ in future works,  but for now,  we will only fix $0<T<\infty$ and derive estimates on $[0,T]\times M$.

We will establish the following result regarding the parabolic complex Monge-Ampere equation,  which can be seen as a parabolic analogue of Kolodziej's $C^0$ estimate:
\begin{thm}
Consider the equation (\ref{1.1}) on $[0,T]\times M$.  Assume that the right hand side has $L^1(\log L)^p$ integrability for some $p>n+1$.  In other words,  we assume that
\begin{equation*}
Ent_p(F):=\int_{[0,T]\times M}e^F(|F|^p+1)\omega_0^ndt<\infty.
\end{equation*}
We also assume that $\varphi_0$ is also uniformly bounded.  
Then $||\varphi||_{L^{\infty}}$ is uniformly bounded depending only on $||\varphi_0||_{L^{\infty}}$,  $T$,  $Ent_p(F)$,  the background metric,  $p$ and $n$.
\end{thm}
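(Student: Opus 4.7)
The plan is to adapt the purely PDE approach of Phong et al.\ \cite{Phong} and Wang-Wang-Zhou \cite{WWZ} from the elliptic complex Monge-Amp\`ere equation to the present parabolic setting. The core idea is, for each level $s\ge 0$, to construct an auxiliary parabolic Monge-Amp\`ere equation whose solution plays the role of a barrier, then apply a parabolic maximum principle to obtain a decay estimate for the measure of the sublevel set $\{\varphi<-s\}$, from which the $L^\infty$ bound will follow by a De Giorgi-type iteration.

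After normalizing so that $\sup_{[0,T]\times M}\varphi=0$ (which is possible because $\partial_t\varphi\le 0$ and $\varphi_0$ is bounded), let $L=-\inf\varphi\ge 0$ be the quantity we wish to bound. For $s\in[0,L)$ pick a smooth cutoff $\tau_s\ge 0$ concentrating on $\{\varphi<-s\}$ and introduce the auxiliary parabolic equation
\begin{equation*}
(-\partial_t\psi_s)\,\omega_{\psi_s}^n=\frac{\tau_s(\varphi+s)}{A_s}\,e^F\omega_0^n \quad\text{on } [0,T]\times M,
\end{equation*}
where $A_s=\int_{[0,T]\times M}\tau_s(\varphi+s)e^F\omega_0^n\,dt$ is the normalization ensuring the volume compatibility condition, and the initial/terminal data are prescribed so that $\sup\psi_s\le 0$ (for instance $\psi_s(T,\cdot)=0$). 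One must first establish solvability of this auxiliary problem together with a first a priori $L^\infty$ bound for $\psi_s$ in terms of $A_s$; the Krylov-Tso parabolic Alexandrov-type inequality recalled in the introduction suggests a bound of the form $\|\psi_s\|_\infty\le C A_s^{\beta}$ for an appropriate exponent $\beta$.

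Next, apply the parabolic maximum principle to a test function of the form $\Phi(t,x)=-\varepsilon(-\psi_s+\Lambda)^\alpha-\varphi-s$, with $\Lambda>0$ large, $\varepsilon>0$ small, and $\alpha\in(0,1)$ chosen so that the cancellation between the two Monge-Amp\`ere equations works out. At a space-time minimum one has $\partial_t\Phi\ge 0$ and $i\partial\bar\partial\Phi\ge 0$; combining the time and Hessian inequalities through the arithmetic-geometric mean and then substituting the equations for $\varphi$ and $\psi_s$ yields, after some algebra, a pointwise estimate of the form $|\Phi_{\min}|\le C A_s^{\beta}$. Unraveling the definition on $\{\varphi<-s\}$ produces a differential inequality for the moment $I(s)=\int_{\{\varphi<-s\}}(-s-\varphi)e^F\omega_0^n\wedge dt$, and the entropy hypothesis $Ent_p(F)<\infty$ with $p>n+1$ is precisely what is needed to control the tail behavior of $A_s$ and close the De Giorgi iteration, giving a uniform bound on $L$.

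The main obstacle is the simultaneous presence of the time derivative and the complex Hessian in the operator $(-\partial_t\varphi)\omega_\varphi^n$. In the elliptic setting, the matrix inequality at the extremum of $\Phi$ leads directly to a comparison of determinants $\omega_\varphi^n$ versus $\omega_{\psi_s}^n$; here one only obtains separately $\partial_t\Phi\ge 0$ and $i\partial\bar\partial\Phi\ge 0$, and converting these into an inequality on the full product $(-\partial_t)\cdot\det$ is the main new technical point. Getting the AM-GM step quantitative, and choosing the exponent $\alpha$ accordingly, is where the threshold $p>n+1$ (rather than the elliptic $p>n$) naturally appears, reflecting the extra parabolic dimension. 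A secondary technical issue is the solvability and regularity of the auxiliary parabolic Monge-Amp\`ere equation when its right-hand side is only as regular as $\varphi$, which will likely be handled by an approximation by smooth data and a limiting argument.
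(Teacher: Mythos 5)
Your overall strategy does coincide with the paper's: an auxiliary parabolic Monge--Amp\`ere equation whose normalized right hand side is concentrated on the sublevel set $\{\varphi<-s\}$, a comparison function coupling $-\varphi-s$ with a concave power of $-\psi_s+\Lambda$, a maximum principle applied through the linearized operator $-\partial_t+(-\partial_t\varphi)\Delta_\varphi$ with an AM--GM/Young step, and finally a De Giorgi-type iteration in which the entropy hypothesis enters. But there is a genuine gap at the one step that makes the whole scheme work: you propose to prove a bound $\|\psi_s\|_{L^\infty}\le C A_s^{\beta}$ for the auxiliary solution, via the Krylov--Tso parabolic Alexandrov inequality. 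This is not obtainable. At that stage the only information on the right hand side $\tau_s(\varphi+s)e^F/A_s$ is that its total mass is $1$; an $L^\infty$ bound for a (parabolic) complex Monge--Amp\`ere equation requires strictly better than $L^1$ control of the data, which is essentially the statement being proved, so the step is circular. (Moreover the ABP-type inequality is stated on a domain with parabolic boundary terms, and the quantity $\int|\partial_t\psi\det D^2_x\psi|$ it involves is not controlled by the complex equation on a closed manifold.) The paper never bounds $\psi_j$ in $L^\infty$. Instead it uses that the normalized right hand side has mass $1$ and $\psi_j|_{t=0}=0$ to bound $\sup_M\psi_j(t,\cdot)$ through the monotonicity of the $I$-functional (Lemma \ref{l2.1}), and then invokes the uniform exponential integrability of $\omega_0$-psh functions ($\alpha$-invariant) to get $\sup_{t\in[0,T]}\int_M e^{-\alpha_0\psi_j}\omega_0^n\le C$ (Corollary \ref{c2.2}). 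The pointwise comparison $c_nA_{j,s}^{-1/(n+1)}((-\varphi-s)^+)^{(n+2)/(n+1)}\le-\psi_j+C_nA_{j,s}$ of Lemma \ref{l2.4} then transfers this exponential bound to $(-\varphi-s)^+$, giving the Moser--Trudinger inequality of Proposition \ref{p2.5}; that, combined with the elementary inequality $x^pe^y\le e^y(1+y)^p+C(p)e^{2x}$ and H\"older, yields $r\phi(s+r)\le B_0\phi(s)^{1+\delta}$ with $\delta=\tfrac{1}{n+1}-\tfrac1p>0$ (this is where $p>n+1$ actually enters, not in the AM--GM step), and Lemma \ref{l2.8} closes the iteration. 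Without this exponential-integrability substitute for your unavailable $L^\infty$ bound, your chain of estimates does not close.

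Two smaller points. The auxiliary data should be prescribed at $t=0$ (the paper takes $\psi_j|_{t=0}=0$); since the equation forces $\partial_t\psi_s<0$, your suggestion $\psi_s(T,\cdot)=0$ would give $\psi_s\ge0$ rather than $\sup\psi_s\le0$, and it is in any case the ill-posed direction for this forward-parabolic problem. Also, at a space-time minimum of your $\Phi$ attained at $t_0>0$ one has $\partial_t\Phi\le0$ and $i\partial\bar\partial\Phi\ge0$ (your stated pair $\partial_t\Phi\ge0$, $i\partial\bar\partial\Phi\ge0$ is inconsistent); the paper works at a positive maximum of its $\Phi$, where $L\Phi\le0$, and rules it out by the choice of $\eps$ and $\Lambda$.
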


After this,  we consider the issue of H\"older contiuity of the solution when the initial value $\varphi_0\in C^{\bar{\alpha}}$.  We show that:
\begin{thm}\label{t1.2}
Let $\varphi$ solve (\ref{1.1}),  with $\varphi_0\in C^{\bar{\alpha}}$ for some $0<\bar{\alpha}<1$ and $e^F\in L^{p_0}(\omega_0^ndt)$ for some $p_0>1$.  Then for any $0\le s<t\le T$,  and any $\alpha<\frac{2}{1+q_0(n+1)}$,  
\begin{equation*}
|\varphi(t,x)-\varphi(s,x)|\le C(t-s)^{\frac{\alpha}{2}}.
\end{equation*}
For any $x,\,y\in M$,  and $t\in[0,T]$,  and any $\alpha\le \bar{\alpha}$,  $\alpha<\frac{2}{1+q_0(n+1)}$,
\begin{equation*}
|\varphi(t,x)-\varphi(t,y)|\le C|x-y|^{\alpha}.
\end{equation*}
Here $q_0=\frac{p_0}{p_0-1}$,  and the constant $C$ depends only on the background metric,  $n$,  $||e^F||_{L^{p_0}}$,  the $C^{\bar{\alpha}}$ norm of $\varphi_0$,  $T$ and choice of $\alpha<\frac{2}{1+q_0(n+1)}$.
\end{thm}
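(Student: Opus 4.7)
The plan is to reduce Theorem \ref{t1.2} to a parabolic stability estimate combined with the regularization technique of \cite{BD}, \cite{Demailly92}, \cite{Demailly}, mirroring the elliptic strategy of \cite{GKZ}, \cite{DDGHKZ}. The two ingredients are (i) a stability estimate of the form
\[
\|\varphi_1-\varphi_2\|_{L^\infty([0,T]\times M)} \leq C\bigl(\|\varphi_{1,0}-\varphi_{2,0}\|_{L^\infty}^{\gamma_1} + \|e^{F_1}-e^{F_2}\|_{L^{p_0}}^{\gamma_2}\bigr)
\]
for admissible solutions of (\ref{1.1}), with $\gamma_i>0$ computable from $p_0$ and $n$; and (ii) the Demailly-Blocki regularization of $\varphi_0\in C^{\bar\alpha}$ by smooth $(1+C\epsilon^{\bar\alpha})\omega_0$-psh functions $\varphi_{0,\epsilon}$ satisfying $\|\varphi_0-\varphi_{0,\epsilon}\|_\infty \leq C\epsilon^{\bar\alpha}$ with polynomial-in-$\epsilon^{-1}$ control on the higher derivatives. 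Ingredient (i) would be established by the PDE energy method of \cite{WWZ}, \cite{Phong} adapted to the parabolic setting: test the equation for the difference $\varphi_1-\varphi_2$ against a cut-off of $(\varphi_1-\varphi_2-k)_+$, use AM-GM on the mixed term $(-\partial_t\varphi)\det\omega_\varphi$ to control the measure of the super-level sets, and iterate in the style of De Giorgi.

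For the spatial H\"older estimate, let $\varphi_\epsilon$ be the admissible solution of the parabolic Monge-Ampere equation on the perturbed background $(1+C\epsilon^{\bar\alpha})\omega_0$ with smooth initial data $\varphi_{0,\epsilon}$. Standard parabolic Schauder estimates for this smooth problem give quantitative bounds $\|\nabla_x\varphi_\epsilon(t,\cdot)\|_\infty \leq C\epsilon^{-N_1}$, and by (i) $\|\varphi-\varphi_\epsilon\|_\infty \leq C\epsilon^{\bar\alpha\gamma_1}$. Then
\[
|\varphi(t,x)-\varphi(t,y)| \leq 2\|\varphi-\varphi_\epsilon\|_\infty + \|\nabla_x\varphi_\epsilon\|_\infty\, d(x,y),
\]
and optimizing $\epsilon$ in terms of $d(x,y)$ gives the desired H\"older exponent. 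Tracking the constants through the parabolic Kolodziej-type estimate produces the ceiling $2/(1+q_0(n+1))$, while the constraint $\alpha\leq\bar\alpha$ enters directly through the regularization of $\varphi_0$.

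For the temporal estimate, the bound $\varphi(t,x)\leq\varphi(s,x)$ for $s\leq t$ is immediate from admissibility. For the reverse inequality, the same approximants satisfy $\|\partial_t\varphi_\epsilon\|_\infty \leq C\epsilon^{-N_2}$ (via the maximum principle applied after differentiating (\ref{1.1}) in $t$), so
\[
\varphi(s,x)-\varphi(t,x) \leq 2\|\varphi-\varphi_\epsilon\|_\infty + \|\partial_t\varphi_\epsilon\|_\infty\,(t-s),
\]
and balancing $\epsilon$ against $(t-s)$ yields the exponent $\alpha/2$. The factor $1/2$ reflects the parabolic scaling $N_2=2N_1$ characteristic of second-order parabolic equations.

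The main obstacle is obtaining the stability estimate (i) with the correct exponents, since the time derivative is now part of the nonlinear operator and the De Giorgi iteration must be performed on the parabolic cylinder with measure $\omega_0^n\wedge dt$. The replacement $n\to n+1$ in the exponent $2/(1+q_0(n+1))$ arises precisely from this extra parabolic dimension, so the AM-GM step and the iteration must be executed carefully to produce the sharp bound; once this is in place, the interpolation with the Demailly-Blocki smoothing and the optimization in $\epsilon$ are essentially routine.
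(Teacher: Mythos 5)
Your overall two-ingredient plan (a stability estimate plus Demailly--Blocki regularization) is in the right spirit, but the specific reduction you propose has a genuine gap at its core: the quantitative derivative bounds $\|\nabla_x\varphi_\eps\|_{L^\infty}\le C\eps^{-N_1}$ and $\|\partial_t\varphi_\eps\|_{L^\infty}\le C\eps^{-N_2}$ for the approximating solutions are not available. The right hand side $e^F$ is only in $L^{p_0}$, so the perturbed problem you solve (same $F$, regularized initial data, tilted background metric) admits no parabolic Schauder theory at all; and even if you also mollified $F$, the known second order estimates (the Yau-type computation in Proposition \ref{p2.3} of the paper) involve $\Delta F$ and $\partial_t F$ and produce constants that are not polynomially controlled in $\eps^{-1}$ with an exponent computable from $(n,p_0)$ alone. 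Since your Hölder exponent is obtained by balancing $\|\varphi-\varphi_\eps\|_\infty\sim\eps^{\bar\alpha\gamma_1}$ against $\eps^{-N_1}d(x,y)$, the unknown (and uncontrolled) $N_1,N_2,\gamma_1$ leave no mechanism that produces the stated ceiling $2/(1+q_0(n+1))$; the claim that "tracking the constants" yields it is where the argument breaks. A secondary but real issue is that your ingredient (i), a two-solution stability estimate proved by testing the difference and running De Giorgi, is itself unestablished: the operator $(-\partial_t\varphi)\omega_\varphi^n$ has no evident variational/energy structure in time, and in any case (i) could not be applied to the regularizations one actually wants to compare with $\varphi$, since those are not solutions of any parabolic Monge--Ampère equation.

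The paper's route avoids both difficulties. Its stability statement (Theorem \ref{t2.2}) is one-sided and compares $\varphi$ with an \emph{arbitrary} competitor $v$ that is merely decreasing in $t$ and (almost) $\omega_0$-psh, bounding $\sup(v-\varphi)$ by $\|(v-\varphi)^+\|_{L^1}^{\alpha}$; it is proved by the auxiliary-solution/maximum-principle method of Guo--Phong--Tong (the weighted Moser--Trudinger inequality of Lemma \ref{l2.9} plus the De Giorgi lemma \ref{l2.8}), not by energy testing. One then never solves a regularized PDE: for time regularity take $v$ to be the time average $\varphi_{1,\eps}$ of $\varphi$ itself (with $\|(\varphi_{1,\eps}-\varphi)^+\|_{L^1}\le C\eps$), and an elementary lemma for monotone functions converts the resulting bound into $|\varphi(t,x)-\varphi(s,x)|\le C|t-s|^{\alpha}$ with $\alpha<\frac{1}{1+q_0(n+1)}$, i.e.\ the stated $(t-s)^{\alpha/2}$ bound — the factor $1/2$ is a relabeling, not a consequence of parabolic scaling of approximants. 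For space regularity take $v=\varphi_\eps$, the Kiselman--Legendre transform of $\varphi$, whose Hessian defect is $O(\eps^{\gamma})$ and which satisfies $\|(\varphi_\eps-\varphi)^+\|_{L^1}\le C\eps^2$; the conclusion $\rho_{\theta\eps}\varphi-\varphi\le C\eps^{\gamma}$ combined with the Berman--Demailly lower bound (Proposition \ref{p2.16}(4)) yields a Morrey-type bound $\int_{B(z,r)}\Delta\varphi\le Cr^{2n-2+\gamma}$, and a Green-function/Riesz-potential lemma gives the spatial Hölder estimate. In particular the exponent $2/(1+q_0(n+1))$ arises from the De Giorgi exponent $\mu<\frac{1}{q_0(n+1)}$ together with the quadratic gain $\eps^2$ in the $L^1$ bound from Demailly's approximation, not from an "extra parabolic dimension in the AM--GM step" as you conjecture. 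If you want to salvage your outline, the fix is to replace your two-solution stability estimate and smooth approximants by a competitor-type stability estimate of this kind, which is exactly what makes the rough-data setting tractable.
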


Next we will consider more general Hessian type equations,  in the form:
\begin{equation*}
f(-\partial_t\varphi,\lambda[h_{\varphi}])=e^F.
\end{equation*}
In the above $(h_{\varphi})_j^{i}=(\omega_0)^{i\bar{k}}(\omega_{\varphi})_{j\bar{k}}$ and $\lambda[h_{\varphi}]$ denotes the eigenvalues of $h_{\varphi}$ (which can be shown to be invariant under holomorphic coordinate change).  We also assume that $f$ is a $C^1$ function in terms of its variables.  The more precise structural assumptions on $f$ are set forth in the last section,  where we will also give some examples of $f$ satisfying our assumptions.

Under these assumptions,  we have the following $L^{\infty}$ estimates for the solution to the Hessian equation:
\begin{thm}
Let $\varphi$ be a solution to the equation $f(-\partial_t\varphi,\lambda[h_{\varphi}])=e^F$ on $[0,T]\times M$,  where $f$ satisfies the above structural assumption and $(-\partial_t\varphi,\lambda[h_{\varphi}])\in\Gamma$.  Assume also that for some $p>n+1$,  we have
\begin{equation*}
Ent_p(F):=\int_{[0,T]\times M}e^{(n+1)F}|F|^p\omega_0^ndt<\infty.
\end{equation*}
Then we can estimate $||\varphi||_{L^{\infty}}$ depending only on $||\varphi_0||_{L^{\infty}}$,  $T$,  the background metric,  $Ent_p(F)$ and $n$.
\end{thm}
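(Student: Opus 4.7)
\medskip
\noindent\textbf{Proof plan.} The strategy is to reduce this theorem to Theorem~1.1 by means of a pointwise comparison between the operator $f$ and the parabolic determinant $(-\partial_t\varphi)\omega_\varphi^n$, and then to apply (the proof of) the parabolic complex Monge--Amp\`ere $L^\infty$ bound to the resulting subsolution.

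First, from the structural assumptions on $f$ to be set forth in the last section --- positivity, monotonicity in each variable on the admissible cone $\Gamma$, and degree--one homogeneity, together with a concavity condition analogous to that used in the elliptic Hessian theory --- I expect one can extract an AM--GM / Maclaurin-type inequality
\begin{equation*}
f(\tau,\lambda)^{n+1} \;\geq\; c_0\, \tau\,\lambda_1\cdots\lambda_n \qquad \text{for all } (\tau,\lambda)\in\Gamma,
\end{equation*}
with some $c_0>0$ depending only on $n$ and $f$. When $f=\sigma_{k+1}^{1/(k+1)}$ acting on $(\tau,\lambda_1,\ldots,\lambda_n)$ this is the classical Maclaurin inequality; for general $f$ it follows by normalizing on the hyperplane $\tau+\sum_i\lambda_i=n+1$ and using the homogeneity and concavity. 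Evaluating at $\tau=-\partial_t\varphi$ and $\lambda=\lambda[h_\varphi]$ gives the pointwise inequality
\begin{equation*}
(-\partial_t\varphi)\,\omega_\varphi^n \;\leq\; c_0^{-1}\,e^{(n+1)F}\,\omega_0^n \qquad \text{on } [0,T]\times M.
\end{equation*}

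Setting $\tilde F:=(n+1)F-\log c_0$, this last inequality says that $\varphi$ is a parabolic admissible subsolution of the complex Monge--Amp\`ere equation $(-\partial_t\psi)\,\omega_\psi^n=e^{\tilde F}\omega_0^n$. The PDE argument establishing Theorem~1.1 --- comparison with an auxiliary Monge--Amp\`ere equation and the parabolic maximum principle --- applies verbatim to such subsolutions, since the inequality is in the correct direction for the comparison. Because
\begin{equation*}
\int_{[0,T]\times M} e^{\tilde F}\bigl(|\tilde F|^p+1\bigr)\omega_0^n\,dt \;\leq\; C(n,c_0,p)\bigl(Ent_p(F)+1\bigr) < \infty,
\end{equation*}
Theorem~1.1 (applied with $\tilde F$ in place of $F$) yields the claimed bound on $\|\varphi\|_{L^\infty}$ in terms of $\|\varphi_0\|_{L^\infty}$, $T$, $Ent_p(F)$, the background metric, and $n$.

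\medskip
\noindent\emph{Main obstacle.} The only non-routine ingredient is deriving the Maclaurin-type inequality $f^{n+1}\geq c_0\,\tau\prod_i\lambda_i$ from the structural axioms on $f$: this is precisely what pins down the exponent $(n+1)$ in the entropy $\int e^{(n+1)F}|F|^p$ and explains why the hypothesis involves $(n+1)F$ rather than $F$. A secondary point is to verify that the proof of Theorem~1.1 does not use the Monge--Amp\`ere equation with equality in any essential way, so that the conclusion persists for admissible subsolutions; since the method is based on the maximum principle, this should be routine.
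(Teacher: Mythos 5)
There is a genuine gap, and it sits exactly at the point you flagged as ``routine.'' Your reduction asks that the proof of Theorem 1.1 apply verbatim to a function $\varphi$ satisfying only the differential inequality $(-\partial_t\varphi)\omega_\varphi^n\le e^{\tilde F}\omega_0^n$. But an admissible solution of the Hessian equation only satisfies $(-\partial_t\varphi,\lambda[h_\varphi])\in\Gamma$, and $\Gamma$ is in general much larger than the positive cone: $\omega_\varphi$ need not be positive and $\varphi(t,\cdot)$ need not be $\omega_0$-psh. The proof of Theorem 1.1 uses $\omega_0$-plurisubharmonicity and $\omega_\varphi>0$ essentially, not just through the equation: Lemma \ref{l2.1} (the $I$-functional and the comparison between $\int_M\varphi\,\omega_0^n$ and $\sup_M\varphi$), Corollary \ref{c2.2} (the $\alpha$-invariant bound $\sup_t\int_M e^{-\alpha_0\varphi}\omega_0^n\le C$, which is what bounds the energy $E=\int(-\varphi)e^F$), and Lemma \ref{l2.4}, whose linearized operator $-\partial_t+(-\partial_t\varphi)\Delta_\varphi$ and AM--GM step $tr_{\omega_\varphi}\omega_\psi\ge n(\omega_\psi^n/\omega_\varphi^n)^{1/n}$ require $\omega_\varphi>0$ and $-\partial_t\varphi>0$ pointwise. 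None of these survive for a general admissible Hessian solution, so the ``subsolution'' reduction collapses. This is precisely why the paper, in Section 3, (i) replaces the MA linearization by $Lu=-\partial_0f\,\partial_tu+\frac{\partial f}{\partial h_{ij}}g^{i\bar k}\partial_{j\bar k}u$ and exploits that at a positive maximum of the test function $\Phi=\eps(-\varphi-s)-(-\psi+\Lambda)^{\frac{n+1}{n+2}}$ the sign conditions $\partial_t\Phi\ge0$, $\Phi_{i\bar j}\le0$ force $(-\partial_t\varphi,\lambda[h_\varphi])$ into $\Gamma_+$ at that point, so the structural assumptions (3)--(4) can be invoked there (this is where $e^{(n+1)F}$ enters, via $\partial_0f\det(\partial f/\partial h_{ij})\ge c_0$ and AM--GM against the auxiliary MA solution); and (ii) --- the real work of the section --- proves the exponential integrability of $\varphi$ itself (the substitute for Corollary \ref{c2.2}, hence the bound on $E$) by a localized argument with a cutoff $\eta$, the function $e^{\rho}\eta$, Claim \ref{claim3.7} identifying the contact set with the positive cone, and the parabolic Alexandrov--Bakelman--Pucci (Krylov--Tso) maximum principle iterated over time intervals of length $T_0$. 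Your proposal contains no replacement for this step.

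A secondary inaccuracy: the paper's structural hypotheses are only (1)--(4) above; homogeneity and concavity are not assumed. One can indeed get $f^{n+1}\ge c\,\tau\prod_i\lambda_i$ on the positive cone $\Gamma_+$ from (3)--(4) by AM--GM applied to $\sum_i\lambda_i\partial f/\partial\lambda_i\le C_0 f$, but conditions (3)--(4) are only postulated on $\Gamma_+$, so the pointwise comparison you want is not available on all of $\Gamma$ (where, e.g.\ for $\sigma_k$-type cones with several negative eigenvalues, $\tau\prod_i\lambda_i$ can still be positive). The paper only ever uses this kind of comparison at points where the maximum principle or the ABP contact set forces the solution into $\Gamma_+$, which is the correct way to exploit your observation.
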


The plan of the paper is as follows:\\

In section 2,  we will derive estimates for the parabolic complex Monge-Ampere equation,  including the $L^{\infty}$ estimates as well as H\"older estimates.

In section 3,  we generalize the $L^{\infty}$ estimates to more general Hessian equations.

{\bf Acknowlegement} The first-named author went through an exciting yet stressful period of time in the first half of last decade during which Prof. Lawson invited him to frequently drop by his office, and the wisdom he imparted in those wide-ranging conversations has had a great impact on the author. Such a kindness makes it an immense pleasure for us to dedicate this article to him, in celebration of his 80th birthday, in appreciation of his inspiring mathematical work, and in gratitude for his mentorship of young mathematicians.

\section{The parabolic complex Monge-Ampere equation}
In this section,  we consider the parabolic complex Monge-Ampere equation:
\begin{equation}\label{2.1NNN}
\begin{split}
&(-\partial_t\varphi)\omega_{\varphi}^n=e^F\omega_0^n,\\
&\varphi|_{t=0}=\varphi_0.
\end{split}
\end{equation}
Here we only consider solutions which are $\omega_0$-psh and that $\partial_t\varphi\le 0$.  The function $F(t,x)$ on the right hand side is given.  We also assume that the initial value $\varphi_0$ is bounded.

First we show that $\sup_M\varphi$ is bounded.
\begin{lem}\label{l2.1}
Assume that $\int_{[0,T]\times M}e^F\omega_0^ndt<\infty$,  then we have 
\begin{equation*}
|\sup_M\varphi|\le C.
\end{equation*}
Here $C$ depends on the background metric,  an upper bound for $\int_{[0,T]\times M}e^F\omega_0^ndt$ and $||\varphi_0||_{L^{\infty}}$.
\end{lem}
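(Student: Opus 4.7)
The upper bound, $\sup_M\varphi(t,\cdot)\le||\varphi_0||_{L^{\infty}}$, is immediate from the sign constraint $\partial_t\varphi\le 0$, which gives $\varphi(t,x)\le\varphi_0(x)$ pointwise. The substance lies in the lower bound, which I plan to establish by tracking a parabolic analogue of the Aubin--Yau energy functional.

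For $s\in[0,1]$ set $\omega_s:=(1-s)\omega_0+s\omega_\varphi=\omega_0+s\sqrt{-1}\partial\bar\partial\varphi$, a family of positive $(1,1)$-forms interpolating between $\omega_0$ and $\omega_\varphi$, and define
\[
I(t):=\int_0^1\!\int_M\varphi(t,\cdot)\,\omega_s^n\,ds.
\]
The first key step is to compute $\frac{dI}{dt}$. Differentiating under the integral, using $\partial_t\omega_s=s\sqrt{-1}\partial\bar\partial(\partial_t\varphi)$, and integrating by parts against the closed form $\omega_s^{n-1}$, the integrand should become $\int_M\partial_t\varphi\cdot\frac{d}{ds}[s\omega_s^n]$; the $s$-integral then telescopes to $\int_M\partial_t\varphi\,\omega_\varphi^n$, which by the PDE (\ref{2.1NNN}) equals $-\int_Me^F\omega_0^n$. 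Hence $I(t)=I(0)-\int_0^t\!\int_Me^F\omega_0^n\,d\tau$. Since each $\omega_s^n$ has total mass $V:=\int_M\omega_0^n$, the trivial bound $|I(0)|\le||\varphi_0||_{L^{\infty}}V$ gives $I(t)\ge-C$ for a constant $C$ depending only on the allowed data.

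The second key step is a monotonicity in $s$: writing $J(s,t):=\int_M\varphi\,\omega_s^n$, integration by parts yields
\[
\partial_sJ=n\int_M\varphi\sqrt{-1}\partial\bar\partial\varphi\wedge\omega_s^{n-1}=-n\int_M\sqrt{-1}\partial\varphi\wedge\bar\partial\varphi\wedge\omega_s^{n-1}\le0,
\]
since $\omega_s^{n-1}$ is a positive closed form. In particular $J(0,t)\ge\int_0^1 J(s,t)\,ds=I(t)\ge-C$, so the spatial average $\bar\varphi(t):=\frac{1}{V}\int_M\varphi\,\omega_0^n$ satisfies $\bar\varphi(t)\ge-C/V$. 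Since $\sup_M\varphi(t,\cdot)\ge\bar\varphi(t)$ trivially, the lower bound $\sup_M\varphi(t,\cdot)\ge-C/V$ follows.

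The main obstacle is really just spotting the correct auxiliary energy $I(t)$; once chosen, its time derivative telescopes cleanly to the PDE's right-hand side. The naive attempt to estimate $\int_M(-\partial_t\varphi)\omega_0^n$ directly would fail, since the equation forces the factor $\omega_0^n/\omega_\varphi^n$ into the integrand and this can be arbitrarily large — it is precisely the pairing of $-\partial_t\varphi$ with $\omega_\varphi^n$ (not $\omega_0^n$) that is controlled by $\int e^F\omega_0^n$, and $I(t)$ is designed so that its $t$-derivative reproduces exactly this pairing.
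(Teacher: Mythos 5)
Your proof is correct and follows essentially the same route as the paper: your functional $I(t)=\int_0^1\int_M\varphi\,\omega_s^n\,ds$ is exactly the paper's $I$-functional $\frac{1}{n+1}\int_M\varphi\sum_{j=0}^n\omega_0^{n-j}\wedge\omega_\varphi^j$ (integrate the binomial expansion in $s$), its time derivative telescopes to $\int_M\partial_t\varphi\,\omega_\varphi^n=-\int_Me^F\omega_0^n$ just as in the paper, and your $s$-monotonicity of $J(s,t)$ is the same integration-by-parts inequality the paper uses to compare $\int_M\varphi\,\omega_0^n$ with $I$. The only (harmless) variation is at the end, where you use the trivial bound $\sup_M\varphi\ge\frac{1}{V}\int_M\varphi\,\omega_0^n$ instead of quoting the standard two-sided estimate for $\omega_0$-psh functions, which suffices for the lower bound.
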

\begin{proof}
Since $\partial_t\varphi\le 0$,  we have
\begin{equation*}
\sup_{M}\varphi(t,\cdot)\le \sup_M\varphi_0\le ||\varphi_0||_{L^{\infty}}.
\end{equation*}
To estimate the lower bound for $\sup_M\varphi$,  we have to use the equation.  We consider the $I$-functional,  defined as:
\begin{equation*}
I(\varphi)=\frac{1}{n+1}\int_M\varphi\sum_{j=0}^n\omega_0^{n-j}\wedge\omega_{\varphi}^j.
\end{equation*}
To estimate the lower bound of $\sup_M\varphi$,  we will first get a lower bound for the $I$ functional,  then we will get a lower bound for $\int_M\varphi\omega_0^n$,  then we obtain a lower bound for $\sup_M\varphi$,  using some well-known arguments.

It is straightforward to find that
\begin{equation*}
\frac{d}{dt}I(\varphi)=\int_M\partial_t\varphi\omega_{\varphi}^n=-\int_Me^F\omega_0^n.
\end{equation*}
Therefore,  for any $t'\in[0,T]$,  we have that
\begin{equation*}
I(\varphi)=I(\varphi_0)+\int_0^{t'}\frac{d}{dt}I(\varphi)dt=I(\varphi_0)-\int_{[0,t']\times M}e^F\omega_0^ndt
\end{equation*}
Therefore,  we get that $I(\varphi)$ is bounded below on $t'\in[0,T]$,  with a lower bound having the said dependence.

Now we estimate a lower bound for $\int_M\varphi\omega_0^n$.  We can compute:
\begin{equation*}
\begin{split}
&\int_M\varphi\omega_0^n-I(\varphi)=\int_M\varphi\frac{1}{n+1}\sum_{j=0}^n\omega_0^{n-j}\wedge(\omega_0^j-\omega_{\varphi}^j)\\
&=\frac{1}{n+1}\int_M\varphi\sum_{j=0}^n\omega_0^{n-j}\wedge\sqrt{-1}\partial\bar{\partial}(-\varphi)\sum_{l=0}^{j-1}\omega_0^{j-1-l}\wedge\omega_{\varphi}^l\\
&=\frac{1}{n+1}\int_M\sqrt{-1}\partial\varphi\wedge \bar{\partial}\varphi\wedge\sum_{j=0}^n\sum_{l=0}^{j-1}\omega_0^{n-1-l}\wedge\omega_{\varphi}^l\ge 0.
\end{split}
\end{equation*}
So $\int_M\varphi\omega_0^n$ is bounded below as well.  On the other hand,  it is well known that for any $\omega_0$-psh function $\varphi$,  one has:
\begin{equation*}
|\frac{1}{vol(M)}\int_M\varphi\omega_0^n-\sup_M\varphi|\le C.
\end{equation*}
Here $C$ depends only on the background metric.  So we obtain that $\sup_M\varphi$ is also bounded from below.
\end{proof}
As a corollary,  we obtain that
\begin{cor}\label{c2.2}
There exists constant $\alpha_0>0$ depending only on the background metric,  such that
\begin{equation*}
\sup_{t\in[0,T]}\int_Me^{-\alpha_0\varphi}\omega_0^n\le C.
\end{equation*}
Here $C$ depends on the background metric,  an upper bound for $\int_{[0,T]\times M}e^F\omega_0^ndt$ and $||\varphi_0||_{L^{\infty}}$.
\end{cor}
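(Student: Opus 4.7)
The plan is to reduce this to the classical uniform integrability estimate for $\omega_0$-psh functions (Tian's $\alpha$-invariant, or equivalently Hörmander–Skoda type bounds), using the two-sided bound on $\sup_M \varphi(t,\cdot)$ supplied by Lemma \ref{l2.1}.

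First, I would normalize at each time slice. Fix $t \in [0,T]$ and set $\tilde\varphi(t,\cdot) = \varphi(t,\cdot) - \sup_M\varphi(t,\cdot)$. Since $\varphi(t,\cdot)$ is $\omega_0$-psh, so is $\tilde\varphi(t,\cdot)$, and by construction $\sup_M \tilde\varphi(t,\cdot) = 0$. By Lemma \ref{l2.1}, the quantity $|\sup_M\varphi(t,\cdot)|$ is bounded by a constant $C_1$ depending only on the background metric, $\int_{[0,T]\times M} e^F\omega_0^n\,dt$, and $\|\varphi_0\|_{L^\infty}$, uniformly in $t\in[0,T]$.

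Next, I would invoke the well-known uniform integrability result: there exist constants $\alpha_0>0$ and $C_0$, depending only on $(M,\omega_0)$, such that for every $\omega_0$-psh function $\psi$ with $\sup_M\psi = 0$,
\begin{equation*}
\int_M e^{-\alpha_0 \psi}\omega_0^n \le C_0.
\end{equation*}
This is a standard consequence of Hörmander's $L^2$ technique (the compactness of the set of normalized $\omega_0$-psh functions in $L^1$ combined with Skoda's integrability theorem). Applying this to $\tilde\varphi(t,\cdot)$ at each $t$ gives $\int_M e^{-\alpha_0 \tilde\varphi(t,\cdot)}\omega_0^n \le C_0$.

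Finally, I would unwind the normalization:
\begin{equation*}
\int_M e^{-\alpha_0 \varphi(t,\cdot)}\omega_0^n = e^{-\alpha_0 \sup_M\varphi(t,\cdot)} \int_M e^{-\alpha_0 \tilde\varphi(t,\cdot)}\omega_0^n \le e^{\alpha_0 C_1}\,C_0,
\end{equation*}
which is the desired uniform bound. There is no real obstacle here; the only ingredient beyond Lemma \ref{l2.1} is the classical $\alpha$-invariant estimate, and the point of the corollary is simply to package the lemma in an integral form convenient for later use.
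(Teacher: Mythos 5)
Your proposal is correct and follows essentially the same route as the paper: the paper likewise applies the uniform exponential integrability ($\alpha$-invariant) estimate to the normalized slice $\varphi(t,\cdot)-\sup_M\varphi(t,\cdot)$ and then uses the bound on $\sup_M\varphi$ from Lemma \ref{l2.1} to remove the normalization. No gaps.
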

\begin{proof}
Since $\varphi(t,\cdot)$ is $\omega_0$-psh for each $t\in[0,T]$,  we have that
\begin{equation*}
\sup_{t\in[0,T]}\int_Me^{-\alpha_0(\varphi-\sup_M\varphi)}\omega_0^n\le C.
\end{equation*}
From Lemma \ref{l2.1},  $\sup_M\varphi$ is uniformly bounded,  with the said dependence.  
\end{proof}
\subsection{Existence of smooth solution with smooth data}
Since later on,  we will need to use the solution to equation (\ref{2.1NNN}) as auxiliary functions for estimates,  we need to establish the solvability of (\ref{2.1NNN}) when the data is smooth.  More precisely,  we have:
\begin{prop}\label{p2.3}
Let $\varphi_0$ be a smooth function on $M$ with $\omega_0+\sqrt{-1}\partial\bar{\partial}\varphi_0>0$ and $F(t,x)$ is a smooth function on $[0,T]\times M$.  Then there exists a unique smooth solution to (\ref{2.1NNN}) on $[0,T]\times M$ starting from $\varphi_0$ such that $-\partial_t\varphi>0$ and $\omega_0+\sqrt{-1}\partial\bar{\partial}\varphi>0$.
\end{prop}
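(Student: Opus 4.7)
The plan is to use the continuity method applied to the logarithmic form of the equation. For any admissible $\varphi$ (satisfying $-\partial_t\varphi>0$ and $\omega_\varphi>0$), equation (\ref{2.1NNN}) is equivalent to
\begin{equation*}
\mathcal{F}(\varphi):=\log(-\partial_t\varphi)+\log\frac{\omega_\varphi^n}{\omega_0^n}-F=0.
\end{equation*}
A convenient starting point is $\varphi^{(0)}(t,x):=\varphi_0(x)-t$, which solves $\mathcal{F}(\varphi^{(0)})=0$ for $F_0:=\log(\omega_{\varphi_0}^n/\omega_0^n)$. Interpolating $F_s:=sF+(1-s)F_0$ for $s\in[0,1]$, I would let $S\subset[0,1]$ denote the set of $s$ for which the equation with $F_s$ admits a smooth admissible solution $\varphi_s$ on $[0,T]\times M$ with $\varphi_s(0,\cdot)=\varphi_0$. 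Since $0\in S$, the goal is to show that $S$ is both open and closed, hence equal to $[0,1]$.

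For openness, the linearization of $\mathcal{F}$ at $\varphi_s$, acting on a variation $\psi$ with $\psi|_{t=0}=0$, is
\begin{equation*}
D\mathcal{F}_{\varphi_s}(\psi)=\frac{-\partial_t\psi}{-\partial_t\varphi_s}+g_{\varphi_s}^{i\bar{j}}\psi_{i\bar{j}}.
\end{equation*}
After multiplying by $-\partial_t\varphi_s>0$ this becomes a uniformly parabolic linear operator (non-degenerate because $\omega_{\varphi_s}>0$), and by standard parabolic Schauder theory it is an isomorphism between $C^{2+\alpha,1+\alpha/2}$-functions with zero initial data and $C^{\alpha,\alpha/2}$. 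The implicit function theorem then yields openness of $S$.

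Closedness requires uniform a priori $C^k$ estimates for $\varphi_s$, which I would establish in the following order: (i) a $C^0$ bound on $\varphi_s$ (the upper bound from $\partial_t\varphi_s\le 0$, the lower bound by the $I$-functional argument of Lemma \ref{l2.1}, which uses only an upper bound on $\int e^{F_s}\omega_0^n\,dt$); (ii) two-sided bounds on $\partial_t\varphi_s$, in particular a strictly positive lower bound on $u:=-\partial_t\varphi_s$, obtained by differentiating $\mathcal{F}(\varphi_s)=0$ in $t$ and applying the maximum principle to the resulting equation $\partial_t u-u\,\Delta_{\varphi_s}u=u\,\partial_t F_s$; (iii) an upper bound on $\mathrm{tr}_{\omega_0}\omega_{\varphi_s}$ by a parabolic analogue of Aubin--Yau's second-order calculation, with an auxiliary function of $\varphi_s$ chosen to absorb the lower-order terms; (iv) $C^{2+\alpha,1+\alpha/2}$ regularity via a parabolic Evans--Krylov theorem, valid because $\mathcal{F}$ is concave in $\sqrt{-1}\partial\bar{\partial}\varphi$; and (v) $C^\infty$ regularity by bootstrap, applying linear parabolic Schauder estimates to spatial and temporal derivatives of the equation.

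The principal obstacle will be maintaining uniform parabolicity along the continuity path, i.e., establishing simultaneously the positive lower bound on $-\partial_t\varphi_s$ and the upper bound on $\mathrm{tr}_{\omega_0}\omega_{\varphi_s}$. These two estimates are coupled, since the second-order estimate needs the equation to be non-degenerate, while controlling $-\partial_t\varphi_s$ from below is cleanest when some control on $\omega_{\varphi_s}$ is already available. The delicate point is to close this loop, for instance by combining the maximum principle argument for $u$ with the parabolic Yau-type calculation in a single auxiliary-function argument, a technique familiar from the analysis of K\"ahler--Ricci flow and related parabolic complex Monge--Amp\`ere equations.
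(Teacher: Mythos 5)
Your existence argument follows essentially the same route as the paper: a continuity method whose openness comes from the uniform parabolicity of the linearized operator, and whose closedness rests on (a) two-sided bounds for $\partial_t\varphi$ obtained by differentiating the equation in $t$ and applying the maximum principle, (b) a Yau-type second-order estimate with an auxiliary factor $e^{-C\varphi}$, and (c) regularity theory once the equation is uniformly parabolic (the paper simply invokes bootstrap where you cite parabolic Evans--Krylov, and it deforms the data as $e^{sF}$, $s\varphi_0$ starting from $\varphi\equiv-t$ rather than interpolating $F_s=sF+(1-s)F_0$ with fixed initial value; both paths work). Two remarks on the details of step (a): a bare maximum principle applied to $\partial_t u-u\,\Delta_{\varphi}u=u\,\partial_tF_s$ at a minimum of $u$ gives no information, since at an interior minimum one only learns $\partial_tF_s\le 0$ there; one needs the standard integrating-factor device (the paper uses $\bar v=\partial_t\varphi\, e^{kt}$ with $k$ chosen so that $-k-\partial_tF_s$ has a sign; equivalently one can work with $\log u\pm At$). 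More importantly, the ``principal obstacle'' you flag is not actually there: the bound $-M_1\le\partial_t\varphi\le-\eps_1$ is obtained by a pointwise maximum principle that uses only the admissibility conditions $-\partial_t\varphi>0$, $\omega_{\varphi}>0$ along the path, with no uniform parabolicity and no trace bound; it then bounds $\log(\omega_{\varphi}^n/\omega_0^n)$ from both sides via the equation (since $F$ is bounded), which is precisely the input the second-order estimate needs. The estimates therefore decouple, and no combined auxiliary-function argument is required.

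The one genuine gap is uniqueness, which is part of the statement but absent from your proposal. It is short but should be supplied: given two admissible solutions $\varphi,\bar\varphi$ with the same data, consider $\varphi-\eps t-\bar\varphi$; at an interior maximum point one has $\eps+(-\partial_t\varphi)\le-\partial_t\bar\varphi$ and $\omega_{\varphi}^n\le\omega_{\bar\varphi}^n$, and multiplying these and using the equation gives $\eps\,\omega_{\varphi}^n\le 0$, a contradiction with $\omega_{\varphi}>0$; hence the maximum is attained at $t=0$, where it vanishes, and letting $\eps\to0$ and symmetrizing yields $\varphi=\bar\varphi$. With that added, and the integrating-factor detail in (a) spelled out, your outline matches the paper's proof.
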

\begin{proof}
Uniqueness is quite easy to see,  thanks to the maximum principle.  Indeed,  if there are two such smooth solutions $\varphi$ and $\bar{\varphi}$,  we can consider $\varphi-\eps t-\bar{\varphi}$.  Assuming it has maximum at $(t_0,x_0)$ with $t_0>0$,  then we would have
\begin{equation*}
\partial_t(\varphi-\eps t-\bar{\varphi})|_{(t_0,x_0)}\ge 0,\,\,\sqrt{-1}\partial\bar{\partial}(\varphi-\eps t-\bar{\varphi})|_{(t_0,x_0)}\le 0.
\end{equation*}
This would imply
\begin{equation*}
\eps+(-\partial_t\varphi)\le -\partial_t\bar{\varphi},\,\,\,\omega_{\varphi}^n\le \omega_{\bar{\varphi}}^n.
\end{equation*}
Multiplying,  we get
\begin{equation*}
\eps\omega_{\varphi}^n+(-\partial_t\varphi)\omega_{\varphi}^n\le (-\partial_t\bar{\varphi})\omega_{\bar{\varphi}}^n.
\end{equation*}
This gives $\eps\omega_{\varphi}^n\le 0$,  which contradicts $\omega_{\varphi}>0$.  Hence $\varphi-\bar{\varphi}-\eps t\le (\varphi-\bar{\varphi}-\eps t)|_{t=0}=0$.  Letting $\eps\rightarrow 0$,  we get $\varphi\le \bar{\varphi}$.

It only remains to show existence,  and we can run a continuity method as follows: for $s\in[0,1]$,
\begin{equation}\label{2.2NNew}
\begin{split}
&(-\partial_t\varphi)\omega_{\varphi}^n=e^{sF}\omega_0^n,\\
&\varphi(0,\cdot)=s\varphi_0.
\end{split}
\end{equation}
When $s=0$,  it has a trivial solution $\varphi(t,x)=-t$.

To show openness,  we need to linearize the equation,  and the linearized operator is:
\begin{equation*}
Lu=-\partial_tu+(-\partial_t\varphi)\Delta_{\varphi}u.
\end{equation*}
Since $-\partial_t\varphi>0$,  the operator $L$ is uniformly parabolic.  Hence it would be standard to show the openness of the continuity path.

Now it only remains to show closeness of the continuity path,  for which we have to derive the apriori estimates.

First we would like to derive the equation for $\partial_t\varphi$.  Denote $v=\partial_t\varphi$,  we can differentiate (\ref{2.2NNew}) to get
\begin{equation*}
\begin{split}
&-\partial_tv+s\partial_tFv+(-\partial_t\varphi)\Delta_{\varphi}v=0,\\
&v|_{t=0}=-\frac{e^{sF(0,x)}\omega_0^n}{\omega_{\varphi_0}^n}.
\end{split}
\end{equation*}
Let $k>0$,  we define $\bar{v}=ve^{kt}$,  then in terms of $\bar{v}$,  the equation reads:
\begin{equation}\label{2.3NNew}
\begin{split}
&-\partial_t\bar{v}+(-\partial_t\varphi)\Delta_{\varphi}\bar{v}=\bar{v}(-k-s\partial_tF),\\
&-M_0\le \bar{v}|_{t=0}\le -\eps_0.
\end{split}
\end{equation}
In the above,  if we take $k>0$ such that $-k-s\partial_tF<0$,  then we see that $\bar{v}\le -\frac{1}{2}\eps_0$ on $[0,T]\times M$ by maximum principle.  In particular,  we get $v\le -\eps_1$ on $[0,T]\times M$.

On the other hand,  if $k<0$ such that $-k-s\partial_tF>0$,  then from (\ref{2.3NNew}),  we get that $-\partial_t\bar{v}+(-\partial_t\varphi)\Delta_{\varphi}v\le 0$,  so that $\bar{v}\ge -M_0$.

Hence from the above arguments,  we see that:
\begin{equation}
-M_1\le \partial_t\varphi\le -\eps_1.
\end{equation}
This implies that in particular,  $\varphi$ is uniformly bounded on $[0,T]\times M$.

Next we estimate the second derivative of $\varphi$.  Now we put $u=e^{-C\varphi}(n+\Delta \varphi)$ and denote $e^G=\frac{\omega_{\varphi}^n}{\omega_0^n}$,  then we can compute:
\begin{equation*}
\begin{split}
&-\partial_tu=(-C)(-\partial_t\varphi)u+e^{-C\varphi}\Delta(-\partial_t\varphi)=(-C)(-\partial_t\varphi)u+e^{-C\varphi}\Delta(e^{sF-G})\\
&\ge (-C)(-\partial_t\varphi)u+e^{-C\varphi} e^{sF-G}(s\Delta F-\Delta G)\\
&=(-\partial_t\varphi)(-Cu+e^{-C\varphi}s\Delta F-e^{-C\varphi}\Delta G).
\end{split}
\end{equation*}
On the other hand,  from Yau's calculation in \cite{Yau}:
\begin{equation*}
\Delta_{\varphi}u\ge e^{-C\varphi}(-C)\Delta_{\varphi}\varphi(n+\Delta\varphi)+e^{-C\varphi}\frac{R_{i\bar{i}k\bar{k}}(1+\varphi_{i\bar{i}})}{1+\varphi_{k\bar{k}}}+e^{-C\varphi}\Delta G-e^{-C\varphi}R.
\end{equation*}
In the above calculation,  we took normal coordinates at a point,  and $R_{i\bar{i}k\bar{k}}$ are curvature tensors of $\omega_0$,  $R$ is the scalar curvature of $\omega_0$.  Hence if we take $C$ large enough,  we would get that
\begin{equation*}
\begin{split}
&\Delta_{\varphi}u\ge e^{-C\varphi}tr_{\varphi}\omega_0(n+\Delta\varphi)-e^{-C\varphi}Cn(n+\Delta\varphi)+e^{-C\varphi}\Delta G-e^{-C\varphi}R\\
&=e^{-C\varphi}\big(tr_{\varphi}\omega_0 u-Cnu+\Delta G-R\big).
\end{split}
\end{equation*}
Hence if we define $L=-\partial_t+(-\partial_t\varphi)\Delta_{\varphi}$,  we get that
\begin{equation}\label{2.5N}
\begin{split}
&Lu\ge (-\partial_t\varphi)e^{-C\varphi}\big(-C(n+1)u-R+s\Delta F+tr_{\varphi}\omega_0u\big)\\
&\ge (-\partial_t\varphi)e^{-C\varphi}\big(-C(n+1)u-R+s\Delta F+e^{-\frac{G}{n-1}}(n+\Delta\varphi)^{\frac{1}{n-1}}u\big).
\end{split}
\end{equation}
In the above,  we note that $-\partial_t\varphi=e^{sF-G}$.  Also we have shown a bound for $-\partial_t\varphi$: $-M_1\le \partial_t\varphi\le -\eps_1$,  it follows that $G$ is bounded,  since $F$ is assumed to be smooth hence bounded.
Therefore,  it follows from (\ref{2.5N}) that 
\begin{equation*}
Lu\ge (-\partial_t\varphi)e^{-C\varphi}\big(-C(n+1)u-R+s\Delta F+cu^{\frac{n}{n-1}}\big).
\end{equation*}
Hence if $u$ achieves maximum at $(t_0,x_0)$,  then at $(t_0,x_0)$,  we would get
\begin{equation*}
0\ge -C(n+1)u-R+s\Delta F+cu^{\frac{n}{n-1}}.
\end{equation*}
This implies an upper bound for $u$ at $(t_0,x_0)$.
So we have shown that $\omega_{\varphi}\le C\omega_0$.  That $\omega_{\varphi}\ge \frac{1}{C}\omega_0$ follows from that $\frac{\omega_{\varphi}^n}{\omega_0^n}=e^G$ is bounded from above and below.
Hence the equation becomes uniformly parabolic and higher regularity follows from standard bootstrap.
\end{proof}

\subsection{Estimate the $L^{\infty}$ bound}
The first step is to establish a Moser-Trudinger type inequality,  similar to Lemma 1 in \cite{Phong}:
\begin{prop}\label{p2.3}
Denote $A_s=\int_{[0,T]\times M}(-\varphi-s)^+e^F\omega_0^ndt$.
Then there exists a constant $\beta_0>0$,  depending only on the background metric,  and there exists a constant $C>0$,  depending only on the background metric,  such that for any $s\ge ||\varphi_0||_{L^{\infty}}$
\begin{equation*}
\sup_{t\in[0,T]}\int_Me^{\beta_0A_s^{-\frac{1}{n+2}}((-\varphi-s)^+)^{\frac{n+2}{n+1}}}\omega_0^n\le C\exp(CE).
\end{equation*}
Here
\begin{equation*}
E=\int_{[0,T]\times M}(-\varphi)e^F\omega_0^ndt.
\end{equation*}
\end{prop}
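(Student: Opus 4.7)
The plan is to adapt the auxiliary-function method of Phong et al.\ (\cite{WWZ}, \cite{Phong}) to the parabolic setting. Set $\eta := (-\varphi-s)^+$ and $\tau := A_s^{1/(n+2)}$, so that $\tau^{n+2}\le E$. I will construct an auxiliary $\omega_0$-psh function $\psi\le 0$ and prove, via the parabolic maximum principle, the pointwise comparison
\begin{equation*}
c\,\tau^{-1}\eta^{(n+2)/(n+1)} \;\le\; -\psi + C\,E^{(n+1)/(n+2)} \qquad \text{on } [0,T]\times M,
\end{equation*}
then conclude by exponentiating and invoking Corollary \ref{c2.2} applied to $\psi$. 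The exponent $(n+2)/(n+1)$ is the parabolic analogue of Ko\l odziej's elliptic $(n+1)/n$, reflecting an effective dimension of $n+1$.

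For the construction, I would first replace $\eta$ by a smooth nonnegative approximation $\tilde\eta_\delta$ (obtained by composing $-\varphi-s$ with a smooth convex approximation of $t\mapsto t_+$), then invoke the parabolic existence result of Subsection 2.1 to solve
\begin{equation*}
(-\partial_t\psi)\,\omega_\psi^n \;=\; K\,\tau^{-(n+1)}\bigl(\tilde\eta_\delta+\tau^{n+1}\bigr)\,e^F\omega_0^n,\qquad \psi|_{t=0}=0,
\end{equation*}
where $\omega_\psi := \omega_0+\sqrt{-1}\partial\bar{\partial}\psi$ and $K=K(n)>0$ will be chosen. Then $\psi\le 0$ and $\psi(t,\cdot)$ is $\omega_0$-psh. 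The total integral of the right-hand side equals $K(\tau+\|e^F\|_{L^1([0,T]\times M)})$, which is controlled by $E$, so Lemma \ref{l2.1} and Corollary \ref{c2.2} applied to $\psi$ give $\int_M e^{-\alpha_0\psi(t,\cdot)}\omega_0^n\le Ce^{CE}$ uniformly in $t$.

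The heart of the argument is the comparison step, carried out via the nonlinear test function
\begin{equation*}
\Phi \;:=\; \psi + c\,\tau^{-1}\eta^{(n+2)/(n+1)},\qquad c := \tfrac{n+1}{n+2}\,K^{1/(n+1)}.
\end{equation*}
Since $s\ge\|\varphi_0\|_{L^\infty}$, $\eta(0,\cdot)\equiv 0$, so $\Phi(0,\cdot)\equiv 0$; and $\Phi=\psi\le 0$ where $\eta=0$. At an interior maximum of $\Phi$ with $\eta>0$, set $\mu := c\,\tau^{-1}\tfrac{n+2}{n+1}\eta^{1/(n+1)}$; reading off from $\partial_t\Phi\ge 0$, $\sqrt{-1}\partial\bar{\partial}\Phi\le 0$, the identities $\partial_t\eta=-\partial_t\varphi$ and $\sqrt{-1}\partial\bar{\partial}\eta=\omega_0-\omega_\varphi$, and discarding the nonnegative gradient term arising from $\sqrt{-1}\partial\eta\wedge\bar{\partial}\eta$, one obtains
\begin{equation*}
-\partial_t\psi\le \mu(-\partial_t\varphi) \AND \omega_\psi+(\mu-1)\omega_0\le\mu\omega_\varphi.
\end{equation*}
When $\mu\ge 1$, multiplying these two inequalities yields $(-\partial_t\psi)\omega_\psi^n\le\mu^{n+1}e^F\omega_0^n = K\tau^{-(n+1)}\eta\,e^F\omega_0^n$, which contradicts the equation for $\psi$ because $\tilde\eta_\delta+\tau^{n+1}>\eta$ for $\delta$ small. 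Hence any interior max with $\eta>0$ must lie in the regime $\mu<1$, in which $\eta\le C\tau^{n+1}$ automatically forces $\Phi\le C\tau^{n+1}\le CE^{(n+1)/(n+2)}$. Combining the cases, $\Phi\le CE^{(n+1)/(n+2)}$ everywhere on $[0,T]\times M$.

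Exponentiating with $\beta_0:=c\alpha_0$, integrating in space, and applying the bound on $\int_M e^{-\alpha_0\psi}\omega_0^n$, one then has
\begin{equation*}
\sup_{t\in[0,T]}\int_M e^{\beta_0\tau^{-1}\eta^{(n+2)/(n+1)}}\omega_0^n \;\le\; e^{\alpha_0 CE^{(n+1)/(n+2)}}\cdot Ce^{CE} \;\le\; Ce^{CE},
\end{equation*}
using $E^{(n+1)/(n+2)}\le E+1$; letting $\delta\to 0$ concludes. The hard part will be the comparison step: designing the nonlinear test function so that the parabolic maximum principle yields the sharp exponent $(n+2)/(n+1)$, and separately handling the regime $\mu<1$ where the principle does not close directly—this last issue is absorbed by the observation that $\eta\le C\tau^{n+1}$ there, together with $\tau^{n+1}\le E^{(n+1)/(n+2)}$.
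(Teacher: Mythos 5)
Your proposal is in essence the same auxiliary-function argument as the paper's: solve a parabolic complex Monge--Amp\`ere equation whose density is (a smooth positive approximation of) $(-\varphi-s)^+e^F$ suitably normalized, run the parabolic maximum principle on a nonlinear comparison function coupling $(-\varphi-s)^+$ with $-\psi$, and finish with the uniform bound on $\int_M e^{-\alpha_0\psi}\omega_0^n$ from Corollary \ref{c2.2}. The only structural difference is cosmetic: the paper takes $\Phi=\eps(-\varphi-s)-(-\psi+\Lambda)^{\frac{n+1}{n+2}}$ with $\Lambda\sim A_s$, whereas you put the power $\frac{n+2}{n+1}$ on $(-\varphi-s)^+$ and absorb the regime where the maximum principle does not close ($\mu<1$) by the observation $\eta\le C\tau^{n+1}$; these are equivalent arrangements of the same computation, and your case analysis at the maximum (including discarding the nonnegative gradient term and multiplying the two inequalities only when $\mu\ge1$) is sound.

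One concrete inaccuracy: you claim the total mass of your right-hand side, $K\bigl(\tau^{-(n+1)}A_{s,\delta}+\|e^F\|_{L^1([0,T]\times M)}\bigr)\approx K(\tau+\|e^F\|_{L^1})$, ``is controlled by $E$.'' The term $\tau\le E^{\frac{1}{n+2}}$ is, but $\|e^F\|_{L^1}$ is not controlled by $E=\int(-\varphi)e^F\omega_0^n dt$ together with the background metric (since $\varphi$ need not be bounded below away from $0$ by anything intrinsic); Corollary \ref{c2.2} applied to $\psi$ then yields $\int_M e^{-\alpha_0\psi}\omega_0^n\le Ce^{C(\tau+\|e^F\|_{L^1})}$, so as written your constants acquire a dependence on $\|e^F\|_{L^1}$ that the Proposition does not allow ($C$ depending only on the background metric, bound $C\exp(CE)$). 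The fix is immediate and brings you exactly to the paper's normalization: drop the extraneous $+\tau^{n+1}$ in the density. Your strictly positive approximation $\tilde\eta_\delta>(-\varphi-s)^+$ (e.g. $\tilde\eta_\delta=\frac12\bigl(x+\sqrt{x^2+\delta}\bigr)$ composed with $-\varphi-s$) already makes the right-hand side smooth and positive, the strict inequality needed for the contradiction at the maximum still holds, and the mass becomes $K\tau^{-(n+1)}A_{s,\delta}\approx K\tau\le KE^{\frac{1}{n+2}}$, which gives the stated $C\exp(CE)$ (indeed the paper simply normalizes the mass to $1$). With that adjustment the proof is correct.
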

To prove this,  we use the solution to an auxiliary problem.  Let $\eta_j:\mathbb{R}\rightarrow \mathbb{R}_+$ such that $\eta_j(x)\rightarrow \max(x,0)$ as $j\rightarrow \infty$ and $\eta_j(x)>x$ for $x>0$ (for example,  we could take $\eta_j(x)=\frac{1}{2}(x+\sqrt{x^2+j^{-1}})$.  We let $\psi_j$ be the solution to the following problem:
\begin{equation}\label{2.1NN}
\begin{split}
&(-\partial_t\psi_j)\omega_{\psi_j}^n=\frac{\eta_j(-\varphi-s)e^F\omega_0^n}{A_{j,s}},\\
&\psi_j|_{t=0}=0.
\end{split}
\end{equation}
Here 
\begin{equation*}
A_{j,s}=\int_{[0,T]\times M}\eta_j(-\varphi-s)e^F\omega_0^ndt.
\end{equation*}
The existence of such a $\psi_j$ is guaranteed by Proposition \ref{p2.3}.  
The above proposition will follow immediately once we prove:
\begin{lem}\label{l2.4}
There exists dimensional constant $c_n$ and $C_n$ such that
\begin{equation*}
c_nA_{j,s}^{-\frac{1}{n+1}}(-\varphi-s)^{\frac{n+2}{n+1}}\le -\psi_j+C_nA_{j,s}.
\end{equation*}
\end{lem}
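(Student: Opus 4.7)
The plan is to apply the parabolic maximum principle to a carefully chosen auxiliary function. Abbreviating $A := A_{j,s}$, I would introduce constants $\eps > 0$ and $\Lambda > 0$ (to be fixed below) and set
\[
\Phi(t,x) = -\varphi(t,x)-s - \eps\, A^{1/(n+2)}\bigl(-\psi_j(t,x)+\Lambda\bigr)^{(n+1)/(n+2)}.
\]
Since $\psi_j(0,\cdot)=0$ and $\partial_t\psi_j\le 0$ (from admissibility of $\psi_j$), one has $-\psi_j\ge 0$, so $\Phi$ is smooth. At $t=0$, the hypothesis $s\ge\|\varphi_0\|_{L^\infty}$ gives $-\varphi_0-s\le 0$, hence $\Phi(0,\cdot)\le 0$. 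The aim is to show $\Phi\le 0$ on $[0,T]\times M$; raising $-\varphi-s\le \eps A^{1/(n+2)}(-\psi_j+\Lambda)^{(n+1)/(n+2)}$ to the power $(n+2)/(n+1)$ and taking $\Lambda$ proportional to $A$ then yields the lemma.

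Arguing by contradiction, I would suppose $\Phi$ attains a positive maximum at some $(t_0,x_0)$ with $t_0>0$. At this point $\partial_t\Phi\ge 0$, which rearranges to
\[
-\partial_t\varphi \ge \alpha\,(-\partial_t\psi_j), \qquad \alpha := \eps\,\tfrac{n+1}{n+2}\,A^{1/(n+2)}\,(-\psi_j+\Lambda)^{-1/(n+2)}.
\]
For the spatial part $\sqrt{-1}\partial\bar\partial\Phi\le 0$: when expanding $\sqrt{-1}\partial\bar\partial[(-\psi_j+\Lambda)^{(n+1)/(n+2)}]$, the quadratic term in $\sqrt{-1}\partial\psi_j\wedge\bar\partial\psi_j$ carries coefficient $\tfrac{n+1}{n+2}(\tfrac{n+1}{n+2}-1)<0$ and thus contributes to $\sqrt{-1}\partial\bar\partial\Phi$ with the favorable sign, so it can be discarded. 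Converting $\sqrt{-1}\partial\bar\partial\varphi = \omega_\varphi-\omega_0$ and similarly for $\psi_j$ then gives
\[
\omega_\varphi \ge (1-\alpha)\,\omega_0 + \alpha\,\omega_{\psi_j}.
\]

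Next I would choose $\Lambda$ a fixed dimensional multiple of $A$, large enough (using $-\psi_j+\Lambda\ge \Lambda$) to guarantee $\alpha\le 1$ throughout $[0,T]\times M$; then the right-hand side is a convex combination of positive $(1,1)$-forms, and Minkowski's determinant inequality yields $\omega_\varphi^n\ge \alpha^n\omega_{\psi_j}^n$. Multiplying by the time inequality and substituting both (\ref{2.1NNN}) and (\ref{2.1NN}), the factor $e^F\omega_0^n$ cancels and one obtains
\[
A \ge \alpha^{n+1}\eta_j(-\varphi-s) \ge \alpha^{n+1}(-\varphi-s),
\]
using $\eta_j(x)>x$ for $x>0$ at a point where $-\varphi-s>0$. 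Unpacking $\alpha^{n+1}$ this reads $(-\psi_j+\Lambda)^{(n+1)/(n+2)}\ge \eps^{n+1}\bigl(\tfrac{n+1}{n+2}\bigr)^{n+1}A^{-1/(n+2)}(-\varphi-s)$. Substituting the lower bound $(-\varphi-s)>\eps A^{1/(n+2)}(-\psi_j+\Lambda)^{(n+1)/(n+2)}$ implied by $\Phi(t_0,x_0)>0$ produces the numerical inequality $1>\eps^{n+2}\bigl(\tfrac{n+1}{n+2}\bigr)^{n+1}$, a contradiction once $\eps$ exceeds the dimensional constant $\bigl(\tfrac{n+2}{n+1}\bigr)^{(n+1)/(n+2)}$. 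Fixing such $\eps$ forces $\Phi\le 0$, and the claim follows with $c_n = \eps^{-(n+2)/(n+1)}$ and $C_n$ the proportionality ratio $\Lambda/A$.

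The main obstacle is the calibration of exponents: the choice $(n+1)/(n+2)$ on $(-\psi_j+\Lambda)$ is dictated by having Minkowski's inequality (which supplies $\alpha^n$ from $\omega^n$) pair with the additional factor $\alpha$ coming from the parabolic time derivative to produce $\alpha^{n+1}$ that then recombines with the power of $A$ into a purely dimensional inequality. This is also what forces $\Lambda\sim A$ rather than a fixed constant, and shifts the classical elliptic exponent $(n+1)/n$ of Ko\l{}odziej/Phong to the parabolic value $(n+2)/(n+1)$.
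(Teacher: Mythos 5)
Your argument is correct, and it proves the lemma with the same basic strategy as the paper: the same auxiliary flow $\psi_j$, the same comparison function (up to rescaling, $\eps(-\varphi-s)-(-\psi_j+\Lambda)^{\frac{n+1}{n+2}}$ with $\Lambda$ a dimensional multiple of $A_{j,s}$), and a maximum-principle contradiction at a positive maximum with $t_0>0$. Where you diverge is in the mechanics at the maximum point. The paper linearizes: it computes $L\Phi$ for $L=-\partial_t+(-\partial_t\varphi)\Delta_{\varphi}$, bounds $tr_{\omega_\varphi}\omega_{\psi_j}$ from below by $n\big(\omega_{\psi_j}^n/\omega_\varphi^n\big)^{1/n}$, and then needs Young's inequality to recombine $-\partial_t\psi_j$ and $(-\partial_t\varphi)^{1+\frac1n}(-\partial_t\psi_j)^{-\frac1n}$ into the single term $(-\partial_t\varphi)$, after which $L\Phi>0$ on $\{\Phi>0\}$ contradicts $L\Phi\le 0$ at the max. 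You instead use only the zeroth-order consequences of maximality, $\partial_t\Phi\ge 0$ and $\sqrt{-1}\partial\bar\partial\Phi\le 0$, discard the (correctly signed) gradient-squared term, deduce $-\partial_t\varphi\ge\alpha(-\partial_t\psi_j)$ and $\omega_\varphi\ge(1-\alpha)\omega_0+\alpha\,\omega_{\psi_j}$, and multiply the two equations directly -- a Krylov--Tso/comparison-principle style argument (the same mechanism as the uniqueness part of Proposition 2.3 in the paper) that avoids the trace inequality and Young's inequality altogether. The price is the extra constraint $\alpha\le 1$, which is what forces your $\Lambda/A_{j,s}$ to be large depending on $\eps$; the paper's choice of $\Lambda\sim A_{j,s}$ plays the analogous role of absorbing the $tr_{\varphi}\omega_0$ terms. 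One small point to make explicit: the final inequality is only meaningful (and only needed) where $-\varphi-s\ge 0$; elsewhere the right-hand side $-\psi_j+C_nA_{j,s}$ is nonnegative, consistent with the positive part $(-\varphi-s)^+$ used in Proposition \ref{p2.3}.
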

\begin{proof}
Define the operator $L$ to be:
\begin{equation*}
Lu=-\partial_tu+(-\partial_t\varphi)\Delta_{\varphi}u.
\end{equation*}
Let $\eps=(\frac{1}{2}n^{\frac{n}{n+1}}\frac{n+1}{n+2})^{\frac{n+1}{n+2}}A_s^{-\frac{1}{n+2}}$,  $\Lambda=(\frac{1}{2}n^{\frac{n}{n+1}})^{-(n+1)}\frac{n+1}{n+2}A_s$,  we define $\Phi_j=\eps(-\varphi-s)-(-\psi_j+\Lambda)^{\frac{n+1}{n+2}}$.
Then we may compute (we suppress the subscript $j$ for the convenience of notations):
\begin{equation*}
\begin{split}
&L\Phi=-\partial_t\Phi+(-\partial_t\varphi)\Delta_{\varphi}\Phi=-\eps(-\partial_t\varphi)+\frac{n+1}{n+2}(-\psi+\Lambda)^{-\frac{1}{n+2}}(-\partial_t\psi)\\
&+(-\partial_t\varphi)\big(\eps(-\Delta_{\varphi}\varphi)+\frac{n+1}{n+2}(-\psi+\Lambda)^{-\frac{1}{n+2}}\Delta_{\varphi}\psi+\frac{n+1}{(n+2)^2}(-\psi+\Lambda)^{-\frac{n+3}{n+2}}|\nabla_{\varphi}\psi|^2\big)\\
&\ge -\eps(-\partial_t\varphi)+\frac{n+1}{n+2}(-\psi+\Lambda)^{-\frac{1}{n+2}}(-\partial_t\psi)+(-\partial_t\varphi)\big(\eps tr_{\varphi}g-\eps n\\
&+\frac{n+1}{n+2}(-\psi+\Lambda)^{-\frac{1}{n+2}}tr_{\omega_{\varphi}}\omega_{\psi}-\frac{n+1}{n+2}(-\psi+\Lambda)^{-\frac{1}{n+2}}tr_{\varphi}g\big).
\end{split}
\end{equation*}
Because of our choice of $\eps$ and $\Lambda$,  we have that 
\begin{equation*}
\eps \ge \frac{n+1}{n+2}\Lambda^{-\frac{1}{n+2}}\ge \frac{n+1}{n+2}(-\psi+\Lambda)^{-\frac{1}{n+2}}.
\end{equation*}
Moreover,  we use arithmetic-geometric inequality to obtain:
\begin{equation*}
tr_{\omega_{\varphi}}\omega_{\psi}\ge n\big(\frac{\omega_{\psi}^n}{\omega_{\varphi}^n}\big)^{\frac{1}{n}}=n\big(\frac{-\partial_t\varphi}{-\partial_t\psi}\frac{\eta_j(-\varphi-s)}{A_{s}}\big)^{\frac{1}{n}}.
\end{equation*}
Therefore
\begin{equation*}
\begin{split}
&L\Phi\ge -\eps(-\partial_t\varphi)+\frac{n+1}{n+2}(-\psi+\Lambda)^{-\frac{1}{n+2}}(-\partial_t\psi)\\
&+n\frac{n+1}{n+2}(-\psi+\Lambda)^{-\frac{1}{n+2}}\frac{\eta_j^{\frac{1}{n}}(-\varphi-s)}{A_s^{\frac{1}{n}}}\frac{(-\partial_t\varphi)^{1+\frac{1}{n}}}{(-\partial_t\psi)^{\frac{1}{n}}}.
\end{split}
\end{equation*}
Now we wish to use the following inequality:
\begin{equation*}
(BA^{\frac{1}{n}})^{\frac{n}{n+1}}x\le Ay+B\frac{x^{1+\frac{1}{n}}}{y^{\frac{1}{n}}}.
\end{equation*}
This essentially follows from Young's inequality,  by writing $x=\frac{x}{y^{\frac{1}{n+1}}}\cdot y^{\frac{1}{n+1}}$,  with exponents $n+1$,  $\frac{n+1}{n}$.
Now we use the above inequality with $y=-\partial_t\psi$,  $x=-\partial_t\varphi$,  $A=\frac{n+1}{n+2}(-\psi+\Lambda)^{-\frac{1}{n+2}}$,  $B=n\frac{n+1}{n+2}(-\psi+\Lambda)^{-\frac{1}{n+2}}\eta_j^{\frac{1}{n}}(-\varphi-s).$
Hence we get that
\begin{equation*}
\begin{split}
&\frac{n+1}{n+2}(-\psi+\Lambda)^{-\frac{1}{n+2}}(-\partial_t\psi)+n\frac{n+1}{n+2}(-\psi+\Lambda)^{-\frac{1}{n+2}}\frac{\eta_j^{\frac{1}{n}}(-\varphi-s)}{A_s^{\frac{1}{n}}}\frac{(-\partial_t\varphi)^{1+\frac{1}{n}}}{(-\partial_t\psi)^{\frac{1}{n}}}\\
&\ge n^{\frac{n}{n+1}}\frac{n+1}{n+2}(-\psi+\Lambda)^{-\frac{1}{n+2}}\frac{\eta_j^{\frac{1}{n+1}}(-\varphi-s)}{A_s^{\frac{1}{n+1}}}(-\partial_t\varphi).
\end{split}
\end{equation*}
Therefore we obtain that
\begin{equation}\label{2.1}
L\Phi\ge (-\partial_t\varphi)(-\eps+n^{\frac{n}{n+1}}\frac{n+1}{n+2}(-\psi+\Lambda)^{-\frac{1}{n+2}}\eta_j^{\frac{1}{n+1}}(-\varphi-s)A_s^{-\frac{1}{n+1}}).
\end{equation}
Assume that $\Phi$ achieves positive maximum at $(t_0,x_0)$.  Since we have chosen $s\ge ||\varphi_0||_{L^{\infty}}$,  we have $t_0>0$,  so that $L\Phi(t_0,x_0)\le 0$,  and $\Phi(t_0,x_0)>0$.  In particular,  at $(t_0,x_0)$,  we have
\begin{equation*}
\eps^{\frac{1}{n+1}}\eta_j^{\frac{1}{n+1}}(-\varphi-s)(-\psi+\Lambda)^{\frac{1}{n+2}}\ge \eps^{\frac{1}{n+1}}(-\varphi-s)^{\frac{1}{n+1}}(-\psi+\Lambda)^{-\frac{1}{n+2}}>1.
\end{equation*}
So that from (\ref{2.1}),  we get
\begin{equation*}
0\ge L\Phi(t_0,x_0)\ge (-\partial_t\varphi)(-\eps+n^{\frac{n}{n+1}}\frac{n+1}{n+2}\eps^{-\frac{1}{n+1}}A_s^{-\frac{1}{n+1}})>0.
\end{equation*}
The last $>0$ follows from our choice of $\eps$.  So that we get a contradiction and $\Phi\le 0$ on $[0,T]\times M$.  
\end{proof}

Using the above lemma,  the Proposition \ref{p2.3} follows immediately.  
\begin{proof}
Let $\alpha_0$ be as given by Corollary \ref{c2.2}.  Then we have:
\begin{equation*}
\int_M\exp\big(\alpha_0c_nA_{j,s}^{-\frac{1}{n+1}}((-\varphi-s)^+)^{\frac{n+2}{n+1}}\omega_0^n\le \int_Me^{-\alpha_0\psi_j}e^{C_nA_{j,s}}\omega_0^n.
\end{equation*}
The right hand side of the equation (\ref{2.1NN}) is given by
$\frac{\eta_j(-\varphi-s)e^F\omega_0^n}{A_{j,s}}$,  which has integral $=1$ from the definition of $A_{j,s}$.  Also the initial value of $\psi_j$ is zero.  Hence we are in a position to apply Corollary \ref{c2.2} to conclude that
\begin{equation*}
\sup_{t\in[0,T]}\int_Me^{-\alpha_0\psi_j}\omega_0^n\le C.
\end{equation*}
Here $C$ depends only on the background metric.
Now we pass to limit as $j\rightarrow \infty$.  It is easy to see that $A_{j,s}\rightarrow A_s$.
Take sup in $t\in[0,T]$,  the result immediately follows,  since $A_s\le E$.
\end{proof}
In our situation,  we have automatically have an upper bound for $E$ above. 
Indeed,  we use the inequality $xy\le x\log(x)+e^{y-1}$ for $x>0,\,y>0$,  we have
\begin{equation*}
\int_M(-\varphi)e^F\omega_0^n=\int_M(-\alpha_0\varphi)\frac{1}{\alpha_0}e^F\omega_0^n\le \int_Me^{-\alpha_0\varphi}\omega_0^n+\int_M\log(\frac{1}{\alpha_0}e^F)\frac{1}{\alpha_0}e^F\omega_0^n.
\end{equation*}
So that
\begin{equation*}
\int_{[0,T]\times M}(-\varphi)e^F\omega_0^ndt\le T\sup_{t\in[0,T]}\int_Me^{-\alpha_0\varphi}\omega_0^n+\int_{[0,T]\times M}\log(\frac{1}{\alpha_0}e^F)\frac{1}{\alpha_0}e^F\omega_0^n.
\end{equation*}
Therefore,  we have:
\begin{prop}\label{p2.5}
\begin{equation*}
\sup_{t\in[0,T]}\int_Me^{\beta_0 A_s^{-\frac{1}{n+2}}((-\varphi-s)^+)^{\frac{n+2}{n+1}}}\omega_0^n\le C.
\end{equation*}
Here $C$ depends on the background metric,  $T$,  an upper bound for $||\varphi_0||_{L^{\infty}}$ as well as entropy,  defined as
\begin{equation*}
Ent(F)=\int_{[0,T]\times M}(F^2+1)^{\frac{1}{2}}e^F\omega_0^ndt.
\end{equation*}
\end{prop}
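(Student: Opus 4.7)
The plan is to reduce Proposition 2.5 to Proposition 2.3 by deriving an upper bound for $E=\int_{[0,T]\times M}(-\varphi)e^F\omega_0^ndt$ in terms of the entropy $Ent(F)$ and the data. Since Proposition 2.3 already gives
\begin{equation*}
\sup_{t\in[0,T]}\int_Me^{\beta_0A_s^{-\frac{1}{n+2}}((-\varphi-s)^+)^{\frac{n+2}{n+1}}}\omega_0^n\le C\exp(CE),
\end{equation*}
it suffices to show that $E\le C(1+Ent(F))$ with $C$ depending only on the background metric, $T$, and $\|\varphi_0\|_{L^\infty}$.

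The key mechanism is the Young-type inequality $xy\le x\log x+e^{y-1}$ for $x,y>0$, which is precisely the Orlicz duality between $L\log L$ and the exponential class. I would apply this with $x=\alpha_0^{-1}e^F$ and $y=-\alpha_0\varphi$ (noting $\varphi\le \|\varphi_0\|_{L^\infty}$ may force a harmless shift, which only changes constants), to obtain pointwise
\begin{equation*}
(-\varphi)e^F\le e^{-\alpha_0\varphi}+\alpha_0^{-1}\bigl(F-\log\alpha_0\bigr)e^F,
\end{equation*}
where $\alpha_0>0$ is the uniform exponent from Corollary 2.2. Integrating in $(t,x)$ over $[0,T]\times M$ and invoking Corollary 2.2 to control the first piece by $T\cdot C$, this gives
\begin{equation*}
E\le CT+\alpha_0^{-1}\int_{[0,T]\times M}\bigl(F-\log\alpha_0\bigr)e^F\omega_0^ndt.
\end{equation*}

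Finally, the remaining integral is dominated by the stated entropy: using $|F|\le (F^2+1)^{1/2}$ and the trivial bound $1\le (F^2+1)^{1/2}$, both $\int Fe^F\omega_0^ndt$ and $\int e^F\omega_0^ndt$ are at most $Ent(F)$, so the second piece is bounded by $C(1+Ent(F))$ with $C$ depending only on $\alpha_0$. Combining yields $E\le C(1+Ent(F))$, and feeding this into Proposition 2.3 produces the claimed bound with $C$ depending only on the background metric, $T$, $\|\varphi_0\|_{L^\infty}$, and $Ent(F)$. No step presents a real obstacle here; the only delicate point is the consistent choice of $\alpha_0$ so that Corollary 2.2 can be applied in conjunction with the Young inequality, but this is exactly how $\alpha_0$ was produced in the first place.
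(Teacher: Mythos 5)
Your proposal is correct and is essentially identical to the paper's argument: the paper also reduces Proposition \ref{p2.5} to Proposition \ref{p2.3} by bounding $E$ via the inequality $xy\le x\log x+e^{y-1}$ with $x=\frac{1}{\alpha_0}e^F$, $y=-\alpha_0\varphi$, controlling $\int_M e^{-\alpha_0\varphi}\omega_0^n$ through Corollary \ref{c2.2} and the remaining term by $Ent(F)$. No substantive difference in approach.
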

Using the above estimate,  we would like to show that
\begin{lem}\label{l2.7New}
Let $\varphi$ be the solution to (\ref{2.1NNN}),  such that for some $p>n+1$,
\begin{equation*}
Ent_p(F):=\int_{[0,T]\times M}(F^2+1)^{\frac{p}{2}}e^F\omega_0^ndt<\infty.
\end{equation*}
Define $\phi(s)=\int_{\varphi<-s}e^F\omega_0^ndt$.  Then for some $B_0>0$,  and $\delta=\frac{1}{n+1}-\frac{1}{p}$,  we have
\begin{equation*}
r\phi(s+r)\le B_0\phi(s)^{1+\delta},
\end{equation*}
for any $r>0$,  $s\ge ||\varphi_0||_{L^{\infty}}$.
\end{lem}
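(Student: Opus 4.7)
The plan is to reduce the claim to the integral bound $A_s\le B_0\phi(s)^{1+\delta}$, where $A_s=\int_{[0,T]\times M}(-\varphi-s)^+e^F\omega_0^n dt$ is the quantity from Proposition \ref{p2.5}. The reduction is immediate: since $(-\varphi-s)^+\ge r$ on $\{\varphi<-s-r\}$, integrating against $e^F\omega_0^n dt$ on this set gives $r\phi(s+r)\le A_s$.

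Write $u=(-\varphi-s)^+$, which is supported on $\{\varphi<-s\}$. H\"older's inequality with respect to the measure $d\mu=e^F\omega_0^n dt$ and exponents $(q,q/(q-1))$ yields
\begin{equation*}
A_s\le\bigg(\int u^q e^F\omega_0^n dt\bigg)^{1/q}\phi(s)^{1-1/q}.
\end{equation*}
The task thus reduces to bounding $\int u^q e^F\omega_0^n dt$ by a sub-linear power of $A_s$, so that the inequality can be solved for $A_s$.

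For this key step, I would use an Orlicz--H\"older inequality with the complementary Young pair $\Phi(x)\sim x(\log(e+x))^p$ and $\Psi(y)\sim\exp(y^{1/p})$. The hypothesis $Ent_p(F)<\infty$ immediately bounds $\|e^F\|_\Phi$ by a constant. For the other factor, the choice $q=p(n+2)/(n+1)$ makes the exponent in $\Psi(u^q/\lambda)\sim\exp(u^{(n+2)/(n+1)}/\lambda^{1/p})$ match the Moser--Trudinger exponent in Proposition \ref{p2.5} (which, via Lemma \ref{l2.4} together with Corollary \ref{c2.2}, carries an $A_s^{-1/(n+1)}$ scaling). This forces $\lambda\ge CA_s^{p/(n+1)}$, hence $\|u^q\|_\Psi\le CA_s^{p/(n+1)}$, and the Orlicz--H\"older inequality yields $\int u^q e^F\omega_0^n dt\le CA_s^{p/(n+1)}$.

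Substituting back and using that $p/(q(n+1))=1/(n+2)$ and $1-1/q=(p(n+2)-(n+1))/(p(n+2))$ when $q=p(n+2)/(n+1)$, we obtain
\begin{equation*}
A_s\le C\,A_s^{1/(n+2)}\,\phi(s)^{(p(n+2)-(n+1))/(p(n+2))},
\end{equation*}
which rearranges to $A_s\le C'\phi(s)^{(p(n+2)-(n+1))/(p(n+1))}=C'\phi(s)^{1+\delta}$ with $\delta=\frac{1}{n+1}-\frac{1}{p}$. Combined with the elementary bound $r\phi(s+r)\le A_s$, this yields the lemma. The principal obstacle is the Orlicz--H\"older step; once $q$ is fixed by matching the two exponents, the rest is routine algebra.
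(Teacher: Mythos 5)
Your proposal is correct and follows essentially the same route as the paper: reduce to $r\phi(s+r)\le A_s$, apply H\"older with exponent $q=\tfrac{p(n+2)}{n+1}$ against the measure $e^F\omega_0^n\,dt$, and bound $\int ((-\varphi-s)^+)^{q}e^F\omega_0^n\,dt\le CA_s^{p/(n+1)}$ by combining the Moser--Trudinger estimate with the entropy hypothesis. The only cosmetic difference is that where you invoke the abstract $L\log^p L$--$\exp L^{1/p}$ Orlicz--H\"older duality, the paper carries out that duality by hand through the elementary pointwise inequality of Lemma \ref{l2.7} (namely $x^pe^y\le e^y(1+y)^p+C(p)e^{2x}$), which it then integrates against the entropy and the Moser--Trudinger bound.
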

\begin{proof}
First we note that:
\begin{equation}\label{2.4N}
\begin{split}
&(\frac{\beta_0}{2})^pA_s^{-\frac{p}{n+1}}((-\varphi-s)^+)^{p\frac{n+2}{n+1}}(e^F+1)\le (1+e^F)(1+\log(1+e^F))^p\\
&+C(p)e^{\beta_0A_s^{-\frac{1}{n+1}}((-\varphi-s)^+)^{\frac{n+2}{n+1}}}.
\end{split}
\end{equation}
In the above,  we used the following Lemma \ref{l2.7} with $x=\frac{\beta_0}{2}A_s^{-\frac{1}{n+1}}((-\varphi-s)^+)^{\frac{n+2}{n+1}}$,  $y=\log(e^F+1)$.
Integrate both sides of (\ref{2.4N}),  we obtain that
\begin{equation}\label{2.5N}
\int_{[0,T]\times M}((-\varphi-s)^+)^{p\frac{n+2}{n+1}}e^F\omega_0^ndt\le CA_s^{\frac{p}{n+1}}.
\end{equation}
The constant $C$ above depends on $T$ and an upper bound for $Ent_p(F)$.
On the other hand
\begin{equation*}
\begin{split}
&A_s=\int_{[0,T]\times M}(-\varphi-s)^+e^F\omega_0^ndt\le \big(\int_{[0,T]\times M}((-\varphi-s)^+)^{p\frac{n+2}{n+1}}e^F\omega_0^n\big)^{\frac{n+1}{p(n+2)}}\big(\int_{\varphi<-s}e^F\omega_0^n\big)^{1-\frac{n+1}{p(n+2)}}\\
&\le A_s^{\frac{1}{n+2}}C^{\frac{n+1}{p(n+2)}}\big(\int_{\varphi<-s}e^F\omega_0^ndt\big)^{1-\frac{n+1}{p(n+2)}}.
\end{split}
\end{equation*}
Here the $C$ is the same constant $C$ on the right hand side of (\ref{2.5N}).
Therefore
\begin{equation*}
A_s\le C^{\frac{1}{p}}\big(\int_{\varphi<-s}e^F\omega_0^ndt\big)^{\frac{n+2}{n+1}-\frac{1}{p}}.
\end{equation*}
On the other hand,  
\begin{equation*}
A_s\ge r\int_{\varphi<-s-r}e^F\omega_0^ndt.
\end{equation*}
Putting $B_0=C^{\frac{1}{p}}$,  the result follows.
\end{proof}
The following is the elementary lemma used in the above proof.
\begin{lem}\label{l2.7}
Let $x>0$,  $y>0$,  and $p>1$,  we then have:
\begin{equation*}
x^pe^y\le e^y(1+y)^p+C(p)e^{2x}.
\end{equation*}
\end{lem}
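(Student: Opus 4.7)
The plan is to prove the elementary inequality by a straightforward case split based on the relative size of $x$ and $1+y$. The inequality to show is $x^p e^y \le e^y(1+y)^p + C(p)e^{2x}$, and the two terms on the right are each designed to dominate in a different regime.

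First I would treat the case $x \le 1+y$. In this range, monotonicity of $t \mapsto t^p$ on $[0,\infty)$ gives $x^p \le (1+y)^p$, and multiplying by $e^y$ yields $x^p e^y \le e^y(1+y)^p$. This handles Case~1 without needing the second term at all.

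Next I would treat the complementary case $x > 1+y$, so in particular $y < x-1 < x$. The key elementary fact is that for any fixed $p > 1$, the function $x \mapsto x^p e^{-x}$ is bounded on $[0,\infty)$ (it attains its maximum $(p/e)^p$ at $x=p$), so there exists $C(p) > 0$ with $x^p \le C(p) e^x$ for all $x \ge 0$. Multiplying by $e^y$ and using $y < x$ then gives
\begin{equation*}
x^p e^y \le C(p) e^{x+y} \le C(p) e^{2x},
\end{equation*}
which is absorbed by the second term on the right. Combining the two cases yields the claim.

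There is no real obstacle here: the proof is a one‑line case distinction, and the only ingredient beyond monotonicity is the standard bound $x^p \le C(p) e^x$. The constant $C(p)$ may be taken as $(p/e)^p$, and it is clearly independent of $x,y$, which is all that is needed for the application in the proof of Lemma~\ref{l2.7New}.
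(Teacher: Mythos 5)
Your proof is correct, and it takes a genuinely more elementary route than the paper's. The paper fixes $x$ and maximizes $h(y)=x^pe^y-e^y(1+y)^p$ over $y$ by calculus: at the critical point $y_0$ one has $x^p=(1+y_0)^p+p(1+y_0)^{p-1}$, whence $h(y)\le pe^{y_0}(1+y_0)^{p-1}$ and, since $x^p\ge(1+y_0)^p$ forces $y_0\le x-1$, the bound $h(y)\le pe^{x-1}x^p\le C(p)e^{2x}$. You instead split into the regimes $x\le 1+y$ and $x>1+y$: in the first, monotonicity of $t\mapsto t^p$ gives $x^pe^y\le e^y(1+y)^p$ outright; in the second, $y<x$ together with the standard bound $x^p\le (p/e)^pe^x$ gives $x^pe^y\le C(p)e^{2x}$. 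Both arguments ultimately hinge on the same fact that $x^p$ is dominated by a constant times $e^x$, but your case distinction avoids the critical-point analysis entirely and makes the role of the two terms on the right-hand side transparent, with the explicit constant $C(p)=(p/e)^p$; the paper's optimization in $y$ is slightly longer but identifies exactly where the transition between the two terms occurs. Either way the constant depends only on $p$, which is all that is used in the proof of Lemma \ref{l2.7New}.
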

\begin{proof}
Define $h(y)=x^pe^y-e^y(1+y)^p.$ Then
\begin{equation*}
h'(y)=x^pe^y-e^y(1+y)^p-e^yp(1+y)^{p-1}.
\end{equation*}
Let $y_0$ be such that $h'(y_0)=0$,  in other words,  
\begin{equation*}
x^p=(1+y_0)^p+p(1+y_0)^{p-1}.
\end{equation*}
Therefore,  $h(y)\le h(y_0)=e^{y_0}(x^p-(1+y_0)^p)=pe^{y_0}(1+y_0)^{p-1}$.
Here we note that $h(y)\rightarrow -\infty$ as $y\rightarrow +\infty$ for any fixed $x$.  
On the other hand,  $x^p\ge (1+y_0)^p$,  which implies $y_0\le x-1$,  so that 
\begin{equation*}
x^pe^y-e^y(1+y)^p\le pe^{x-1}x^p\le C(p)e^{2x}.
\end{equation*}
\end{proof}
The boundedness of $\varphi$ follows from the following lemma,  which is first due to De Giorgi and was also used in \cite{EGZ3},  \cite{Kolo98}.
\begin{lem}\label{l2.8}
Let $\phi:\mathbb{R}^+\rightarrow \mathbb{R}^+$ be a monotone decreasing function such that for some $\delta>0$ and any $s\ge s_0$,  $r>0$, 
\begin{equation*}
r\phi(s+r)\le B_0\phi(s)^{1+\delta}.
\end{equation*}
Then $\phi(s)\equiv 0$ for $s\ge \frac{2B_0\phi^{\delta}(s_0)}{1-2^{-\delta}}+s_0$.
\end{lem}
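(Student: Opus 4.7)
The plan is to run a standard De Giorgi iteration. The aim is to construct an increasing sequence $s_k$ with bounded limit $s_\infty$ such that $\phi(s_k)$ decays geometrically to zero, and then use monotonicity of $\phi$ to conclude $\phi\equiv 0$ on $[s_\infty,\infty)$.

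Concretely, I would set $\phi_0:=\phi(s_0)$ and define
\[
r_k := 2B_0\phi_0^{\delta}\cdot 2^{-k\delta},\qquad s_{k+1}:=s_k+r_k.
\]
Since $\sum_{k\ge 0} 2^{-k\delta}=\frac{1}{1-2^{-\delta}}$, the sequence $s_k$ converges to
\[
s_\infty=s_0+\frac{2B_0\phi_0^{\delta}}{1-2^{-\delta}}.
\]
The content of the proof is then the inductive claim $\phi(s_k)\le \phi_0\cdot 2^{-k}$.

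The base case is immediate. For the inductive step, apply the hypothesis with $s=s_k$, $r=r_k$ to get
\[
\phi(s_{k+1})\le \frac{B_0}{r_k}\phi(s_k)^{1+\delta}.
\]
Plugging in $r_k=2B_0\phi_0^{\delta}\,2^{-k\delta}$ and the induction hypothesis $\phi(s_k)\le \phi_0\,2^{-k}$, the right-hand side becomes
\[
\frac{B_0}{2B_0\phi_0^{\delta}\,2^{-k\delta}}\,\phi_0^{1+\delta}\,2^{-k(1+\delta)}=\tfrac{1}{2}\phi_0\,2^{-k}=\phi_0\,2^{-(k+1)},
\]
which closes the induction. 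This algebraic matching of exponents is the only step requiring any thought; otherwise the argument is completely mechanical and there is no essential obstacle.

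Finally, since $\phi$ is monotone decreasing and nonnegative and $s_k\nearrow s_\infty$, we have $0\le \phi(s_\infty)\le \phi(s_k)\le \phi_0\,2^{-k}\to 0$, so $\phi(s_\infty)=0$, and monotonicity then gives $\phi(s)=0$ for every $s\ge s_\infty$, which is exactly the statement of the lemma.
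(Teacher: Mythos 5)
Your proof is correct and follows essentially the same De Giorgi iteration as the paper; the only cosmetic difference is that you fix the increments $r_k=2B_0\phi_0^{\delta}2^{-k\delta}$ in advance, whereas the paper defines them adaptively by $s_{k+1}-s_k=2B_0\phi(s_k)^{\delta}$ and deduces the geometric decay afterwards. Both arguments hinge on the same halving inequality $\phi(s_{k+1})\le\frac{1}{2}\phi(s_k)$ and arrive at the same threshold $s_0+\frac{2B_0\phi^{\delta}(s_0)}{1-2^{-\delta}}$.
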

\begin{proof}
We define $s_k$ inductively for $k\ge 1$ so that:
\begin{equation*}
s_{k+1}-s_k=2B_0\phi(s_k)^{\delta}.
\end{equation*}
Then we have
\begin{equation*}
\phi(s_{k+1})\le \frac{B_0\phi(s_k)^{1+\delta}}{s_{k+1}-s_k}\le \frac{1}{2}\phi(s_k).
\end{equation*}
So that $\phi(s_k)\le 2^{-k}\phi(s_0)$,  hence $s_{k+1}-s_k\le 2B_02^{-k\delta}\phi^{\delta}(||\varphi_0||_{L^{\infty}})$.  Summing up,  we get that $\sum_{k\ge 0}(s_{k+1}-s_k)\le \sum_{k\ge 0}2B_02^{-k\delta}\phi^{\delta}(s_0)\le \frac{2B_0\phi^{\delta}(s_0)}{1-2^{-\delta}}$.
\end{proof}

\subsection{Estimate the H\"older continuity}
We wish to prove a parabolic version of H\"older continuity of the solution when the right hand side is in $L^{p_0}([0,T]\times M)$ for some $p_0>1$,  namely Theorem \ref{t1.2}.  Throughout this section,  we denote $q_0=\frac{p_0}{p_0-1}$.

Similar to the elliptic case,  the proof of H\"older continuity relies on two ingredients: the first one being the stability result and the second one being the approximation result.  We will use PDE approach to establish the stability result,  and we need to use Demailly's technique to construct the approximation,  when the standard mollification trick no longer works on manifolds.

Let $\delta>0$ and $v$ be a smooth function on $[0,T]\times M$ such that $\partial_tv\le 0$ and for each $t$,  $\omega_0+\sqrt{-1}\partial\bar{\partial}v\ge -\frac{\delta}{2}\omega_0$,  we wish to get a weighted Moser-Trudinger inequality,  similar to the Proposition \ref{p2.3}.  

Let $0<\delta<1$ and $s>0$,  we consider the function $(1-\delta)v-\varphi-s$.  Let $\eta_j:\mathbb{R}\rightarrow \mathbb{R}_+$ be a sequence of smooth functions such that $\eta_j\rightarrow \max(x,0)$ pointwise.  
Define 
\begin{equation*}
A_{s,j,\delta}=\int_{[0,T]\times M}\eta_j((1-\delta)v-\varphi-s)e^F\omega_0^ndt.
\end{equation*}
We the consider the solution $\psi_{j,\delta}$ to the following parabolic equation:
\begin{equation*}
\begin{split}
&(-\partial_t\psi_{j,\delta})\omega_{\psi_{j,\delta}}^n=\frac{\eta_j((1-\delta)v-\varphi-s)}{A_{s,j,\delta}}e^F\omega_0^n,\\
&\psi_{j,\delta}(0,\cdot)=0.
\end{split}
\end{equation*}
\begin{lem}\label{l2.9}
Let $s>||((1-\delta)v-\varphi)^+||_{L^{\infty}}$,  then there exists dimensial constant $c_n$,  $C_n$,  such that on $[0,T]\times M$,
\begin{equation*}
c_nA_{j,s,\delta}^{-\frac{1}{n+2}}((1-\delta)v-\varphi-s)\le (-\psi_j+C_n\delta^{-(n+2)}A_{j,s,\delta})^{\frac{n+1}{n+2}}.
\end{equation*}
\end{lem}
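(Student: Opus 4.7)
The plan is to adapt the barrier argument of Lemma 2.4 to accommodate the weight $v$, introducing a $\delta$-dependent rescaling of the auxiliary constant $\Lambda$. Specifically, I would set
\[
\Phi = \epsilon\bigl((1-\delta)v - \varphi - s\bigr) - (-\psi_j + \Lambda)^{(n+1)/(n+2)},
\]
with $\epsilon = c_n A_{j,s,\delta}^{-1/(n+2)}$ of the same dimensional form as in Lemma 2.4 and $\Lambda = C_n \delta^{-(n+2)} A_{j,s,\delta}$, and aim to show $\Phi \le 0$ globally by applying the parabolic maximum principle to $L = -\partial_t + (-\partial_t\varphi)\Delta_\varphi$. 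The hypothesis on $s$ together with $\psi_j(0,\cdot)=0$ is meant to guarantee $\Phi(0,\cdot) \le 0$, so any hypothetical positive maximum occurs at some $t_0 > 0$ where $L\Phi \le 0$.

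The contributions to $L\Phi$ from the weight $v$ are $-\epsilon(1-\delta)\partial_t v \ge 0$ (by $\partial_t v \le 0$, hence droppable) and $\epsilon(1-\delta)(-\partial_t\varphi)\Delta_\varphi v$. The hypothesis $\sqrt{-1}\partial\bar\partial v \ge -(1+\delta/2)\omega_0$ gives $\Delta_\varphi v \ge -(1+\delta/2)\operatorname{tr}_{\omega_\varphi}\omega_0$, so combined with the $\epsilon\operatorname{tr}_{\omega_\varphi}\omega_0$ coming from $-\epsilon\Delta_\varphi\varphi$ and the $-\frac{n+1}{n+2}(-\psi_j+\Lambda)^{-1/(n+2)}\operatorname{tr}_{\omega_\varphi}\omega_0$ from expanding $\Delta_\varphi[(-\psi_j+\Lambda)^{(n+1)/(n+2)}]$, the net coefficient of $\operatorname{tr}_{\omega_\varphi}\omega_0$ in $\Delta_\varphi\Phi$ becomes
\[
\epsilon\bigl[1-(1-\delta)(1+\delta/2)\bigr] - \tfrac{n+1}{n+2}(-\psi_j+\Lambda)^{-1/(n+2)} \ge \tfrac{\epsilon\delta}{2} - \tfrac{n+1}{n+2}\Lambda^{-1/(n+2)},
\]
which is nonnegative precisely when $\Lambda \ge \bigl(\frac{2(n+1)}{(n+2)\epsilon\delta}\bigr)^{n+2}$. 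Since $\epsilon$ is of order $A_{j,s,\delta}^{-1/(n+2)}$, this forces $\Lambda \sim \delta^{-(n+2)} A_{j,s,\delta}$, which is exactly the form appearing in the lemma.

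Once the $\operatorname{tr}_{\omega_\varphi}\omega_0$ contributions are dropped as nonnegative, the remaining calculation is identical to that of Lemma 2.4. AM-GM gives $\operatorname{tr}_{\omega_\varphi}\omega_{\psi_j} \ge n(\omega_{\psi_j}^n/\omega_\varphi^n)^{1/n}$; substituting via the equations for $\psi_j$ and $\varphi$ yields $\omega_{\psi_j}^n/\omega_\varphi^n = \frac{\eta_j((1-\delta)v-\varphi-s)}{A_{j,s,\delta}} \cdot \frac{-\partial_t\varphi}{-\partial_t\psi_j}$; and the same Young-type inequality $(BA^{1/n})^{n/(n+1)} x \le Ay + Bx^{1+1/n}/y^{1/n}$ combines the $(-\psi_j+\Lambda)^{-1/(n+2)}(-\partial_t\psi_j)$ and $(-\psi_j+\Lambda)^{-1/(n+2)}\eta_j^{1/n}((1-\delta)v-\varphi-s)(-\partial_t\varphi)^{1+1/n}/(-\partial_t\psi_j)^{1/n}$ terms. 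At the putative maximum, the strict inequality $\Phi > 0$ forces $(1-\delta)v-\varphi-s > 0$ and $(-\psi_j+\Lambda)^{-1/(n+2)} > \epsilon^{-1/(n+1)}((1-\delta)v-\varphi-s)^{-1/(n+1)}$; combined with $\eta_j(x) > x$ for $x > 0$, substitution gives $L\Phi > 0$, contradicting $L\Phi \le 0$. Hence $\Phi \le 0$ on $[0,T]\times M$, which is the claimed estimate.

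The main technical point is to verify the $\delta$-scaling $\Lambda \sim \delta^{-(n+2)} A_{j,s,\delta}$: one must check that this choice alone suffices to neutralize the lost positivity of $\omega_v$, while leaving $\epsilon$ of the same scale in $A_{j,s,\delta}$ as in Lemma 2.4 so that the Young step and the eventual contradiction go through without further modification. Every other constant is dimensional.
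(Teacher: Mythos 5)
Your proposal is correct and follows essentially the same route as the paper: the same barrier $\Phi=\eps((1-\delta)v-\varphi-s)-(-\psi_j+\Lambda)^{\frac{n+1}{n+2}}$ with $\eps\sim A_{j,s,\delta}^{-\frac{1}{n+2}}$ and $\Lambda\sim\delta^{-(n+2)}A_{j,s,\delta}$, the $\eps\delta/2$ margin absorbing the loss from $\omega_0+\sqrt{-1}\partial\bar{\partial}v\ge-\frac{\delta}{2}\omega_0$, and the same AM--GM/Young step leading to a contradiction at a positive maximum. The paper's Lemma \ref{l2.9} proof differs only in bookkeeping of the explicit dimensional constants.
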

\begin{proof}
It is very similar to the proof of Lemma \ref{l2.4},  so we will be brief at certain places.
Let $\eps=(n^{\frac{n}{n+1}}\frac{n+1}{n+2})^{\frac{n+1}{n+2}}A_{j,s,\delta}^{-\frac{1}{n+2}}$,  $\Lambda=(\frac{n+1}{n+2})^{n+2}\eps^{-(n+2)}2^{n+2}\delta^{-(n+2)}=\frac{n+1}{n+2}n^{-n}2^{n+2}\delta^{-(n+2)}A_{j,s,\delta}$,  and we define $\Phi=\eps((1-\delta)v-\varphi-s)-(-\psi+\Lambda)^{\frac{n+1}{n+2}}$.  Note that $\Delta_{\varphi}v\ge -(1+\frac{\delta}{2})tr_{\varphi}\omega_0$,  we can compute:
\begin{equation*}
\begin{split}
&\Delta_{\varphi}\Phi=\eps((1-\delta)\Delta_{\varphi}v-\Delta_{\varphi}\varphi)+\frac{n+1}{n+2}(-\psi+\Lambda)^{-\frac{1}{n+2}}+\frac{n+1}{(n+2)^2}(-\psi+\Lambda)^{-\frac{n+3}{n+2}}|\nabla_{\varphi}\psi|^2\\
&\ge \eps(-(1-\delta)(1+\frac{\delta}{2})tr_{\varphi}\omega_0-n+tr_{\varphi}\omega_0)+\frac{n+1}{n+2}(-\psi+\Lambda)^{-\frac{1}{n+2}}\Delta_{\varphi}\psi\\
&\ge -\eps n+(\eps \frac{\delta}{2}-\frac{n+1}{n+2}(-\psi+\Lambda)^{-\frac{1}{n+2}})tr_{\varphi}g+\frac{n+1}{n+2}(-\psi+\Lambda)^{-\frac{1}{n+2}}tr_{\omega_{\varphi}}\omega_{\psi}\\
&\ge -\eps n+n\big(\frac{-\partial_t\varphi}{-\partial_t\psi}\frac{\eta_j((1-\delta)v-\varphi-s)}{A_{j,s,\delta}}\big)^{\frac{1}{n}}.
\end{split}
\end{equation*}
In the above,  we used that,  according to our choice of $\eps$ and $\Lambda$,  we have:
\begin{equation*}
\eps \frac{\delta}{2}\ge \frac{n+1}{n+2}\Lambda^{-\frac{1}{n+2}}\ge \frac{n+1}{n+2}(-\psi+\Lambda)^{-\frac{1}{n+2}}.
\end{equation*}
On the other hand
\begin{equation*}
\begin{split}
&\partial_t\Phi=\eps((1-\delta)\partial_tv-\partial_t\varphi)-\frac{n+1}{n+2}(-\psi+\Lambda)^{-\frac{1}{n+2}}(-\partial_t\psi)\\
&\le \eps(-\partial_t\varphi)-\frac{n+1}{n+2}(-\psi+\Lambda)^{-\frac{1}{n+2}}(-\partial_t\psi).
\end{split}
\end{equation*}
Therefore
\begin{equation}
\begin{split}
&L\Phi=-\partial_t\Phi+(-\partial_t\varphi)\Delta_{\varphi}\Phi\ge -\eps(n+1)(-\partial_t\varphi)+\frac{n+1}{n+2}(-\psi+\Lambda)^{-\frac{1}{n+2}}(-\partial_t\psi)\\
&+\frac{n(n+1)}{n+2}A_{j,s,\delta}^{-\frac{1}{n}}(-\psi+\Lambda)^{-\frac{1}{n+2}}(((1-\delta)v-\varphi-s)^+)^{\frac{1}{n}}\frac{(-\partial_t\varphi)^{1+\frac{1}{n}}}{(-\partial_t\psi)^{\frac{1}{n}}}\\
&\ge -\eps(n+1)(-\partial_t\varphi)+n^{\frac{n}{n+1}}\frac{n+1}{n+2}A_{j,s,\delta}^{-\frac{1}{n+1}}(-\psi+\Lambda)^{-\frac{1}{n+2}}\eta_j^{\frac{1}{n+1}}((1-\delta)v-\varphi-s)(-\partial_t\varphi).
\end{split}
\end{equation}
Note that according to our choice of $\eps$,  we have that
\begin{equation*}
-\eps+n^{\frac{n}{n+1}}\frac{n+1}{n+2}A_{j,s,\delta}^{-\frac{1}{n+1}}\eps^{-\frac{1}{n+1}}>0.
\end{equation*}
Therefore $L\Phi>0$.
Since we have chosen $s>||\eta_j((1-\delta)v-\varphi-s)||_{L^{\infty}}$ for large enough $j$,  we have that $\Phi\le 0$ for $t=0$.  Hence $\Phi\le 0$ for all $(t,x)\in[0,T]\times M$ by maximum principle.
\end{proof}
As a direct consequence of Lemma \ref{l2.9},  we have 
\begin{cor}\label{c2.10}
There exist constants $\beta_0>0$,  $C>0$,  depending only on dimension and background metric,  such that for any $s\ge ||((1-\delta)v_0-\varphi_0-s)^+||_{L^{\infty}}$ and any $0<\delta<1$,
\begin{equation*}
\sup_{t\in[0,T]}\int_M\exp\big(\beta_0A_{s,\delta}^{-\frac{1}{n+1}}(((1-\delta)v-\varphi-s)^+)^{\frac{n+2}{n+1}}\big)\omega_0^n\le \exp\big(C\delta^{-(n+2)}A_{s,\delta}\big).
\end{equation*}
Here 
\begin{equation*}
A_{s,\delta}=\int_{[0,T]\times M}((1-\delta)v-\varphi-s)^+e^F\omega_0^ndt.
\end{equation*}
\end{cor}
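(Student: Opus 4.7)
The plan is to derive Corollary 2.10 directly from Lemma 2.9 by exponentiating the pointwise estimate and applying Corollary 2.2 to the auxiliary function $\psi_{j,\delta}$.

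First I would raise both sides of the inequality in Lemma 2.9 to the power $(n+2)/(n+1)$. Since $\psi_{j,\delta}(0,\cdot)=0$ and $-\partial_t\psi_{j,\delta}>0$ imply $\psi_{j,\delta}\le 0$, the right-hand side $-\psi_{j,\delta}+C_n\delta^{-(n+2)}A_{j,s,\delta}$ is nonnegative, so the left side can be replaced by its positive part for free, giving
\begin{equation*}
c_n^{(n+2)/(n+1)}\,A_{j,s,\delta}^{-1/(n+1)}\bigl(((1-\delta)v-\varphi-s)^+\bigr)^{(n+2)/(n+1)} \le -\psi_{j,\delta} + C_n\delta^{-(n+2)}A_{j,s,\delta}.
\end{equation*}

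Next, I would multiply through by the constant $\alpha_0$ from Corollary 2.2, exponentiate, and integrate over $M$, yielding
\begin{equation*}
\int_M\exp\!\bigl(\beta_0 A_{j,s,\delta}^{-1/(n+1)}(((1-\delta)v-\varphi-s)^+)^{(n+2)/(n+1)}\bigr)\omega_0^n \le e^{\alpha_0 C_n\delta^{-(n+2)}A_{j,s,\delta}}\int_M e^{-\alpha_0\psi_{j,\delta}}\omega_0^n,
\end{equation*}
where $\beta_0:=\alpha_0 c_n^{(n+2)/(n+1)}$. The function $\psi_{j,\delta}$ solves the parabolic complex Monge-Amp\`ere equation with right-hand side $\eta_j((1-\delta)v-\varphi-s)e^F\omega_0^n/A_{j,s,\delta}$, whose integral over $[0,T]\times M$ is $1$ by definition of $A_{j,s,\delta}$, and with zero initial data. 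So Corollary 2.2 applies directly and gives $\sup_{t\in[0,T]}\int_M e^{-\alpha_0\psi_{j,\delta}}\omega_0^n\le C_0$ with $C_0$ depending only on the background metric.

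Finally, I would take the supremum in $t$ and pass to the limit $j\to\infty$. For the approximation $\eta_j(x)=\frac{1}{2}(x+\sqrt{x^2+j^{-1}})$ fixed earlier, $\eta_j$ decreases monotonically to $(\cdot)^+$, so $A_{j,s,\delta}\downarrow A_{s,\delta}$ and $A_{j,s,\delta}^{-1/(n+1)}\uparrow A_{s,\delta}^{-1/(n+1)}$; hence the left-hand integrand increases monotonically in $j$ and monotone convergence delivers the desired left-hand side, while the right-hand side converges to $C_0 e^{\alpha_0 C_n\delta^{-(n+2)}A_{s,\delta}}$. Enlarging $C$ (and using $\delta^{-(n+2)}\ge 1$ to absorb the factor $C_0$) produces the stated form $\exp(C\delta^{-(n+2)}A_{s,\delta})$.

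The only non-routine step is the limit $j\to\infty$, since $A_{j,s,\delta}$ sits inside the exponent on the left; the real reason this works cleanly is the monotonicity of the specific $\eta_j$, which lets monotone convergence handle the outer integral and the $A$ prefactor simultaneously. Beyond that, the argument is purely bookkeeping of constants.
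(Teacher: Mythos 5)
Your proposal is correct and is exactly the argument the paper intends: it presents Corollary \ref{c2.10} as a direct consequence of Lemma \ref{l2.9}, and its explicit proof of the unweighted analogue (Proposition \ref{p2.3}) follows precisely your route --- raise the pointwise bound to the power $\frac{n+2}{n+1}$, multiply by $\alpha_0$, exponentiate, apply Corollary \ref{c2.2} to $\psi_{j,\delta}$ (whose normalized right-hand side has total integral $1$ and zero initial data), and let $j\to\infty$. The only caveat is your final absorption of $C_0$ using $\delta^{-(n+2)}\ge 1$, which would require a lower bound on $A_{s,\delta}$; in truth the bound carries a harmless multiplicative constant, an imprecision already present in the paper's own statement and irrelevant in its application, where one only uses the case $\delta^{-(n+2)}A_{s,\delta}\le 1$.
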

Using the above weighted Moser-Trudinger inequality,  we would like to estimate $\sup_M((1-\delta)v-\varphi-s)$.  Before we can do that,  we wish to estimate $(1-\delta)v-\varphi-s$ in $L^p$ for any $p<\infty$.
For this we have the following lemma:
\begin{lem}\label{l2.11}
Let $\varphi$ solve (\ref{2.1NNN}) with the right hand side $e^F\in L^{p_0}([0,T]\times M,\omega_0^ndt)$.  Let $0<\delta<1$,  and we assume that $v$ is a function on $[0,T]\times M$ such that $\partial_tv\le 0$,  $\omega_0+\sqrt{-1}\partial\bar{\partial}v\ge -\frac{\delta}{2}\omega_0$.  Let $s_0$ be chosen so that:
\begin{enumerate}
\item $s_0\ge ||((1-\delta)v_0-\varphi_0)^+||_{L^{\infty}}$,
\item $A_{s_0,\delta}\le \delta^{n+2}$.
\end{enumerate}
Then for any $p>1$,  we have
\begin{equation*}
||((1-\delta)v-\varphi-s)^+||_{L^p(\omega_0^ndt)}\le C(p)A_{s,\delta}^{\frac{1}{n+2}}.
\end{equation*}
Here $C(p)$ depends on dimension,  the background metric,  $p$ and $T$.
\end{lem}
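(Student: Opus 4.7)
The plan is to interpolate between the weighted Moser--Trudinger estimate of Corollary \ref{c2.10} and the elementary inequality $x^{\alpha}\le C(\alpha)e^x$ for $x\ge 0$. Write $u\defd ((1-\delta)v-\varphi-s)^{+}$ for brevity, and fix any $s\ge s_0$.

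First I would observe that $s\mapsto A_{s,\delta}$ is monotone decreasing, since both the integrand and the set where it is positive shrink as $s$ grows. Consequently, the hypothesis $A_{s_0,\delta}\le \delta^{n+2}$ propagates to $A_{s,\delta}\le \delta^{n+2}$ for every $s\ge s_0$, so that $C\delta^{-(n+2)}A_{s,\delta}\le C$ in the right-hand side of Corollary \ref{c2.10}. Also, $s\ge s_0\ge\|((1-\delta)v_0-\varphi_0)^{+}\|_{L^{\infty}}$ ensures the hypothesis of that corollary is met. Hence, uniformly in $t\in[0,T]$,
\begin{equation*}
\int_M \exp\!\bigl(\beta_0 A_{s,\delta}^{-1/(n+1)}u^{(n+2)/(n+1)}\bigr)\,\omega_0^n\le C_0,
\end{equation*}
with $C_0$ depending only on the background data.

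Next I would extract the $L^p$ bound pointwise (in $t$) by choosing the exponent $\alpha=\frac{p(n+1)}{n+2}$ in the elementary inequality $x^{\alpha}\le C(\alpha)e^{x}$, applied to $x=\beta_0 A_{s,\delta}^{-1/(n+1)}u^{(n+2)/(n+1)}$. This gives pointwise
\begin{equation*}
u^{p}\le C(p)\, A_{s,\delta}^{\,p/(n+2)}\exp\!\bigl(\beta_0 A_{s,\delta}^{-1/(n+1)}u^{(n+2)/(n+1)}\bigr).
\end{equation*}
Integrating in $x$ and applying the uniform Moser--Trudinger bound above yields
\begin{equation*}
\int_M u^{p}\,\omega_0^n\le C(p)\, C_0\, A_{s,\delta}^{\,p/(n+2)}\quad\text{for every }t\in[0,T].
\end{equation*}
Integrating once more in $t$ over $[0,T]$ and taking $p$-th roots produces
\begin{equation*}
\|u\|_{L^p(\omega_0^n\,dt)}\le C(p)\,A_{s,\delta}^{\,1/(n+2)},
\end{equation*}
with $C(p)$ depending on $p$, $T$ and the background metric, as claimed.

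There is no real obstacle here beyond organizing the two ingredients correctly; the only subtle point is the monotonicity argument that converts the local hypothesis $A_{s_0,\delta}\le \delta^{n+2}$ into a uniform control $\delta^{-(n+2)}A_{s,\delta}\le 1$ for all $s\ge s_0$, which is exactly what decouples the $L^p$ estimate from the dangerous $\delta^{-(n+2)}$ factor present in Corollary \ref{c2.10}.
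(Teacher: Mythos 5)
Your proposal is correct and follows essentially the same route as the paper: apply Corollary \ref{c2.10} under hypotheses (1)--(2) so that the factor $\delta^{-(n+2)}A_{s,\delta}$ on the right is uniformly bounded, then convert the resulting exponential integrability into the $L^p$ bound by an elementary pointwise inequality and integrate in $t$. The only cosmetic differences are that the paper uses $x^p/p!\le e^x$ for integer $p$ (relabeling the exponent $p'=\tfrac{n+2}{n+1}p$) where you use $x^{\alpha}\le C(\alpha)e^{x}$ for real $\alpha$, and that your explicit remark on the monotonicity of $s\mapsto A_{s,\delta}$, which transfers condition (2) from $s_0$ to all $s\ge s_0$, is a point the paper leaves implicit.
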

\begin{proof}
According to Corollary \ref{c2.10},  if $s_0$ satisfies the assumptions (1) and (2) above,  we would have:
\begin{equation*}
\sup_{t\in[0,T]}\int_M\exp\big(\beta_0A_{s,\delta}^{-\frac{1}{n+1}}(((1-\delta)v-\varphi-s)^+)^{\frac{n+2}{n+1}}\big)\omega_0^n\le e^C.
\end{equation*}
Here the $C$ depends only on dimension and background metric.  Therefore,  for any positive integer $p$,  we have that:
\begin{equation*}
\sup_{t\in[0,T]}\int_M\frac{1}{p!}\beta_0^pA_{s,\delta}^{-\frac{p}{n+1}}\big(((1-\delta)v-\varphi-s)^+\big)^{\frac{n+2}{n+1}p}\omega_0\le e^C.
\end{equation*}
Therefore
\begin{equation*}
\int_{[0,T]\times M}(((1-\delta)v-\varphi-s)^+)^{\frac{n+2}{n+1}p}\omega_0^ndt\le e^CTp!\beta_0^pA_{s,\delta}^{\frac{p}{n+1}}.
\end{equation*}
If we put $p'=\frac{n+2}{n+1}p$,  the result then follows.
\end{proof}
Using this,  we can conclude that
\begin{lem}\label{l2.12}
Under the same assumption as Lemma \ref{l2.11},  we have that for any $\eta<\frac{1}{q_0(n+1)}$,  there exists a constant $B_0$,  depending only on $\eta$,  $T$,  $||e^F||_{L^{p_0}}$,  the background metric,  and the dimension.
\begin{equation*}
rvol(\Omega_{s+r,\delta})\le B_0vol^{1+\eta}(\Omega_s).
\end{equation*}
\end{lem}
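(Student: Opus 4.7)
The plan is to chain two applications of H\"older's inequality with the $L^p$-bound from Lemma \ref{l2.11}: one to pass from $r\cdot\text{vol}(\Omega_{s+r,\delta})$ to an integral of $((1-\delta)v-\varphi-s)^+$, and a second one to control $A_{s,\delta}$ itself in terms of $\text{vol}(\Omega_{s,\delta})$, so that the final quantity on the right depends only on volumes.

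First I would use the trivial Chebyshev-type observation that on $\Omega_{s+r,\delta}$ we have $(1-\delta)v-\varphi-s\ge r$, hence
\begin{equation*}
r\cdot \text{vol}(\Omega_{s+r,\delta})\le \int_{\Omega_{s,\delta}}((1-\delta)v-\varphi-s)^+\omega_0^n dt.
\end{equation*}
Then H\"older with conjugate exponents $p_1,p_2$ (that is, $1/p_1+1/p_2=1$), followed by Lemma \ref{l2.11} applied at exponent $p_1$, yields
\begin{equation*}
r\cdot \text{vol}(\Omega_{s+r,\delta})\le C(p_1)\,A_{s,\delta}^{1/(n+2)}\,\text{vol}(\Omega_{s,\delta})^{1/p_2}.
\end{equation*}

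Next, to convert $A_{s,\delta}$ itself into a power of $\text{vol}(\Omega_{s,\delta})$, I would split its defining integral with H\"older in three factors using exponents $p_1',p_0,p_2'$ satisfying $1/p_1'+1/p_0+1/p_2'=1$:
\begin{equation*}
A_{s,\delta}\le \|((1-\delta)v-\varphi-s)^+\|_{L^{p_1'}}\cdot \|e^F\|_{L^{p_0}}\cdot \text{vol}(\Omega_{s,\delta})^{1/p_2'}.
\end{equation*}
Applying Lemma \ref{l2.11} once more and absorbing the factor $A_{s,\delta}^{1/(n+2)}$ to the left gives
\begin{equation*}
A_{s,\delta}^{1/(n+2)}\le C\,\text{vol}(\Omega_{s,\delta})^{1/((n+1)p_2')}.
\end{equation*}
Substituting back into the previous display produces
\begin{equation*}
r\cdot\text{vol}(\Omega_{s+r,\delta})\le B_0\,\text{vol}(\Omega_{s,\delta})^{\,1/((n+1)p_2')+1/p_2}.
\end{equation*}

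To meet the target exponent, I would let $p_1\to\infty$ (so $p_2\to 1$) and $p_1'\to\infty$ (so $1/p_2'\to 1/q_0$), driving the exponent on the right toward $1+1/(q_0(n+1))$. For any prescribed $\eta<1/(q_0(n+1))$, choosing $p_1,p_1'$ sufficiently large achieves exponent $1+\eta$, at the cost of having $B_0$ depend on $\eta$ through the constants $C(p_1),C(p_1')$ of Lemma \ref{l2.11}. The main obstacle is precisely this exponent bookkeeping: since the constants in Lemma \ref{l2.11} blow up as the $L^p$ exponent tends to infinity, the sharp value $\eta=1/(q_0(n+1))$ is never attained, which is exactly why the hypothesis requires strict inequality.
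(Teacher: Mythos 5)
Your proposal is correct and follows essentially the same route as the paper: bound $r\,\mathrm{vol}(\Omega_{s+r,\delta})$ by the $L^1$ norm, apply H\"older plus Lemma \ref{l2.11} once to extract a factor $A_{s,\delta}^{1/(n+2)}$, and apply H\"older plus Lemma \ref{l2.11} a second time to absorb $A_{s,\delta}$ into a power of $\mathrm{vol}(\Omega_{s,\delta})$; your single three-factor H\"older is just the paper's two successive two-factor H\"older steps (with $p_1'=q_0\beta$) written at once. The exponent bookkeeping and the reason the strict inequality $\eta<\frac{1}{q_0(n+1)}$ is needed match the paper's argument.
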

\begin{proof}
Using H\"older inequality,  we have
\begin{equation*}
\begin{split}
&A_{s,\delta}=\int_{[0,T]\times M}((1-\delta)v-\varphi-s)^+e^F\omega_0^ndt\le ||((1-\delta)v-\varphi-s)^+||_{L^{q_0}(\omega_0^ndt)}||e^F||_{L^{p_0}(\omega_0^ndt)}\\
&\le ||e^F||_{L^{p_0}}||((1-\delta)v-\varphi-s)^+||_{L^{q_0\beta}}vol^{\frac{1}{q_0}(1-\frac{1}{\beta})}(\Omega_{s,\delta}).
\end{split}
\end{equation*}
Using Lemma \ref{l2.11} we get that
\begin{equation*}
||((1-\delta)v-\varphi-s)^+||_{L^{q_0\beta}}\le C(q_0,\beta)A_{s,\delta}^{\frac{1}{n+2}}.
\end{equation*}
So that
So that
\begin{equation*}
A_{s,\delta}^{\frac{n+1}{n+2}}\le ||e^F||_{p_0}C(q_0,\beta)vol^{\frac{1}{q_0}(1-\frac{1}{\beta})}(\Omega_s).
\end{equation*}
On the other hand
\begin{equation*}
\begin{split}
&||((1-\delta)v-\varphi-s)^+||_{L^1}\le vol^{1-\frac{1}{\beta}}(\Omega_s)||((1-\delta)v-\varphi-s)^+||_{L^{\beta}}\le vol^{1-\frac{1}{\beta}}(\Omega_s)C_1(\beta)A_{s,\delta}^{\frac{1}{n+2}}\\
&\le C_1(\beta)C^{\frac{1}{n+1}}(q_0,\beta)||e^F||_{L^{p_0}}^{\frac{1}{n+1}}vol^{(1+\frac{1}{q_0(n+1)})(1-\frac{1}{\beta})}(\Omega_{s,\delta}).
\end{split}
\end{equation*}
On the other hand,  
\begin{equation*}
||((1-\delta)v-\varphi-s)^+||_{L^1}\ge rvol(\Omega_{s+r,\delta}).
\end{equation*}
Therefore,  
\begin{equation*}
rvol(\Omega_{s+r,\delta})\le C_1(\beta)C^{\frac{1}{n+1}}(q_0,\beta)||e^F||_{L^{p_0}}^{\frac{1}{n+1}}vol^{(1+\frac{1}{q_0(n+1)})(1-\frac{1}{\beta})}(\Omega_{s,\delta}).
\end{equation*}
\end{proof}
Using Lemma \ref{l2.8},  we can conclude the following proposition.

\begin{prop}\label{p2.13}
Let $\varphi$ solve (\ref{2.1NNN}) with the right hand side $e^F\in L^{p_0}([0,T]\times M,\omega_0^ndt)$.  Assume that $v$ is a function on $[0,T]\times M$ such that $\partial_tv\le 0$,  $\omega_0+\sqrt{-1}\partial\bar{\partial}v\ge -\frac{\delta}{2}\omega_0$.  Let $s_0$ be chosen so that:
\begin{enumerate}
\item $s_0\ge ||((1-\delta)v_0-\varphi_0)^+||_{L^{\infty}}$,
\item $A_{s_0,\delta}\le \delta^{n+2}$.
\end{enumerate}
Then for any $\mu<\frac{1}{q_0(n+1)}$,  we have
\begin{equation*}
\sup((1-\delta)v-\varphi)\le s_0+C\big(vol(\{(1-\delta)v-\varphi-s_0>0\})\big)^{\mu}.
\end{equation*}
Here $vol(E)=\int_E\omega_0^ndt$ for any measurable subset of $[0,T]\times M$.
Here $C$ depends on the choice of $\mu<\frac{1}{q_0(n+1)}$,  an upper bound for $||e^F||_{L^{p_0}}$,  $T$,  the background metric,  and dimension.
\end{prop}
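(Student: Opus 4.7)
The plan is to apply the De Giorgi iteration of Lemma \ref{l2.8} to the monotone-decreasing function
\[
\phi(s) := \mathrm{vol}(\Omega_{s,\delta}), \qquad \Omega_{s,\delta} := \{(t,x)\in[0,T]\times M : (1-\delta)v(t,x)-\varphi(t,x)>s\},
\]
with the differential inequality supplied by Lemma \ref{l2.12}. The main work has already been done; what remains is to check that the hypotheses of Lemma \ref{l2.12} persist at every level $s\ge s_0$, and then to tune exponents.

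First I would verify the persistence of the assumptions. Assumption (1), namely $s\ge \|((1-\delta)v_0-\varphi_0)^+\|_{L^\infty}$, is obviously inherited from $s_0$ as $s$ grows. For assumption (2), note that the integrand $((1-\delta)v-\varphi-s)^+$ defining $A_{s,\delta}$ is pointwise non-increasing in $s$, so $A_{s,\delta}\le A_{s_0,\delta}\le \delta^{n+2}$ for all $s\ge s_0$. Therefore Lemma \ref{l2.12} applies uniformly at every level $s\ge s_0$: for any fixed $\eta<\frac{1}{q_0(n+1)}$ there is $B_0$ such that
\[
r\,\phi(s+r)\le B_0\,\phi(s)^{1+\eta} \qquad \text{for all } s\ge s_0,\ r>0.
\]

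Now I would invoke Lemma \ref{l2.8} with this $\eta$, which yields $\phi(s)\equiv 0$ for every
\[
s \ge s_0 + \frac{2B_0\,\phi(s_0)^{\eta}}{1-2^{-\eta}}.
\]
Since $\phi(s)=0$ forces $(1-\delta)v-\varphi\le s$ on a set of full measure in $[0,T]\times M$, and both $v$ and $\varphi$ are continuous, we conclude
\[
\sup_{[0,T]\times M}\bigl((1-\delta)v-\varphi\bigr)\le s_0 + \frac{2B_0\,\phi(s_0)^{\eta}}{1-2^{-\eta}}.
\]

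Finally, to replace the exponent $\eta$ by an arbitrary $\mu<\frac{1}{q_0(n+1)}$, I would simply choose some $\eta$ with $\mu\le\eta<\frac{1}{q_0(n+1)}$ and use the trivial bound $\phi(s_0)\le T\,\mathrm{vol}(M)$ to write
\[
\phi(s_0)^\eta = \phi(s_0)^\mu\cdot \phi(s_0)^{\eta-\mu} \le \bigl(T\,\mathrm{vol}(M)\bigr)^{\eta-\mu}\phi(s_0)^\mu,
\]
absorbing the new factor into the constant $C$. This yields the claimed bound. I do not anticipate a serious obstacle: the only subtlety is the monotonicity check ensuring Lemma \ref{l2.12} is valid at all levels $s\ge s_0$, after which the result is a direct consequence of the iteration lemma.
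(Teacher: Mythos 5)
Your proposal is correct and follows essentially the same route as the paper: apply Lemma \ref{l2.12} to get $r\,\mathrm{vol}(\Omega_{s+r,\delta})\le B_0\,\mathrm{vol}(\Omega_{s,\delta})^{1+\eta}$ and then run the De Giorgi iteration of Lemma \ref{l2.8} with $\phi(s)=\mathrm{vol}(\Omega_{s,\delta})$, which is exactly the paper's proof. Your explicit check that conditions (1)--(2) persist for all $s\ge s_0$ (via monotonicity of $A_{s,\delta}$ in $s$) and the final exponent adjustment $\eta\mapsto\mu$ are fine, indeed slightly more careful than the paper's one-line argument (which simply takes $\eta=\mu$ directly).
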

\begin{proof}
We wish to apply Lemma \ref{l2.8} with $\phi(s)=vol(\Omega_{s,\delta}).$ Then we have that $\phi(s)\equiv 0$ for $s\ge \frac{2B_0\phi^{\eta}(s_0)}{1-2^{-\eta}}+s_0$.  This precisely means that:
\begin{equation*}
\sup((1-\delta)v-\varphi)\le \frac{2B_0\phi^{\eta}(s_0)}{1-2^{-\eta}}+s_0.
\end{equation*}
Note that according to Lemma \ref{l2.12},  $\eta$ can be taken to be any value less than $\frac{1}{q_0(n+1)}$.
\end{proof}
We further note that if $v$ is bounded:
\begin{equation*}
vol(\Omega_{s_0,\delta})\le \frac{1}{s_0}\int_{\Omega_{s_0,\delta}}((1-\delta)v-\varphi)^+\omega_0^ndt\le \frac{1}{s_0}\big(||(v-\varphi)^+||_{L^1}+\delta||v||_{L^{\infty}}vol(\Omega_{s_0,\delta})\big).
\end{equation*}
Therefore,  if we choose $s_0$ so that $s_0\ge 2\delta||v||_{L^{\infty}}$,  we would have that 
\begin{equation*}
vol(\Omega_{s_0,\delta})\le \frac{2}{s_0}||(v-\varphi)^+||_{L^1}.
\end{equation*}
Therefore,  we have the following consequence of Proposition \ref{p2.13}:
\begin{cor}
Assume that $v\le 0$ and $v$ is bounded,  with $\omega_0+\sqrt{-1}\partial\bar{\partial}v\ge -\frac{\delta}{2}\omega_0$.  Assume also that $s_0$ is chosen so that 
\begin{enumerate}
\item $s_0\ge||((1-\delta)v_0-\varphi_0)^+||_{L^{\infty}}$,
\item $A_{s_0,\delta}\le \delta^{n+2}$.
\item $s_0\ge 2\delta||v||_{L^{\infty}}$.
\end{enumerate}
Then for any $\mu<\frac{1}{q_0(n+1)}$,  we have
\begin{equation*}
\sup_{[0,T]\times M}(v-\varphi)\le s_0+C_3s_0^{-\mu}||(v-\varphi)^+||_{L^1}^{\mu}.
\end{equation*}
\end{cor}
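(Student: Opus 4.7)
The plan is that this corollary is a direct repackaging of Proposition~\ref{p2.13} combined with the short $L^1$ computation displayed immediately before its statement, and no new ingredient is required. Since hypotheses (1) and (2) on $s_0$ in the corollary coincide with those of Proposition~\ref{p2.13}, I would first apply that proposition directly to obtain
$$\sup_{[0,T]\times M}\bigl((1-\delta)v-\varphi\bigr)\le s_0+C\,vol\bigl(\Omega_{s_0,\delta}\bigr)^{\mu}$$
for any $\mu<\frac{1}{q_0(n+1)}$, where $\Omega_{s_0,\delta}=\{(1-\delta)v-\varphi-s_0>0\}$. The remaining work is to replace the left-hand side by $\sup(v-\varphi)$ and to replace the volume on the right by the $L^1$-norm of $(v-\varphi)^+$.

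For the first replacement I would use the sign assumption $v\le 0$: since $-\delta v\ge 0$, one has $(1-\delta)v-\varphi\ge v-\varphi$ pointwise, hence $\sup_{[0,T]\times M}(v-\varphi)\le \sup_{[0,T]\times M}((1-\delta)v-\varphi)$. For the second, I would invoke precisely the computation displayed just before the corollary: the pointwise estimate $((1-\delta)v-\varphi)^+\le (v-\varphi)^++\delta||v||_{L^\infty}$ (again using $v\le 0$) together with Markov's inequality on $\Omega_{s_0,\delta}$ gives
$$vol(\Omega_{s_0,\delta})\le\frac{1}{s_0}\bigl(||(v-\varphi)^+||_{L^1}+\delta||v||_{L^\infty}\,vol(\Omega_{s_0,\delta})\bigr),$$
and hypothesis (3), $s_0\ge 2\delta||v||_{L^\infty}$, is exactly what lets me absorb the last term to obtain $vol(\Omega_{s_0,\delta})\le \frac{2}{s_0}||(v-\varphi)^+||_{L^1}$. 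Substituting this back into the bound from Proposition~\ref{p2.13} gives the claimed inequality with $C_3=C\cdot 2^{\mu}$. There is no real obstacle here; the corollary is essentially a bookkeeping step that recasts Proposition~\ref{p2.13} into the $L^1$-based form that will be convenient for the subsequent stability and H\"older continuity arguments.
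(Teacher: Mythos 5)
Your proposal is correct and matches the paper's own (essentially unwritten) argument: the paper establishes the bound $vol(\Omega_{s_0,\delta})\le \frac{2}{s_0}\|(v-\varphi)^+\|_{L^1}$ by exactly the Markov-plus-absorption computation you describe, and then states the corollary as a direct consequence of Proposition \ref{p2.13}, with the passage from $\sup((1-\delta)v-\varphi)$ to $\sup(v-\varphi)$ relying on $v\le 0$ just as you note. No gaps; this is the same bookkeeping route the authors intend.
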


Given $0<\delta<1$,  the set of $s_0$ satisfying (1)-(3) above are given by $[s_*(\delta),+\infty)$,  where $s_*(\delta)$ is the infimum of $s_0$ satisfying the above 3 conditions.
Therefore,  from the above corollary,  we get that 
\begin{equation}\label{2.7NNN}
\sup_{[0,T]\times M}(v-\varphi)\le \inf_{0<\delta<1}\inf_{s_0\ge s_*(\delta)}\big(s_0+C_3s_0^{-\mu}||(v-\varphi)^+||_{L^1}^{\mu}\big).
\end{equation}
Next we will estimate $s_*(\delta)$ for any given $0<\delta<1$,  then we will choose a suitable $\delta$.
We make the following claim:
\begin{lem}\label{l2.15}
Given $0<\delta<1$,  and given $\beta>1$,  there exists a constant $C_1(\beta)$,  depending on the choice of $\beta$,  such that
\begin{equation}
s_*(\delta)\le \max\big(2||(v_0-\varphi_0)^+||_{L^{\infty}},2\delta||v||_{L^{\infty}},C_1(\beta)\delta^{-\frac{q_0(n+1)}{1-\frac{1}{\beta}}}||(v-\varphi)^+||_{L^1}\big).
\end{equation}
\end{lem}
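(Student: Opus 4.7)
The plan is to show that any $s_0$ at least as large as the right-hand side of the claimed inequality automatically verifies conditions (1)--(3) of the preceding corollary, so that $s_*(\delta)$ is bounded by that maximum. The three terms in the max correspond, respectively, to the three conditions.

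Conditions (1) and (3) are elementary. Condition (3), $s_0\ge 2\delta||v||_{L^\infty}$, is built into the second term of the max. For (1), since $v\le 0$, the identity $(1-\delta)v_0-\varphi_0=(v_0-\varphi_0)-\delta v_0$ gives $||((1-\delta)v_0-\varphi_0)^+||_{L^\infty}\le ||(v_0-\varphi_0)^+||_{L^\infty}+\delta||v||_{L^\infty}$, which is bounded by $s_0$ once $s_0$ exceeds the first two terms of the max.

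The substantive step is verifying (2), namely $A_{s_0,\delta}\le \delta^{n+2}$. Under (3) we have $-\delta v\le \delta||v||_{L^\infty}\le s_0/2$, so $(1-\delta)v-\varphi-s_0\le v-\varphi-s_0/2$ pointwise; hence $\Omega_{s_0,\delta}\subseteq\{v-\varphi>s_0/2\}$, and Chebyshev yields $vol(\Omega_{s_0,\delta})\le 2s_0^{-1}||(v-\varphi)^+||_{L^1}$. Next I apply the three-factor H\"older inequality with exponents $(q_0\beta,\, p_0,\, [(1/q_0)(1-1/\beta)]^{-1})$ (which sum to $1$) to $A_{s_0,\delta}=\int f\cdot e^F\cdot \mathbf{1}_{\Omega_{s_0,\delta}}\,d\mu$ with $f=((1-\delta)v-\varphi-s_0)^+$, obtaining
\begin{equation*}
A_{s_0,\delta}\le ||f||_{L^{q_0\beta}}\,||e^F||_{L^{p_0}}\,vol(\Omega_{s_0,\delta})^{\frac{1}{q_0}(1-\frac{1}{\beta})}.
\end{equation*}
The $L^{q_0\beta}$ norm is controlled by the interpolation $||f||_{L^{q_0\beta}}\le ||f||_{L^\infty}^{1-\frac{1}{q_0\beta}}||f||_{L^1}^{\frac{1}{q_0\beta}}$; since $(1-\delta)v\le 0$ forces $f\le -\varphi\le ||\varphi||_{L^\infty}$ (finite by Theorem 1.1), and $||f||_{L^1}\le ||(v-\varphi)^+||_{L^1}$, the exponents on $||(v-\varphi)^+||_{L^1}$ collapse to $1/q_0$, giving
\begin{equation*}
A_{s_0,\delta}\le C(\beta)\,s_0^{-\frac{1}{q_0}(1-\frac{1}{\beta})}||(v-\varphi)^+||_{L^1}^{1/q_0}.
\end{equation*}

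Imposing the right-hand side to be $\le \delta^{n+2}$ and solving for $s_0$, then absorbing any fractional power of $||(v-\varphi)^+||_{L^1}$ into the constant via the trivial bound $||(v-\varphi)^+||_{L^1}\le ||\varphi||_{L^\infty}\,vol([0,T]\times M)$ (which follows from $v\le 0$), produces the third term in the claimed maximum. The main obstacle is the exponent bookkeeping: the three H\"older exponents must be tuned so that $||e^F||_{L^{p_0}}$ is picked up at its natural power, the $L^{q_0\beta}$ interpolation contributes a factor $||(v-\varphi)^+||_{L^1}^{1/(q_0\beta)}$ that combines with the $(1/q_0)(1-1/\beta)$-power from the volume bound to reconstitute the exponent $1/q_0$, and finally inverting the $s_0$-dependence produces the exponent $q_0(n+1)/(1-1/\beta)$ of $\delta^{-1}$ asserted in the claim.
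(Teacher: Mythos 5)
Your reductions of conditions (1) and (3), and your use of $s_0\ge 2\delta\|v\|_{L^\infty}$ to get $\Omega_{s_0,\delta}\subset\{v-\varphi>s_0/2\}$ and hence $vol(\Omega_{s_0,\delta})\le 2s_0^{-1}\|(v-\varphi)^+\|_{L^1}$, match the paper. The gap is in the verification of condition (2): you control $\|((1-\delta)v-\varphi-s_0)^+\|_{L^{q_0\beta}}$ by the trivial interpolation $\|f\|_{L^{q_0\beta}}\le\|f\|_{L^\infty}^{1-\frac{1}{q_0\beta}}\|f\|_{L^1}^{\frac{1}{q_0\beta}}$ with $\|f\|_{L^\infty}\le\|\varphi\|_{L^\infty}$, whereas the paper uses the weighted Moser--Trudinger estimate (Corollary \ref{c2.10}, in the form of Lemma \ref{l2.11}), namely $\|f\|_{L^{q_0\beta}}\le C(q_0,\beta)e^{C_0\delta^{-(n+2)}A_{s,\delta}}A_{s,\delta}^{\frac{1}{n+2}}$, evaluated at the critical threshold $s=s_*$ where (by continuity, if $s_*$ strictly exceeds the first two constraints) $A_{s_*,\delta}=\delta^{n+2}$. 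That identity simultaneously tames the exponential and contributes the crucial small factor $A_{s_*,\delta}^{1/(n+2)}=\delta$, which is what converts $\delta^{n+2}$ into $\delta^{n+1}$ and gives $\|(v-\varphi)^+\|_{L^1}$ to the exact power $1$. Your route loses exactly this factor: chasing your exponents, condition (2) is only guaranteed once
\begin{equation*}
s_0\;\ge\; C(\beta)\,\delta^{-\frac{q_0(n+2)}{1-\frac{1}{\beta}}}\,\|(v-\varphi)^+\|_{L^1}^{\frac{1}{1-\frac{1}{\beta}}},
\end{equation*}
and even after absorbing the surplus power of $\|(v-\varphi)^+\|_{L^1}$ into the constant you are left with $\delta^{-\frac{q_0(n+2)}{1-1/\beta}}$ in place of the claimed $\delta^{-\frac{q_0(n+1)}{1-1/\beta}}$. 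Since $\delta<1$, this is a strictly weaker statement, so the lemma as stated is not proved.

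This discrepancy is not repairable by relabeling $\beta$: to dominate $\frac{n+1}{1-1/\beta}$ by $\frac{n+2}{1-1/\beta'}$ you would need $1-\frac{1}{\beta'}\ge\frac{n+2}{n+1}(1-\frac{1}{\beta})$, which is impossible once $\beta\ge n+2$; and it is precisely the regime of large $\beta$ that is used later (in the proof of Theorem \ref{t2.2} one takes $\beta$ large so that $1-\frac{q_0(n+1)}{1+q_0(n+1)}\frac{1}{1-1/\beta}>\alpha$), so the weakened exponent would degrade the stability estimate and the resulting H\"older exponent. Note also that importing the sharper $L^{q_0\beta}$ bound into your ``sufficiency'' framing is delicate, because the exponential factor in Corollary \ref{c2.10} is only controlled when $A_{s_0,\delta}\le\delta^{n+2}$, which is the very condition you are trying to verify; the paper sidesteps this by arguing at the critical value $s_*$, where $A_{s_*,\delta}=\delta^{n+2}$ holds by definition of the infimum. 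To fix your proof, either adopt that critical-value argument, or at least combine your monotonicity observations with the Moser--Trudinger bound at the threshold rather than the $L^\infty$--$L^1$ interpolation.
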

\begin{proof}
Clearly we have that $s_*(\delta)\ge\max(||((1-\delta)v_0-\varphi_0)^+||_{L^{\infty}},2\delta||v||_{L^{\infty}})$.
From now on,  we will denote $s_*(\delta)$ simply as $s_*$,  for the simplicity of notations. 
If we have $>$ holds in the above,  then we would have $A_{s_*,\delta}=\delta^{n+2}$. 
On the other hand,
\begin{equation*}
\begin{split}
&A_{s_*,\delta}=\int_{\Omega_{s_*,\delta}}((1-\delta)v-\varphi-s_*)^+e^F\omega_0^ndt\le ||e^F||_{L^{p_0}}||((1-\delta)v-\varphi-s_*)^+||_{L^{q_0}}\\
&\le ||e^F||_{L^{p_0}}||((1-\delta)v-\varphi-s_*)^+||_{L^{\beta q_0}}vol^{\frac{1}{q_0}(1-\frac{1}{\beta})}(\Omega_{s_*,\delta})\\
&\le ||e^F||_{L^{p_0}}C(\beta,q_0)e^{C_0\delta^{-(n+2)}A_{s_*,\delta}}A_{s,\delta}^{\frac{1}{n+2}}vol^{\frac{1}{q_0}(1-\frac{1}{\beta})}(\Omega_{s_*,\delta})\\
&\le ||e^F||_{L^{p_0}}C(\beta,q_0)e^{C_0\delta^{-(n+2)}A_{s_*,\delta}}A_{s_*,\delta}^{\frac{1}{n+2}}(\frac{2}{s_*}||(v-\varphi)^+||_{L^1})^{\frac{1}{q_0}(1-\frac{1}{\beta})}.
\end{split}
\end{equation*}
Since we have that $A_{s_*,\delta}=\delta^{n+2}$,  we get that
\begin{equation*}
\delta^{n+1}\le ||e^F||_{p_0}C(\beta,q_0)(\frac{2}{s_*}||(v-\varphi)^+||_{L^1})^{\frac{1}{q_0}(1-\frac{1}{\beta})}.
\end{equation*}
In other words,  we get
\begin{equation*}
s_*\le C_1(\beta)\delta^{-\frac{q_0(n+1)}{1-\frac{1}{\beta}}}||(v-\varphi)^+||_{L^1}.
\end{equation*}
Therefore,  we have that
\begin{equation*}
\begin{split}
&s_*(\delta)\le \max\big(||((1-\delta)v_0-\varphi_0)^+||_{L^{\infty}},2\delta||v||_{L^{\infty}},C_1(\beta)\delta^{-\frac{q_0(n+1)}{1-\frac{1}{\beta}}}||(v-\varphi)^+||_{L^1}\big)\\
&\le \max\big(||(v_0-\varphi_0)^+||_{L^{\infty}}+\delta||v_0||_{L^{\infty}},2\delta||v||_{L^{\infty}},C_1(\beta)\delta^{-\frac{q_0(n+1)}{1-\frac{1}{\beta}}}||(v-\varphi)^+||_{L^1}\big)\\
&\le \max\big(2||(v_0-\varphi_0)^+||_{L^{\infty}},2\delta||v||_{L^{\infty}},C_1(\beta)\delta^{-\frac{q_0(n+1)}{1-\frac{1}{\beta}}}||(v-\varphi)^+||_{L^1}\big).
\end{split}
\end{equation*}
\end{proof}

Next we will combine (\ref{2.7NNN}) and Lemma \ref{l2.15} to estimate $\sup(v-\varphi)$.  That is,  we will minimize $\inf_{0<\delta<1}\inf_{s_0\ge s_*(\delta)}(s_0+C_3s_0^{-\mu}||(v-\varphi)^+||_{L^1}^{\mu})$ subject to the constraint given by Lemma \ref{l2.15}.
We eventually obtain the following parabolic analogue of Theorem 1. 1 in \cite{GKZ}:
\begin{thm}\label{t2.2}
Let $\varphi$ solves (\ref{2.1NNN}) with initial value $\varphi_0$ such that the right hand side $e^F\in L^{p_0}(\omega_0^ndt)$ for some $p_0>1$.  Let $v$ be a bounded function defined on $[0,T]\times M$ such that $\partial_tv\le 0$ and $\omega_0+\sqrt{-1}\partial\bar{\partial}v\ge 0$.  Then for any $\alpha<\frac{1}{1+q_0(n+1)}$,
\begin{equation*}
\sup_{[0,T]\times M}(v-\varphi)\le C\max\big(||(v_0-\varphi_0)^+||_{L^{\infty}},||(v-\varphi)^+||_{L^1}^{\alpha}\big).
\end{equation*}
In the above,  $v_0=v|_{t=0}$ and $C$ depends only on the background metric,  $||e^F||_{L^{p_0}}$,  $T$,  $n$ and choice of $\alpha<\frac{1}{1+q_0(n+1)}$.
\end{thm}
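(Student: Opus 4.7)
My strategy is to combine the quantitative inequality (\ref{2.7NNN}) with the bound for $s_{*}(\delta)$ given by Lemma \ref{l2.15}, and then optimize the free parameters $\delta$, $\beta$, $\mu$ so that the resulting exponent on $M := ||(v-\varphi)^{+}||_{L^{1}(\omega_{0}^{n}dt)}$ approaches $\frac{1}{1+q_{0}(n+1)}$. First I would observe that both sides of the desired inequality transform compatibly under replacing $v$ by $v - \sup v$, so I may assume $v \le 0$ and invoke the corollary following Proposition \ref{p2.13} directly. Writing $A := ||(v_{0}-\varphi_{0})^{+}||_{L^{\infty}}$ and $B := ||v||_{L^{\infty}}$, the two inputs are: for any $0<\delta<1$ and any $s_{0} \ge s_{*}(\delta)$,
\[
\sup_{[0,T]\times M}(v-\varphi) \le s_{0} + C_{3}\, s_{0}^{-\mu}\, M^{\mu},
\]
valid for any fixed $\mu < \frac{1}{q_{0}(n+1)}$, and, for any $\beta > 1$,
\[
s_{*}(\delta) \le \max\bigl(2A,\; 2\delta B,\; C_{1}(\beta)\, \delta^{-k}\, M\bigr),\qquad k := \frac{q_{0}(n+1)}{1-1/\beta}.
\]

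The heart of the argument is a good choice of $\delta$. I would balance the two $\delta$-dependent terms by setting $2\delta B = C_{1}(\beta)\, \delta^{-k} M$, which gives $\delta = \bigl(C_{1}M/(2B)\bigr)^{1/(k+1)}$ and corresponding common value
\[
\tau := (2B)^{k/(k+1)} (C_{1}M)^{1/(k+1)}.
\]
Set $s_{0} := \max(2A, \tau)$ and substitute into the first displayed inequality. A short case analysis comparing $s_{0}$ with the critical point $(\mu C_{3})^{1/(\mu+1)} M^{\mu/(\mu+1)}$ of the map $s_{0}\mapsto s_{0} + C_{3} s_{0}^{-\mu} M^{\mu}$ shows that the right-hand side is at most a constant multiple of $\max\bigl(A,\tau, M^{\mu/(\mu+1)}\bigr)$. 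This yields
\[
\sup_{[0,T]\times M}(v-\varphi) \le C\,\max\bigl(A,\; M^{1/(k+1)},\; M^{\mu/(\mu+1)}\bigr).
\]
As $\beta\to\infty$, $k\to q_{0}(n+1)$ and hence $1/(k+1)\to \frac{1}{1+q_{0}(n+1)}$, while $\mu/(\mu+1) \to \frac{1}{1+q_{0}(n+1)}$ as $\mu \nearrow \frac{1}{q_{0}(n+1)}$. So for any prescribed $\alpha < \frac{1}{1+q_{0}(n+1)}$, fixing $\beta$ and $\mu$ close enough to their limits makes both exponents at least $\alpha$, producing the stated bound.

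The main obstacle I anticipate is the dependence on $B = ||v||_{L^{\infty}}$, which enters through the balancing step but is absent from the asserted list of parameters the constant $C$ depends on. I would either read the theorem as tacitly allowing $C$ to depend on the comparison function $v$ through this norm (natural since the intended application in the H\"older continuity proof uses Demailly-type approximants with a priori $L^{\infty}$ control), or try to sharpen the $B$-dependent branch of Lemma \ref{l2.15} by using that an $\omega_{0}$-psh $v$ normalized by $\sup v = 0$ has controlled mean and hence a better structural bound on its sublevel sets. A secondary technicality is ensuring $\delta \in (0,1)$: the formula above may yield $\delta \ge 1$ when $M$ is not small, but in that regime the conclusion is automatic, because $M^{\alpha}$ already exceeds the a priori $L^{\infty}$ bound on $\sup(v-\varphi)$ coming from Lemma \ref{l2.1}.
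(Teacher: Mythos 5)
Your proposal is correct and follows essentially the same route as the paper: both arguments combine (\ref{2.7NNN}) with Lemma \ref{l2.15} and then tune $\delta$, $s_0$, $\beta$, $\mu$, the only cosmetic difference being that the paper directly sets $\delta=\|(v-\varphi)^+\|_{L^1}^{1/(1+q_0(n+1))}$ (after assuming WLOG $\|(v-\varphi)^+\|_{L^1}<1$) and $s_0=\max\big(2\|v_0-\varphi_0\|_{L^{\infty}},C_4\|(v-\varphi)^+\|_{L^1}^{\alpha}\big)$ with $C_4=\max(2\|v\|_{L^{\infty}},C_1(\beta))$, rather than balancing the two $\delta$-dependent terms of Lemma \ref{l2.15} as you do. Two remarks: the $\|v\|_{L^{\infty}}$-dependence you flag is present in the paper's own constant $C_4$ as well (and is harmless in the intended application, where $v$ is a regularization of the bounded $\varphi$), so your reading is the intended one; on the other hand, your preliminary reduction ``replace $v$ by $v-\sup v$'' is not actually compatible with the statement (it lowers the left-hand side by $\sup v$, which the right-hand side does not control), but it is also unnecessary, since the corollary after Proposition \ref{p2.13} and Lemma \ref{l2.15} only use boundedness of $v$ (via $-\delta v\le \delta\|v\|_{L^{\infty}}$), which is exactly how the paper applies them to a general bounded $v$.
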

\begin{proof}
Since we know that both $v$ and $\varphi$ are bounded in $L^{\infty}$ with the said dependence,  we may assume that $||(v-\varphi)^+||_{L^1}<1$,  without loss of generality.
Take $\delta=||(v-\varphi)^+||_{L^1}^{\frac{1}{1+q_0(n+1)}}$.  We take $\beta>1$ large enough such that $1-\frac{q_0(n+1)}{1+q_0(n+1)}\frac{1}{1-\frac{1}{\beta}}>\alpha$.  Let $C_1(\beta)$ be the constant given by Lemma \ref{l2.15} with this $\beta$.  Now we define $C_4=\max(2||v||_{L^{\infty}},C_1(\beta))$.  We wish to define 
\begin{equation}\label{2.9}
s_0=\max(2||v_0-\varphi_0||_{L^{\infty}},C_4||(v-\varphi)^+||_{L^1}^{\alpha}).
\end{equation}
We would like to use (\ref{2.7NNN}) to estimate $\sup_M(v-\varphi)$ by taking $\delta$ and $s_0$ as specified above,  where we take $\mu$ so that $\alpha=\frac{\mu}{1+\mu}$.
Of  course we need to verify that $s_0\ge s_*(\delta)$.  

Once this is verified and $s_0=2||v_0-\varphi_0||_{L^{\infty}}$, in (\ref{2.9}),  namely $2||v_0-\varphi_0||_{L^{\infty}}\ge C_4||(v-\varphi)^+||_{L^1}^{\alpha}$,  we get from (\ref{2.7NNN})
\begin{equation*}
\begin{split}
&\sup_{[0,T]\times M}(v-\varphi)\le 2||v_0-\varphi_0||_{L^{\infty}}+C_3(C_4||(v-\varphi)^+||_{L^1}^{\alpha})^{-\mu}||(v-\varphi)^+||_{L^1}^{\mu}\\
&\le 2||v_0-\varphi_0||_{L^{\infty}}+C_3C_4^{-\mu}||(v-\varphi)^+||_{L^1}^{\alpha}\le (2+2C_3C_4^{-1-\mu})||v_0-\varphi_0||_{L^{\infty}}.
\end{split}
\end{equation*}
If $s_0=C_4||(v-\varphi)^+||_{L^1}^{\alpha}$,  then from (\ref{2.7NNN}),  
\begin{equation*}
\begin{split}
&\sup_M(v-\varphi)\le C_4||(v-\varphi)^+||_{L^1}^{\alpha}+C_3(C_4||(v-\varphi)^+||_{L^1}^{\alpha})^{-\mu}||(v-\varphi)^+||_{L^1}^{\mu}\\
&\le (C_4+C_3C_4^{-\mu})||(v-\varphi)^+||_{L^1}^{\alpha}.
\end{split}
\end{equation*}
So we just need to verify that $s_0\ge s_*(\delta)$,  for which Lemma \ref{l2.15} will be needed.  It is clear that we only need to show that:
\begin{equation*}
C_4||(v-\varphi)^+||_{L^1}^{\alpha}\ge \max\big(2\delta||v||_{L^{\infty}},C_1(\beta)\delta^{-\frac{q_0(n+1)}{1-\frac{1}{\beta}}}||(v-\varphi)^+||_{L^1}\big).
\end{equation*}
This is clear from our choice of $\delta$,  $C_4$ and $\beta$ made at the beginning of the proof.
\end{proof}
Using this theorem,  we can prove H\"older continuity in time.  Indeed,  we may define
\begin{equation}
\varphi_{1,\eps}(t,x)=\frac{1}{\eps}\int_{t-\eps}^t\varphi(\tau,x)d\tau.
\end{equation}
Here we have extended $\varphi(t,x)=\varphi_0(x)$ for $t<0$.  Then we have that $\partial_t\varphi_{1,\eps}\le 0$ and $\omega_0+\sqrt{-1}\partial\bar{\partial}\varphi_{1,\eps}\ge 0$.
Moreover,  because of the extension we chose,  we have that $\varphi_{1,\eps}(0,x)=\varphi_0$.  Therefore,  by taking $v=\varphi_{1,\eps}$,  we get that for any $\alpha<\frac{1}{1+q_0(n+1)}$:
\begin{equation}
\sup_{[0,T]\times M}(\varphi_{1,\eps}-\varphi)\le C||(\varphi_{1,\eps}-\varphi)^+||_{L^1}^{\alpha}.
\end{equation}
But we note that $\varphi_{1,\eps}\ge \varphi$,  we get
\begin{equation*}
\begin{split}
&||\varphi_{1,\eps}-\varphi||_{L^1}=\frac{1}{\eps}\int_0^Tdt\int_{t-\eps}^t\int_M\varphi(\tau,x)d\tau\omega_0^n-\int_{[0,T]\times M}\varphi(t,x)\omega_0^ndt\\
&=\frac{1}{\eps}\int_{-\eps}^0\int_M(\tau+\eps)\varphi(\tau,x)\omega_0^nd\tau+\frac{1}{\eps}\int_{T-\eps}^T\int_M(T-\tau)\varphi(\tau,x)\omega_0^nd\tau\\
&-\int_{T-\eps}^T\int_M\varphi(t,x)\omega_0^ndt.
\end{split}
\end{equation*}
Now it is clear that 
\begin{equation*}
||\varphi_{1,\eps}-\varphi||_{L^1}\le C\eps.
\end{equation*}
Therefore,  Theorem \ref{t2.2} implies that for any $\alpha<\frac{1}{1+q_0(n+1)}$,  we have
\begin{equation*}
\frac{1}{\eps}\int_{t-\eps}^t\varphi(\tau,x)d\tau-\varphi(t,x)\le C\eps^{\alpha},\,\,(t,x)\in[0,T]\times M.
\end{equation*}
The H\"older continuity in time is implied by the following lemma:
\begin{lem}
Let $f(t)$ be a decreasing function on $[0,T]$.  Assume that there is $C_0>0$ such that for any $\eps>0$ and any $t\in[0,T]$,  it holds:
\begin{equation*}
\frac{1}{\eps}\int_{t-\eps}^tf(\tau)d\tau-f(t)\le C_0\eps^{\alpha}.
\end{equation*}
Then we have
\begin{equation*}
\sup_{s\neq t}\frac{|f(t)-f(s)|}{|t-s|^{\alpha}}\le C.
\end{equation*}
\end{lem}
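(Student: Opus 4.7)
The plan is to subtract off the value $f(t)$ to turn the hypothesis into a one-sided $L^1$ decay statement, and then use the monotonicity of $f$ to recover a pointwise bound. Concretely, fix $s<t$ in $[0,T]$ and set $\eta=t-s>0$, and put $g(\tau)=f(\tau)-f(t)$. Since $f$ is decreasing, $g$ is decreasing, $g(t)=0$, and $g(\tau)\ge 0$ for $\tau\le t$. The hypothesis is equivalent to
\begin{equation*}
\int_{t-\eps}^{t} g(\tau)\,d\tau \le C_0\,\eps^{1+\alpha} \qquad \text{for all } \eps>0.
\end{equation*}

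Next, I would apply this with the specific window $\eps=2\eta$. Since $g$ is nonnegative and decreasing on the relevant interval, and since $s=t-\eta$ lies in $[t-2\eta,t]$, monotonicity gives $g(\tau)\ge g(s)$ for every $\tau\in[t-2\eta,s]$. Therefore
\begin{equation*}
\eta\, g(s) \;=\; (s-(t-2\eta))\, g(s) \;\le\; \int_{t-2\eta}^{s} g(\tau)\,d\tau \;\le\; \int_{t-2\eta}^{t} g(\tau)\,d\tau \;\le\; C_0\,(2\eta)^{1+\alpha}.
\end{equation*}
Dividing by $\eta$ gives $f(s)-f(t)=g(s)\le 2^{1+\alpha}C_0\,(t-s)^{\alpha}$, which is the desired Hölder bound (since $f(s)\ge f(t)$ by monotonicity, this already is $|f(s)-f(t)|$).

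The only bookkeeping issue is when the window $[t-2\eta,t]$ sticks out of $[0,T]$ on the left, i.e.\ when $s<\eta$. In the application to the paper this is already addressed by the extension $\varphi(t,x)=\varphi_0(x)$ for $t<0$, so the same extension $f(\tau)=f(0)$ for $\tau<0$ can be used and preserves monotonicity, and the hypothesis continues to hold (with an unchanged right-hand side) after the extension. Thus no essential obstacle arises; the entire argument is a clean three-line monotone-rearrangement once one chooses $\eps=2(t-s)$. I do not anticipate any genuine difficulty.
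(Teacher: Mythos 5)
Your proposal is correct and follows essentially the same route as the paper: both arguments boil down to the observation that the average of $f-f(t)$ over the window $[t-2(t-s),t]$ dominates $\tfrac12\bigl(f(s)-f(t)\bigr)$ by monotonicity, applied with $\eps=2(t-s)$. The only (harmless) difference is that you use monotonicity of $g$ directly, whereas the paper writes $f(\tau)-f(t)=\int_\tau^t(-f'(\sigma))\,d\sigma$ and exchanges the order of integration, which presupposes differentiability (automatic in the application, where $\varphi$ is smooth); your handling of the extension past $\tau<0$ also matches the paper's convention.
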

\begin{proof}
We may calculate
\begin{equation*}
\begin{split}
&C_0\eps^{\alpha}\ge \frac{1}{\eps}\int_{t-\eps}^tf(\tau)d\tau-f(t)=\frac{1}{\eps}\int_{t-\eps}^td\tau\int_{\tau}^t(-f'(s))ds\ge \frac{1}{\eps}\int_{t-\eps}^{t-\frac{\eps}{2}}d\tau\int_{\tau}^t(-f'(s))ds\\
&\ge \frac{1}{\eps}\int_{t-\eps}^{t-\frac{\eps}{2}}d\tau\int_{t-\frac{\eps}{2}}^t(-f'(s))ds=\frac{1}{2}\int_{t-\frac{\eps}{2}}^t(-f'(s))ds.
\end{split}
\end{equation*}
Therefore,  for any $t>s$,  we take $\eps=2(t-s)$,  the claimed estimate follows.
\end{proof}
Hence we obtain:
\begin{prop}
Let $\varphi$ solves (\ref{2.1NNN}) with the right hand side $e^F\in L^{p_0}(\omega_0^ndt)$ for some $p_0>1$,  then for any $\alpha<\frac{1}{1+q_0(n+1)}$,
\begin{equation*}
|\varphi(t,x)-\varphi(s,x)|\le C|t-s|^{\alpha}.
\end{equation*}
Here the constant $C$ depends only on $||e^F||_{L^{p_0}}$,  the background metric,  the dimension,  the choice of $\alpha<\frac{1}{1+q_0(n+1)}$,  and $T$.
\end{prop}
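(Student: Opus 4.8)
The plan is to obtain this as a short corollary of the stability estimate Theorem~\ref{t2.2}, applied to a time-averaged comparison function, combined with the elementary lemma on decreasing functions stated immediately above. Fix $x\in M$ and set $f(t)=\varphi(t,x)$; since $\partial_t\varphi\le0$ this $f$ is nonincreasing, so by that lemma it suffices to prove the one-sided bound
\[
\frac{1}{\eps}\int_{t-\eps}^t\varphi(\tau,x)\,d\tau-\varphi(t,x)\le C_0\,\eps^{\alpha}
\]
uniformly in $(t,x)\in[0,T]\times M$, with $C_0$ depending only on the stated data.

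To produce this bound I would introduce $\varphi_{1,\eps}(t,x)=\frac{1}{\eps}\int_{t-\eps}^t\varphi(\tau,x)\,d\tau$, extending $\varphi(\tau,x):=\varphi_0(x)$ for $\tau<0$. Averaging preserves admissibility: $\partial_t\varphi_{1,\eps}=\frac{1}{\eps}\big(\varphi(t,x)-\varphi(t-\eps,x)\big)\le0$ because $\varphi$ is nonincreasing in $t$, and $\omega_0+\sqrt{-1}\partial\bar{\partial}\varphi_{1,\eps}=\frac{1}{\eps}\int_{t-\eps}^t\big(\omega_0+\sqrt{-1}\partial\bar{\partial}\varphi(\tau,\cdot)\big)\,d\tau\ge0$ since every slice (including the extension $\varphi_0$) is $\omega_0$-psh. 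The chosen extension also forces $\varphi_{1,\eps}(0,\cdot)=\varphi_0$, so taking $v=\varphi_{1,\eps}$ in Theorem~\ref{t2.2} we have $v_0-\varphi_0\equiv0$ and obtain, for any $\alpha<\frac{1}{1+q_0(n+1)}$,
\[
\sup_{[0,T]\times M}(\varphi_{1,\eps}-\varphi)\le C\,\|(\varphi_{1,\eps}-\varphi)^+\|_{L^1}^{\alpha}.
\]

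Next I would estimate $\|\varphi_{1,\eps}-\varphi\|_{L^1([0,T]\times M)}$. Since $\varphi$ is nonincreasing in $t$ we have $\varphi_{1,\eps}\ge\varphi$, so this $L^1$ norm is $\int_{[0,T]\times M}(\varphi_{1,\eps}-\varphi)\,\omega_0^n\,dt$; exchanging the order of integration in $\frac{1}{\eps}\int_0^T\!\int_{t-\eps}^t\varphi(\tau,x)\,d\tau\,dt$ collapses the bulk term and leaves only boundary-layer contributions over time intervals of length $\eps$ near $t=0$ and $t=T$, which are $O(\eps)$ by the uniform $L^\infty$ bounds for $\varphi$ (established via Lemma~\ref{l2.8}) and for $\varphi_0$. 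Hence $\|\varphi_{1,\eps}-\varphi\|_{L^1}\le C\eps$, and feeding this back gives $\varphi_{1,\eps}(t,x)-\varphi(t,x)\le C\eps^{\alpha}$ for every $(t,x)$, which is exactly the hypothesis of the elementary lemma. Applying it to $f(t)=\varphi(t,x)$ for each fixed $x$ then yields $|\varphi(t,x)-\varphi(s,x)|\le C|t-s|^{\alpha}$ with $C$ independent of $x$.

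The only delicate points are bookkeeping rather than conceptual: checking that the $\tau<0$ extension keeps $\varphi_{1,\eps}$ admissible up to and including $t=0$ (so the $\|(v_0-\varphi_0)^+\|_{L^\infty}$ term in Theorem~\ref{t2.2} genuinely vanishes), and carrying out the $O(\eps)$ estimate for $\|\varphi_{1,\eps}-\varphi\|_{L^1}$ carefully near the two temporal endpoints. Neither is a real obstacle — all the analytic weight sits inside Theorem~\ref{t2.2}, whose proof rests on the weighted Moser--Trudinger inequality (Corollary~\ref{c2.10}) and the De Giorgi iteration (Lemma~\ref{l2.8}); once that is granted, the present Proposition follows in a few lines.
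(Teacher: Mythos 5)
Your proposal is correct and follows essentially the same route as the paper: the paper likewise sets $v=\varphi_{1,\eps}$ (the backward time average with the $\varphi_0$ extension, so $v_0=\varphi_0$ and admissibility is preserved), applies Theorem~\ref{t2.2}, bounds $\|\varphi_{1,\eps}-\varphi\|_{L^1}\le C\eps$ by swapping the order of integration, and concludes with the same elementary lemma on decreasing functions. No gaps; this matches the paper's argument step for step.
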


In order to estimate the regularity of $\varphi$ in space,  we need Demailly's approximation technique in \cite{DDGHKZ}.  This construction is a substitute for the standard mollification of a function in Euclidean spaces.  Indeed,  if $u$ is a psh function defined on a domain $\Omega\subset \mathbb{C}^n$,  then on a suitable subdomain of $\Omega$,  we may define $u_{\eps}(z)=\int_{B_{\eps}(0)}u(z-w)\frac{1}{\eps^{2n}}\rho(\frac{|w|}{\eps})dV(\omega)$.  Here $dV(\omega)$ means the standard volume form on $\mathbb{C}^n$ and $\rho$ is a smoothing kernel such that $\int_{\mathbb{C}^n}\rho(|w|)dV(\omega)=1$.
We would automatically have that $u_{\eps}$ is also psh and $u_{\eps}\rightarrow u$.

However,  things are getting subtle on a manifold and instead,  one needs to consider:
\begin{equation}\label{2.10New}
\rho_{\eps}u(x)=\int_{T_xM}u(\exp_x( \zeta))\frac{1}{\eps^{2n}}\rho(\frac{|\zeta|^2_{\omega_0}}{\eps^2})dV_{\omega_0}(\zeta),\,\,\,x\in M.
\end{equation}
Here $u$ is a psh function on $M$ and $\rho:\mathbb{R}\rightarrow \mathbb{R}^+$ is supported on $[0,1]$ such that $\int_{\mathbb{C}^n}\rho(|z|)=1$. 
However,  if we do this,  we lack good control over the lower bound of $\omega_0+\sqrt{-1}\partial\bar{\partial}\rho_{\eps}u$.  
In order to have good estimate on the lower bound of the complex Hessian,  we need to consider the Kiselman-Legendre transform,  as was done in \cite{BD} and \cite{Demailly92}:
\begin{equation*}
U_{c,\eps}(x)=\inf_{0<s\le \eps}\big(\rho_su(x)+Ks^2-K\eps^2-c\log(\frac{s}{\eps})\big).
\end{equation*}
Here $c>0$,  $K>0$.  We need the following facts:
\begin{prop}\label{p2.16}
\begin{enumerate}
\item (\cite{DDGHKZ},  Lemma 2.1)For $K$ large enough depending only on the background metric,   $s\mapsto \rho_su(x)+Ks^2$ is convex and increasing such that $\lim_{s\rightarrow 0^+}(\rho_su(x)+Ks^2)=u(x)$.
\item (\cite{DDGHKZ},  Lemma 2.1) Assume that $u$ is $\omega_0$-psh,  then there is a constant $A$ large enough depending only on the background metric,  such that
\begin{equation*}
\omega_0+\sqrt{-1}\partial\bar{\partial}U_{c,\eps}\ge -A(c+\eps^2)\omega_0.
\end{equation*}
\item (\cite{DDGHKZ},  Lemma 2.3)There is a constant $C>0$ depending only on the background metric and $||u||_{L^{\infty}}$ such that $\int_M|\rho_{\eps}u-u|\omega_0^n\le C\eps^2$
\item (\cite{BD},  in the proof of Lemma 1.12 and in particular,  equation (1.16)) For some $c>0$,  $C>0$,  $\eps_0>0$ depending only on the background metric,  it holds that: for any $z\in M$,  if we choose normal coordinates at $z$ (meaning $(\omega_0)_{i\bar{j}}(z)=\delta_{ij}$,  $\nabla g_{i\bar{j}}(z)=0$),  then for any $\eps<\eps_0$
\begin{equation*}
\rho_{\eps}u(z)-u(z)\ge \frac{c}{\eps^{2n-2}}\int_{B(z,\frac{\eps}{2})}\Delta u(z')dV(z')-C\eps^2.
\end{equation*}
Here $\Delta u=\sum_{i=1}^n\frac{\partial^2u}{\partial z_i\partial\bar{z}_i}$ and $B(z,\frac{\eps}{2})$ is a ball under the normal coordinates.
\end{enumerate}
\end{prop}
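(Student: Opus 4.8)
The four items of Proposition~\ref{p2.16} are quoted from \cite{DDGHKZ}, \cite{BD} and \cite{Demailly92}, so the plan is not to produce a new argument but to recall the mechanism behind each and to check that the constants $K$, $A$, $C$, $c$, $\eps_0$ depend only on the background metric (and, in item~(3), additionally on $\|u\|_{L^{\infty}}$). For item~(1), after the rescaling $\zeta=s\xi$ one has $\rho_su(x)=\int_{|\xi|_{\omega_0}\le 1}u(\exp_x(s\xi))\,\rho(|\xi|_{\omega_0}^2)\,dV_{\omega_0}(\xi)$; differentiating twice in $s$ produces the real Hessian of $u\circ\exp_x$ paired with $\xi\otimes\xi$, and since $\rho$ is radial this can be replaced, up to curvature terms coming from $\exp_x$ and the metric, by the complex Laplacian of $u$, which is bounded below by $-C$ because $u$ is $\omega_0$-psh. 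Hence $\partial_s^2(\rho_su(x))\ge -2K$ for some $K=K(\omega_0)$, giving convexity of $s\mapsto\rho_su(x)+Ks^2$; monotonicity follows from the same computation (in the flat model $\rho_su(x)$ is already nondecreasing, being a nonnegative mixture of sphere averages of the subharmonic function $u$), and $\lim_{s\to 0^+}(\rho_su(x)+Ks^2)=u(x)$ comes from combining the upper semicontinuity of $u$ with the sub-mean-value lower bound of item~(4).

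For item~(2), the point not to gloss over is that an infimum of $\omega_0$-psh functions is in general \emph{not} $\omega_0$-psh; the construction is arranged precisely so that Kiselman's minimum principle applies. Writing $s=\eps e^{\sigma}$ with $\sigma\le 0$, item~(1) shows that $h(x,\sigma):=\rho_{\eps e^{\sigma}}u(x)+K\eps^2e^{2\sigma}-K\eps^2$ is convex in $\sigma$ and $\omega_0$-psh in $x$ up to a defect $\le C\eps^2\omega_0$ in its complex Hessian, while $-c\sigma$ is linear in $\sigma$; the minimum principle then gives that $U_{c,\eps}(x)=\inf_{\sigma\le 0}\big(h(x,\sigma)-c\sigma\big)$ is $\omega_0$-psh in $x$ up to a defect of the claimed form $-A(c+\eps^2)\omega_0$, the $\eps^2$-part being the curvature correction and the $c$-part coming from the $\log$-term. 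Here I would simply reproduce the computation of \cite[Lemma~2.1]{DDGHKZ}, tracking that $A$ depends only on $(M,\omega_0)$.

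Items~(3) and~(4) are the quantitative convergence statements, and~(4) carries the real weight. In normal coordinates at $z$ one uses $\exp_z(\zeta)=z+\zeta+O(|\zeta|^3)$ and $\det d\exp_z=1+O(|\zeta|^2)$ to reduce $\rho_{\eps}u(z)-u(z)$ to the flat mollification difference $\int_{B_{\eps}}\big(u(z+w)-u(z)\big)\eps^{-2n}\rho(|w|^2/\eps^2)\,dV(w)$, the errors being $O(\eps^2)$ once $\Delta u\ge -C$ near $z$ is used. In $\mathbb{C}^n$ this flat difference equals $\int\Delta u(z')\,G_{\eps}(z-z')\,dV(z')$ for a nonnegative radial kernel $G_{\eps}$ supported in $B_{\eps}$ with $G_{\eps}\ge c\,\eps^{-(2n-2)}$ on $B_{\eps/2}$ and $\int G_{\eps}=O(\eps^2)$; splitting the integral over $B(z,\eps/2)$ and its complement and using $\Delta u\ge -C$ on the complement yields exactly the inequality of item~(4), as in \cite[(1.16)]{BD}. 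Item~(3) then follows: (4) already gives $\rho_{\eps}u-u\ge -C\eps^2$ pointwise, while $\int_M(\rho_{\eps}u-u)\,\omega_0^n\le C\eps^2$ because Fubini and a volume comparison for the exponential map give $\int_M\rho_{\eps}u\,\omega_0^n=\int_M u\,(1+O(\eps^2))\,\omega_0^n$; hence $\int_M|\rho_{\eps}u-u|\,\omega_0^n\le\int_M(\rho_{\eps}u-u)\,\omega_0^n+2\int_M(\rho_{\eps}u-u)^{-}\,\omega_0^n\le C\eps^2$, the dependence on $\|u\|_{L^{\infty}}$ entering through $\|u\|_{L^1}$ in the first term.

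The obstacle here is organizational rather than conceptual: no new idea is needed beyond the cited papers, and the work consists in bookkeeping the curvature error terms on $M$ so that every ``$O(\eps^2)$'' is genuinely uniform in $z$ and over the relevant range of $s$, and in verifying that $K$, $A$, $C$, $c$, $\eps_0$ depend only on $(M,\omega_0)$ (and $\|u\|_{L^{\infty}}$ in item~(3)). The one genuinely nontrivial ingredient is Kiselman's minimum principle in item~(2), which is exactly the reason the Kiselman--Legendre transform, rather than the plain regularization $\rho_{\eps}u$, is used to obtain a lower bound for the complex Hessian.
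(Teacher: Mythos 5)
Your proposal is correct and matches the paper's treatment: the paper gives no proof of Proposition \ref{p2.16} at all, stating it purely as a quotation of Lemma 2.1 and Lemma 2.3 of \cite{DDGHKZ} and of equation (1.16) in the proof of Lemma 1.12 of \cite{BD}, which is exactly the route you take. Your sketch of the underlying mechanisms (Kiselman's minimum principle for item (2), the radial Riesz-kernel representation of $\rho_{\eps}u-u$ for item (4), and Fubini plus the pointwise lower bound for item (3)) is a faithful reconstruction of the cited arguments with the constant dependences tracked correctly, so there is nothing to flag beyond the fact that, like the paper, it ultimately defers the detailed computations to the references.
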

\begin{rem}\label{r2.17}
The point (4) of the above Proposition can be reformulated as:

There exists $\eps_0>0$,  such that for $0<\eps<\eps_0$,  and any coordinate chart $U$,  we have that
\begin{equation*}
\rho_{\eps}u-u\ge \frac{c}{\eps^{2n-2}}\int_{B(z,\frac{\eps}{2})}\Delta u(z')dV(z')-C\eps^2
\end{equation*}
holds for any $z\in V\subset \subset U$.
Here $\Delta=\sum_{i=1}^n\frac{\partial^2u}{\partial z_i\partial\bar{z}_i}$.  Indeed,  by choosing a local potential $\rho_0$ with $\omega_0=\sqrt{-1}\partial\bar{\partial}\rho_0$ such that $\sqrt{-1}\partial\bar{\partial}(u+\rho_0)\ge 0$,  the $\Delta(u+\rho_0)$ operator calculated under different coordinates will bound each other by a positive multiple.  So point (4) will still hold,  possibly with a different choice of the constants (but still only depend on the background manifold and metric).
\end{rem}
Let $\varphi(t,x)$ be the solution to (\ref{2.1NNN}),  we wish to apply the above approximation to $\varphi(t,x)$ for each $t\in[0,T]$.

Indeed,  let $\gamma=\bar{\alpha}$ if $\bar{\alpha}<\frac{1}{1+q_0(n+1)}$ and $\gamma<\frac{1}{1+q_0(n+1)}$ if $\bar{\alpha}\le \frac{1}{1+q_0(n+1)}$,  we define
\begin{equation}\label{2.13NNew}
\varphi_{\eps}(t,x)=\inf_{0<s\le \eps}\big(\rho_{s}\varphi(t,x)+Ks^2-K\eps^2-\eps^{\gamma}\log(\frac{s}{\eps})\big).
\end{equation}
Then from Proposition \ref{p2.16},  we have that for $0<\eps<1$ and $A>0$ depending only on the background metric:
\begin{equation*}
\omega_0+\sqrt{-1}\partial\bar{\partial}\tilde{\varphi}_{\eps}\ge -A(\eps^{\gamma}+\eps^2)\omega_0\ge -2A\eps^{\gamma}\omega_0.
\end{equation*}

Next we will choose $\delta=4A\eps^{\gamma}$,  then we have $\omega_0+\sqrt{-1}\partial\bar{\partial}\varphi_{\eps}\ge -\frac{\delta}{2}\omega_0$,  hence it follows from (\ref{2.7NNN}) that for any $\mu<\frac{1}{1+q_0(n+1)}$:
\begin{equation}\label{2.14NNew}
\sup_{[0,T]\times M}(\varphi_{\eps}-\varphi)\le \inf_{s_0\ge s_*(\delta)}\big(s_0+C_3s_0^{-\mu}||(\varphi_{\eps}-\varphi)^+||_{L^1}^{\mu}\big).
\end{equation}
We have the following estimate on $||(\varphi_{\eps}-\varphi)^+||_{L^1}$:
\begin{lem}
Let $\varphi_{\eps}$ be defined by (\ref{2.13NNew}),  then we have
\begin{equation*}
||(\varphi_{\eps}-\varphi)^+||_{L^1}\le C\eps^2.
\end{equation*}
Here $C$ depends on the background metric and $T$.
\end{lem}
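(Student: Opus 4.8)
The plan is to exploit that $\varphi_\eps(t,x)$ is defined as an infimum over $s\in(0,\eps]$, so to bound it from above it suffices to test the infimum at the single value $s=\eps$. At $s=\eps$ both correction terms in (\ref{2.13NNew}) vanish, $K\eps^2-K\eps^2=0$ and $\eps^\gamma\log(\eps/\eps)=0$, so
\[
\varphi_\eps(t,x)\le \rho_\eps\varphi(t,x),\qquad (t,x)\in[0,T]\times M .
\]
Consequently $\varphi_\eps-\varphi\le \rho_\eps\varphi-\varphi$ pointwise, and hence $(\varphi_\eps-\varphi)^+\le (\rho_\eps\varphi-\varphi)^+\le |\rho_\eps\varphi-\varphi|$, which reduces everything to estimating the $L^1$ distance between $\varphi$ and its single mollification $\rho_\eps\varphi$.

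Next I would work slice by slice in $t$. For each fixed $t\in[0,T]$ the function $\varphi(t,\cdot)$ is $\omega_0$-psh (and bounded by the a priori $L^\infty$ estimate obtained earlier via Lemma \ref{l2.7New} and Lemma \ref{l2.8}), so Proposition \ref{p2.16}(3) applies with $u=\varphi(t,\cdot)$ and gives
\[
\int_M|\rho_\eps\varphi(t,\cdot)-\varphi(t,\cdot)|\,\omega_0^n\le C\eps^2 ,
\]
where the constant $C$ depends only on the background metric and on $\|\varphi(t,\cdot)\|_{L^\infty}$. Since $\|\varphi(t,\cdot)\|_{L^\infty}\le \|\varphi\|_{L^\infty([0,T]\times M)}$ is bounded uniformly in $t$ with the dependence asserted in the $L^\infty$ estimates, the same constant $C$ works for every $t\in[0,T]$.

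Finally I would integrate this slicewise bound over $t\in[0,T]$:
\[
\|(\varphi_\eps-\varphi)^+\|_{L^1([0,T]\times M,\,\omega_0^n dt)}\le \int_0^T\!\!\int_M|\rho_\eps\varphi-\varphi|\,\omega_0^n\,dt\le CT\eps^2 ,
\]
which is the claimed estimate after absorbing $T$ into the constant. There is no substantial obstacle here; the only points needing care are the one-sided comparison $\varphi_\eps\le\rho_\eps\varphi$ (making sure the $-K\eps^2$ and $-\eps^\gamma\log(s/\eps)$ terms do not interfere — they do not, since they vanish at $s=\eps$) and the uniformity in $t$ of the constant in Proposition \ref{p2.16}(3), which is precisely where the already-established uniform $L^\infty$ bound on $\varphi$ is used.
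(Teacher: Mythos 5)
Your proof is correct and is essentially the paper's own argument: test the infimum at $s=\eps$ to get $\varphi_{\eps}\le\rho_{\eps}\varphi$, apply Proposition \ref{p2.16}(3) on each time slice, and integrate in $t$ to pick up the factor $T$. The only addition is your explicit remark that the uniform $L^{\infty}$ bound on $\varphi$ makes the slicewise constant uniform in $t$, which the paper leaves implicit but which is consistent with its stated dependence.
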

\begin{proof}
We note that,  by taking $s=\eps$ in (\ref{2.13NNew}),  we have that
\begin{equation*}
\varphi_{\eps}(t,x)\le \rho_{\eps}\varphi(t,x).
\end{equation*}
Therefore,  if we use point (3) of Proposition \ref{p2.16},  we get that
\begin{equation*}
\int_M|\rho_{\eps}\varphi-\varphi|(t,x)\omega_0^n\le C\eps^2.
\end{equation*}
Here $C$ depends only on the background metric.  Hence if we integrate in $t$,  we get that
\begin{equation*}
\int_{[0,T]\times M}|\rho_{\eps}-\rho|\omega_0^ndt\le CT\eps^2.
\end{equation*}
\end{proof}
Hence (\ref{2.14NNew}) implies:
\begin{equation}\label{2.15NNN}
\sup_{[0,T]\times M}(\varphi_{\eps}-\varphi)\le \inf_{s_0\ge s_*(\delta)}\big(s_0+C_4s_0^{-\mu}\eps^{2\mu}\big).
\end{equation}
We need to plug in a suitable value of $s_0$,  hence we need to estimate $s_*(\delta)$ using Lemma \ref{l2.15}.
With our choice of $\delta=4A\eps^{\gamma}$,  we get from Lemma \ref{l2.15} that:
\begin{equation}\label{2.15New}
s_*(\delta)\le \max\big(2||(\varphi_{\eps,0}-\varphi_0)^+||_{L^{\infty}},8A\eps^{\gamma}||\varphi_{\eps}||_{L^{\infty}},C_2(\beta)\eps^{2-\frac{2\gamma q_0(n+1)}{1-\frac{1}{\beta}}}\big).
\end{equation}
Since $\varphi_0\in C^{\bar{\alpha}}$,  we get that
\begin{equation}\label{2.16New}
\varphi_{\eps,0}-\varphi_0\le \rho_{\eps}\varphi_0-\varphi_0\le C\eps^{\bar{\alpha}}\le C\eps^{\gamma}.
\end{equation}
Moreover,  since $\varphi\in L^{\infty}$,  we would get
\begin{equation}\label{2.17New}
||\varphi_{\eps}||_{L^{\infty}}\le ||\varphi||_{L^{\infty}}+K\eps^2\le ||\varphi||_{L^{\infty}}+1.
\end{equation}
Finally,  since $\gamma<\frac{1}{1+q_0(n+1)}$,  we see that,  if we choose $\beta>1$ large enough,  we may secure that $2-\frac{2\gamma q_0(n+1)}{1-\frac{1}{\beta}}>\gamma$.  Hence if we combine (\ref{2.15New})-(\ref{2.17New}),  we get that
\begin{equation}
s_*(\delta)\le C_5\eps^{\gamma}.
\end{equation}
Here $C_5$ depends on $||\varphi||_{L^{\infty}}$,  $T$,  the background metric,  the $C^{\bar{\alpha}}$ norm of $\varphi_0$.

Hence if we use (\ref{2.15NNN}),  and we take $s_*(\delta)=C_5\eps^{\gamma}$,  we obtain that:
\begin{equation}
\sup_{[0,T]\times M}(\varphi_{\eps}-\varphi)\le C_5\eps^{\gamma}+C_4(C_5\eps^{\gamma})^{-\mu}\eps^{2\mu}.
\end{equation}
Note that $\gamma<\frac{2}{1+q_0(n+1)}$,  we see that if we take $\mu$ sufficiently close to $\frac{1}{1+q_0(n+1)}$,  one can make $2\mu-\gamma \mu>\gamma$.
Therefore,  we get 
\begin{lem}\label{l2.21}
Define $\varphi_{\eps}$ according to (\ref{2.13NNew}).  Then we have,  for any choice of $\gamma$ such that $\gamma\le \bar{\alpha}$ and $\gamma<\frac{1}{1+q_0(n+1)}$,  we have:
\begin{equation*}
\sup_{[0,T]\times M}(\varphi_{\eps}-\varphi)\le C\eps^{\gamma}.
\end{equation*}
Here $C$ depends on the choice of $\gamma<\frac{1}{1+q_0(n+1)}$,  the $C^{\bar{\alpha}}$ norm of $\varphi_0$,  $T$ and the background metric.
\end{lem}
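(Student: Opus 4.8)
The plan is to assemble the estimates already built in this subsection and to fix the three free parameters $\delta,\beta,\mu$. The regularization $\varphi_\eps$ of (\ref{2.13NNew}) is exactly the Kiselman--Legendre transform $U_{c,\eps}$ of $\varphi(t,\cdot)$ with $c=\eps^\gamma$, applied fibrewise in $t$, so it inherits two structural properties at once. Since $\varphi(t,\cdot)$ is nonincreasing in $t$ and $\rho_s$ is an averaging operator, the function $t\mapsto\rho_s\varphi(t,x)+Ks^2-K\eps^2-\eps^\gamma\log(s/\eps)$ is nonincreasing in $t$ for each fixed $s$, hence so is its infimum over $s$; thus $\partial_t\varphi_\eps\le0$. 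And Proposition \ref{p2.16}(2) with $c=\eps^\gamma$ gives $\omega_0+\sqrt{-1}\partial\bar\partial\varphi_\eps\ge-A(\eps^\gamma+\eps^2)\omega_0\ge-2A\eps^\gamma\omega_0$. Setting $\delta=4A\eps^\gamma$ puts $\varphi_\eps$ in the scope of the stability inequality (\ref{2.7NNN}) (equivalently Theorem \ref{t2.2}), which gives (\ref{2.14NNew}).

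Next I would establish $||(\varphi_\eps-\varphi)^+||_{L^1}\le C\eps^2$: taking $s=\eps$ in (\ref{2.13NNew}) yields $\varphi_\eps\le\rho_\eps\varphi$ pointwise, and Proposition \ref{p2.16}(3) bounds $\int_M|\rho_\eps\varphi-\varphi|\,\omega_0^n$ by $C\eps^2$ for each $t$; integrating over $t\in[0,T]$ gives the claim. Combined with (\ref{2.14NNew}) this is (\ref{2.15NNN}). It then remains to control $s_*(\delta)$ with $\delta=4A\eps^\gamma$ through Lemma \ref{l2.15}, bounding the three terms of its maximum separately: the first by $\rho_\eps\varphi_0-\varphi_0\le C\eps^{\bar\alpha}\le C\eps^\gamma$, using $\varphi_0\in C^{\bar\alpha}$ and $\gamma\le\bar\alpha$; the second by $8A\eps^\gamma||\varphi_\eps||_{L^\infty}\le 8A(||\varphi||_{L^\infty}+1)\eps^\gamma$ via (\ref{2.17New}); and the third, $C_1(\beta)\delta^{-q_0(n+1)/(1-1/\beta)}||(\varphi_\eps-\varphi)^+||_{L^1}$, by $C\eps^{2-\gamma q_0(n+1)/(1-1/\beta)}$. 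Since $\gamma<\frac1{1+q_0(n+1)}$, this last exponent tends to $2-\gamma q_0(n+1)>\gamma$ as $\beta\to\infty$, so for $\beta$ large it too is $\le C\eps^\gamma$. Combining (\ref{2.15New})--(\ref{2.17New}) gives $s_*(\delta)\le C_5\eps^\gamma$.

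Finally I feed $s_0=C_5\eps^\gamma$ into (\ref{2.15NNN}):
\begin{equation*}
\sup_{[0,T]\times M}(\varphi_\eps-\varphi)\le C_5\eps^\gamma+C_4(C_5\eps^\gamma)^{-\mu}\eps^{2\mu}=C_5\eps^\gamma+C_4C_5^{-\mu}\eps^{\mu(2-\gamma)},
\end{equation*}
and choose an admissible $\mu$ with $\mu(2-\gamma)\ge\gamma$, for instance $\mu=\frac{\gamma}{2-\gamma}$; this is admissible because even at $\gamma=\frac1{1+q_0(n+1)}$ it equals $\frac1{1+2q_0(n+1)}$, comfortably below the threshold, so a fortiori for all $\gamma<\frac1{1+q_0(n+1)}$. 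Then $\eps^{\mu(2-\gamma)}=\eps^\gamma$, and for $\eps<1$ the right-hand side collapses to $\le C\eps^\gamma$, which is the assertion.

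I do not expect a genuine obstacle here: all the analytic input --- the weighted Moser--Trudinger estimate of Corollary \ref{c2.10}, the De Giorgi iteration of Lemma \ref{l2.8}, the stability inequality (\ref{2.7NNN}), and Demailly's regularization of Proposition \ref{p2.16} --- is already in place. The one point needing care is the coupled choice of $\delta\sim\eps^\gamma$, then $\beta$, then $\mu$, together with the verification that the hypothesis $\gamma<\frac1{1+q_0(n+1)}$ leaves simultaneous room for all three; this is the quantitative crux of the argument, but it amounts only to elementary arithmetic with the exponents.
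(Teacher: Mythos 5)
Your proposal is correct and follows essentially the same route as the paper: Demailly--Kiselman regularization, Proposition \ref{p2.16}(2)--(3), the choice $\delta=4A\eps^{\gamma}$ feeding into the stability inequality (\ref{2.7NNN}), the bound $\|(\varphi_{\eps}-\varphi)^+\|_{L^1}\le C\eps^2$, the estimate of $s_*(\delta)$ via Lemma \ref{l2.15} with $\beta$ large, and finally a choice of $\mu$ making $\mu(2-\gamma)\ge\gamma$. Your explicit verification that $\partial_t\varphi_{\eps}\le 0$ (needed to invoke the stability result, left implicit in the paper) and the concrete choice $\mu=\gamma/(2-\gamma)$ are fine refinements, not departures.
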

Using (\ref{2.13NNew}),  we get that:
\begin{lem}
There exists $0<\theta<1$,  such that
\begin{equation*}
\rho_{\theta\eps}\varphi(t,x)-\varphi(t,x)\le C\eps^{\gamma}.
\end{equation*}
Here $\theta$,  $C$ have the same dependence as in Lemma \ref{l2.21}.
\end{lem}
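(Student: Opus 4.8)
The plan is to read the desired inequality off directly from the structure of the Kiselman--Legendre transform (\ref{2.13NNew}), using the logarithmic barrier term to confine the relevant range of $s$ to scales comparable to $\eps$. Fix $(t,x)\in[0,T]\times M$ and abbreviate $g(s)=\rho_s\varphi(t,x)+Ks^2$, where $K$ is the constant appearing in (\ref{2.13NNew}) (so that Proposition \ref{p2.16}(1) applies). Then $g$ is increasing on $(0,\eps]$ with $g(0^+)=\varphi(t,x)$; in particular $g(s)\ge\varphi(t,x)$ for all $s\in(0,\eps]$, and for $\theta\eps\le s\le\eps$ one has $g(s)\ge g(\theta\eps)=\rho_{\theta\eps}\varphi(t,x)+K\theta^2\eps^2$. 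Let $C_0$ denote the constant from Lemma \ref{l2.21}, so that $\varphi_\eps(t,x)\le\varphi(t,x)+C_0\eps^\gamma$, and fix once and for all $\theta\in(0,1)$ with $\log(1/\theta)\ge C_0+K+1$; this $\theta$ is independent of $(t,x)$ and of $\eps$ and has the dependence claimed in the statement.

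First I discard the small scales. For $0<s\le\theta\eps$ one has $-\eps^\gamma\log(s/\eps)=\eps^\gamma\log(\eps/s)\ge\eps^\gamma\log(1/\theta)$, so, using $g(s)\ge\varphi(t,x)$ and (for $\eps$ small) $\eps^2\le\eps^\gamma$,
\begin{equation*}
\rho_s\varphi(t,x)+Ks^2-K\eps^2-\eps^\gamma\log(s/\eps)\ \ge\ \varphi(t,x)+\big(\log(1/\theta)-K\big)\eps^\gamma\ \ge\ \varphi(t,x)+(C_0+1)\eps^\gamma .
\end{equation*}
Hence the infimum over $0<s\le\theta\eps$ of the bracket in (\ref{2.13NNew}) is strictly larger than $\varphi(t,x)+C_0\eps^\gamma\ge\varphi_\eps(t,x)$, and therefore the infimum defining $\varphi_\eps(t,x)$ is attained over $s\in[\theta\eps,\eps]$.

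On this remaining range, $-\eps^\gamma\log(s/\eps)\ge0$ and $g(s)\ge\rho_{\theta\eps}\varphi(t,x)+K\theta^2\eps^2$, so every bracket is $\ge\rho_{\theta\eps}\varphi(t,x)-K\eps^2$. Consequently $\varphi_\eps(t,x)\ge\rho_{\theta\eps}\varphi(t,x)-K\eps^2$, and combining this with $\varphi_\eps(t,x)\le\varphi(t,x)+C_0\eps^\gamma$ from Lemma \ref{l2.21} and with $\eps^2\le\eps^\gamma$ yields $\rho_{\theta\eps}\varphi(t,x)-\varphi(t,x)\le(C_0+K)\eps^\gamma$. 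For $\eps$ outside the small range used above the bound is immediate from $|\varphi|\le\|\varphi\|_{L^\infty}$, so the estimate holds on all of $[0,T]\times M$.

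The argument is essentially bookkeeping with the three competing terms in (\ref{2.13NNew}); the only point needing attention — and the step I would isolate most carefully — is that $\theta$ must be fixed \emph{uniformly} in $(t,x)$ and $\eps$, which dictates the order in which the constants are chosen: first $C_0$ from Lemma \ref{l2.21} and the universal $K$, then $\theta$, and only afterwards the two-sided estimate above. I do not expect a genuine analytic difficulty beyond this.
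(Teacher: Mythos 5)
Your proposal is correct and follows essentially the same route as the paper: split the infimum in (\ref{2.13NNew}) at the scale $s=\theta\eps$, use the monotonicity of $s\mapsto\rho_s\varphi+Ks^2$ from Proposition \ref{p2.16}(1) to lower-bound the two ranges by $\varphi+(\log(1/\theta)-K)\eps^{\gamma}$ and $\rho_{\theta\eps}\varphi-K\eps^2$ respectively, and choose $\theta$ once and for all with $\log(1/\theta)$ exceeding the constant of Lemma \ref{l2.21} plus $K$, so that the bound of Lemma \ref{l2.21} forces the second alternative. Your bookkeeping (locating the infimum on $[\theta\eps,\eps]$ rather than taking a minimum of two lower bounds) and the explicit remark that $\theta$ is uniform in $(t,x)$ and $\eps$ are only cosmetic refinements of the paper's argument.
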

\begin{proof}
Let us go back to (\ref{2.13NNew}) and we consider two cases: $0<s\le \theta \eps$ and $\theta \eps\le s\le 1$,  where $0<\theta<1$ to be chosen.

First,  if $0<s\le \theta \eps$,  we see that,  since $s\mapsto \rho_s\varphi(t,x)+Ks^2$ is monotone increasing and tends to $\varphi(t,x)$ as $s\rightarrow 0$,  we obtain that
\begin{equation*}
\rho_s\varphi+Ks^2-K\eps^2-\eps^{\gamma}\log(\frac{s}{\eps})\ge \varphi(t,x)-K\eps^2+\eps^{\gamma}\log(\frac{1}{\theta}).
\end{equation*}

If $\theta\eps\le s\le \eps$,  then we have:
\begin{equation*}
\rho_s\varphi+Ks^2-K\eps^2-\eps^{\gamma}\log(\frac{s}{\eps})\ge \rho_{\theta\eps}\varphi+K(\theta \eps)^2-K\eps^2\ge \rho_{\theta\eps}\varphi-K\eps^2.
\end{equation*}
That is,  from Lemma \ref{l2.21}:
\begin{equation*}
C\eps^{\gamma}\ge \varphi_{\eps}-\varphi\ge \min\big(-K\eps^2+\eps^{\gamma}\log(\frac{1}{\theta}),\rho_{\theta\eps}\varphi-\varphi-K\eps^2\big).
\end{equation*}
Now we wish to choose $\theta>0$ small enough so that $\log(\frac{1}{\theta})-K>C$,  so that 
\begin{equation*}
-K\eps^2+\eps^{\gamma}\log(\frac{1}{\theta})\ge \eps^{\gamma}(\log(\frac{1}{\theta})-K)>C\eps^{\gamma}.
\end{equation*}
Hence with this choice of $\theta$ we get
\begin{equation*}
\rho_{\theta\eps}\varphi-\varphi\le K\eps^2+C\eps^{\gamma}\le (C+K)\eps^{\gamma}.
\end{equation*}
\end{proof}
Let $U$ be an open neighborhood of $M$ which is bi-holomorphic to a domain in $\mathbb{C}^n$.  Denote the local coordinate on $U$ to be $(z_1,z_2,\cdots,z_n)$.  Let $V\subset \subset U$,  then from point (4) of Proposition \ref{p2.16} and Remark \ref{r2.17},  we see that,  there is $\eps_0>0$,  such that for any $0<\eps<\eps_0$ and any $z\in V$,  we would have:
\begin{equation*}
C\eps^{\gamma}\ge \rho_{\theta\eps}\varphi-\varphi\ge \frac{c}{(\theta\eps)^{2n-2}}\int_{B(z,\frac{\eps}{2})}\Delta\varphi(t,z')dV(z')-C\eps^2.
\end{equation*}
Then the desired H\"older continuity follows from the following elementary result:
\begin{lem}
Let $u$ be a bounded function defined on $\Omega\subset \mathbb{R}^n$ and $\Omega'\subset \subset \Omega$.  Assume that there exists $r_0>0$,  $C_0>0$,  $0<\alpha<1$,  such that for any $z\in \Omega'$,  any $0<r<r_0$,  we have
\begin{equation*}
\int_{B(z,r)}|\Delta u|(z')dV(z')\le C_0r^{n-2+\alpha}.
\end{equation*}
Then 
\begin{equation*}
\sup_{x,y\in\Omega'}|u(x)-u(y)|\le C|x-y|^{\alpha}.
\end{equation*}
\end{lem}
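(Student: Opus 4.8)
The plan is to represent $u$ locally as the Newtonian potential of $\Delta u$ plus a harmonic correction, and to read off the H\"older modulus from a dyadic estimate of that potential; the one delicate point is to arrange the estimate so that every mass bound invoked sits on a ball \emph{centered at one of the two points being compared} (both of which lie in $\Omega'$), which is exactly what prevents the constant from deteriorating as the points approach $\partial\Omega'$ and gives the estimate on all of $\Omega'$ rather than on a compactly contained subset. Fix $R=\tfrac1{10}\min\big(r_0,\dist(\Omega',\partial\Omega)\big)$ and let $x_1,x_2\in\Omega'$, $\delta:=|x_1-x_2|$. If $\delta\ge R$ then $|u(x_1)-u(x_2)|\le 2\|u\|_{L^\infty}\le 2R^{-\alpha}\|u\|_{L^\infty}\,\delta^\alpha$, so we may assume $\delta<R$. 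On $B:=B(x_1,2R)\subset\Omega$ I would write $u=w+h$, where
\[
w(x)=\int_{B}\Phi(x-y)\,\Delta u(y)\,dy,\qquad h:=u-w,
\]
$\Phi$ being the fundamental solution of $\Delta$ on $\bR^n$ (so $\Phi(x)=c_n|x|^{2-n}$ for $n\ge 3$, $\Phi(x)=c_2\log|x|$ for $n=2$, and $\Delta u$ interpreted as a signed measure of total variation $|\Delta u|(B(z,r))\le C_0r^{n-2+\alpha}$ for $z\in\Omega'$). Since $\Delta w=\Delta u$ on $B$ distributionally, $h$ is harmonic on $B$ by Weyl's lemma.

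For the harmonic term one has $\int_B|w|\le\int_B|\Delta u|(y)\big(\int_B|\Phi(x-y)|\,dx\big)\,dy\le C(n,R)\int_B|\Delta u|\le C(n,R)\,C_0R^{n-2+\alpha}$, where the last step applies the hypothesis on the single ball $B(x_1,2R)$ — legitimate since $x_1\in\Omega'$ and $2R<r_0$. Hence $\|h\|_{L^1(B)}\le C(\|u\|_{L^\infty},C_0,n,R)$, and the interior gradient estimate for harmonic functions, $\|\nabla h\|_{L^\infty(B(x_1,R))}\le C R^{-n-1}\|h\|_{L^1(B)}$, gives $|h(x_1)-h(x_2)|\le C\delta\le C R^{1-\alpha}\delta^\alpha$.

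For the potential term I would split $B$ into the near region $\{|y-x_1|<2\delta\}$ and its complement. On the near region, bound $|\Phi(x_1-y)-\Phi(x_2-y)|\le|\Phi(x_1-y)|+|\Phi(x_2-y)|$; since $B(x_1,2\delta)\subset B(x_2,3\delta)$, a layer-cake computation using the mass bounds $|\Delta u|(B(x_i,t))\le C_0t^{n-2+\alpha}$ for $t\le 3\delta<r_0$ produces a contribution $\le \tfrac{C_nC_0}{\alpha}\delta^\alpha$; here $\alpha>0$ is precisely what makes the kernel integrable at the diagonal, and for $n=2$ one normalizes the logarithm by $\delta$ to see no spurious $\log(1/\delta)$ appears. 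On the far region $2\delta\le|y-x_1|$ the segment $[x_1,x_2]$ stays at distance $\ge\tfrac12|y-x_1|$ from $y$, so $|\Phi(x_1-y)-\Phi(x_2-y)|\le C_n\delta\,|y-x_1|^{1-n}$; summing over dyadic annuli $\{2^j\!\cdot\!2\delta\le|y-x_1|<2^{j+1}\!\cdot\!2\delta\}$ with $0\le j\lesssim\log(R/\delta)$, the $j$-th annulus contributes $\le C_0\,2^{j(\alpha-1)}\delta^\alpha$, and $\sum_{j\ge0}2^{j(\alpha-1)}<\infty$ exactly because $\alpha<1$. Thus $|w(x_1)-w(x_2)|\le C(n,\alpha,C_0)\delta^\alpha$, and adding the harmonic contribution yields $|u(x_1)-u(x_2)|\le C\delta^\alpha$ for all $x_1,x_2\in\Omega'$, with $C$ depending only on $n$, $\alpha$, $C_0$, $r_0$, $\dist(\Omega',\partial\Omega)$ and $\|u\|_{L^\infty}$.

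The main obstacle is more bookkeeping than substance: one must verify that each time the hypothesis is used the ball is centered at $x_1$ or at $x_2$ — never at the auxiliary center of $B$ nor at an intermediate point of the segment, where the mass bound is not assumed — so that the estimate survives uniformly up to $\partial\Omega'$; and one must keep track of the two places where the exponent enters, namely $\alpha>0$ in the near-diagonal integrability of $\Phi$ and $\alpha<1$ in the convergence of the geometric tail on the far region.
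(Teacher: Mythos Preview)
Your proof is correct and follows the same strategy as the paper's: decompose $u$ into a Newtonian potential of $\Delta u$ plus a harmonic remainder, then split the potential difference into near and far regions controlled by the mass hypothesis via a layer-cake/dyadic computation. The only differences are cosmetic---you localize to a ball $B(x_1,2R)$ and sum over dyadic annuli centered at $x_1$, whereas the paper works on a fixed intermediate domain $\Omega_1$ and centers the far-region annuli at the midpoint $x_*=\tfrac12(x_1+x_2)$; your insistence on centering every ball at $x_1$ or $x_2\in\Omega'$ is in fact slightly cleaner, since the paper's use of $x_*$ tacitly assumes $x_*\in\Omega'$.
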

Here the constant $C$ depends on dimension,  $r_0$,  $dist(\Omega',\partial\Omega)$,  $C_0$,  and $||u||_{L^{\infty}}$.
\begin{proof}
Choose $\Omega_1$ such that $\Omega'\subset\subset
\Omega_1\subset\subset\Omega$.  Denote $f=\Delta u$,  we define 
\begin{equation*}
\bar{u}(x)=\int_{\Omega_1}f(y)G(x,y)dV(y).
\end{equation*}
Here $G(x,y)$ is the fundamental solution to $\Delta$ in $\mathbb{R}^n$.  Let us assume that $n\ge 3$ so that we have $G(x,y)=c_n|x-y|^{2-n}$.
Then $\Delta(u-\bar{u})=0$ on $\Omega_1$ and $\Delta\bar{u}=f$.  We just need to do H\"older estimate for $\bar{u}$.  Choose $x_1,\,x_2\in \Omega'$ such that $r=|x_1-x_2|<\frac{r_0}{10}$.  Denote $x_*=\frac{1}{2}(x_1+x_2)$,  we can then compute
\begin{equation}\label{2.21NNew}
\begin{split}
&\bar{u}(x_1)-\bar{u}(x_2)=\int_{B(x_*,3r)}f(y)G(x_1,y)dV(y)-\int_{B(x_*,3r)}f(y)G(x_2,y)dV(y)\\
&+\int_{\Omega_1-B(x_*,3r)}f(y)(G(x_1,y)-G(x_2,y))dV(y)
\end{split}
\end{equation}
For the first term above,
\begin{equation}\label{2.22NNew}
\begin{split}
&|\int_{B(x_*,3r)}f(y)G(x_1,y)dV(y)|\le \int_{B(x_1,4r)}|f(y)|c_n|x_1-y|^{2-n}dV(y)\\
&=\int_0^{4r}c_n\rho^{2-n}d\rho\int_{\partial B(x_1,\rho)}|f(y)|d\mathcal{H}^{n-1}(y)=\int_0^{4r}c_n\rho^{2-n}\frac{d}{d\rho}\int_{B(x_1,\rho)}|f(y)|dV(y)\\
&=c_n(4r)^{2-n}\int_{B(x_1,4r)}|f(y)|dV(y)+\int_0^{4r}c_n(n-2)\rho^{1-n}\int_{B(x_1,\rho)}|f(y)|dV(y)d\rho\\
&\le c_n(4r)^{2-n}C_0(4r)^{n-2+\alpha}+\int_0^{4r}c_n(n-2)\rho^{1-n}C_0\rho^{n-2+\alpha}\le Cr^{\alpha}.
\end{split}
\end{equation}
The estimate for the second term is the same.  We now look at the last term.  First we note that for $y\notin B(x_*,3r)$,  it holds:
\begin{equation*}
\begin{split}
&|G(x_1,y)-G(x_2,y)|\le |x_1-x_2|\sup_{0\le t\le 1}|\nabla_xG((1-t)x_1+tx_2,y)|\\
&\le C_n|x_1-x_2|\sup_{0\le t\le 1}|(1-t)x_1+tx_2-y|^{1-n}\le C_n'|x_1-x_2||x_*-y|^{1-n}.
\end{split}
\end{equation*}
Therefore,
\begin{equation}\label{2.23NNew}
\begin{split}
&|\int_{\Omega_1-B(x_*,3r)}f(y)(G(x_1,y)-G(x_2,y))dV(y)|\le \int_{\Omega_1-B(x_*,3r)}|f(y)|C_n'r|x_*-y|^{1-n}dV(y)\\
&\le C_n'r\big(\int_{B(x_*,r_0)-B(x_*,3r)}|f(y)||x_*-y|^{1-n}dV(y)+\int_{\Omega_1-B(x_*,r_0)}|f(y)||x_*-y|^{1-n}dV(y)\big).
\end{split}
\end{equation}
In the first term above,  we have:
\begin{equation}\label{2.24NNew}
\begin{split}
&\int_{B(x_*,r_0)-B(x_*,3r)}|f(y)||x_*-y|^{1-n}dV(y)=\int_{3r}^{r_0}\rho^{1-n}d\rho\int_{\partial B(x_*,\rho)}|f(y)|d\mathcal{H}^{n-1}(y)\\
&=\int_{3r}^{r_0}\rho^{1-n}d\rho\frac{d}{d\rho}\int_{B(x_*,\rho)}|f(y)|dV(y)\le r_0^{1-n}\int_{B(x_*,r_0)}|f(y)|dV(y)\\
&+\int_{3r}^{r_0}(n-1)\rho^{-n}\int_{B(x_*,\rho)}|f(y)|dV(y)d\rho\le r_0^{1-n}\int_{B(x_*,r_0)}|f(y)|dV(y)\\
&+\int_{3r}^{r_0}(n-1)\rho^{-n}C_0\rho^{n-2+\alpha}d\rho\le C(r_0)+Cr^{-1+\alpha}.
\end{split}
\end{equation}
For the second term in (\ref{2.23NNew}),  one has
\begin{equation}\label{2.25NNNew}
\int_{\Omega_1-B(x_*,r_0)}|f(y)||x_*-y|^{1-n}dV(y)\le r_0^{1-n}\int_{\Omega_1-B(x_*,r_0)}|f(y)|dV(y).
\end{equation}
Combining (\ref{2.21NNew})-(\ref{2.25NNNew}),  we would get that $|u(x_1)-u(x_2)|\le Cr^{\alpha}$.
\end{proof}

\section{$L^{\infty}$ estimate for more general parabolic Hessian equations}

In this section,  we would like to generalize the above $L^{\infty}$ estimates to more general Hessian equations:
\begin{equation}\label{3.1}
\begin{split}
&f(-\partial_t\varphi,\lambda[h_{\varphi}])=e^F,\\
&\varphi|_{t=0}=\varphi_0.
\end{split}
\end{equation}

$f(\lambda_0,\lambda_1,\cdots,\lambda_n)$ is a $C^1$ function defined on a cone $\Gamma\subset \{(\lambda_0,\cdots,\lambda_n):\sum_{i=0}^n\lambda_i>0\}$.  Here $(h_{\varphi})_j^i=g^{i\bar{k}}(g_{\varphi})_{j\bar{k}}$.  
The eigenvalues of $(h_{\varphi})_j^i$ does not depend on the choice of local coordinates,  which we will denote as $\lambda[h_{\varphi}]$.

We also assume that $\Gamma$ contains the positive cone $\Gamma_+$ given by $\{(\lambda_0,\cdots,\lambda_n):\lambda_0>0,\cdots,\lambda_n>0\}$.
Moreover,  we assume the following conditions,  similar to the conditions assumed in \cite{Phong}:
\begin{enumerate}
\item $\frac{\partial f}{\partial \lambda_i}>0$ for any $0\le i\le n$,
\item $f$ is a symmetric function in terms of $\lambda_1$,  $\cdots$,  $\lambda_n$.
\item For some $c_0>0$,  we have $\partial_0f\det(\frac{\partial f}{\partial h_{ij}})\ge c_0$ on the positive cone $\Gamma_+$.
\item For some $C_0>0$ such that $\sum_{i=0}^n\lambda_i\frac{\partial f}{\partial \lambda_i}\le C_0f$ on the positive cone $\Gamma_+$.
\end{enumerate}
\begin{rem}
The first condition above just guarantees that our equation is parabolic.  The second condition is a usual assumption for Hessian equation.   Note that condition 4 can be guaranteed as long as $f$ is a homogeneous function in terms of its variables.  The above considered parabolic complex Monge-Ampere equation is only a special case by taking $f(\lambda)=\big(\Pi_{i=0}^n\lambda_i\big)^{\frac{1}{n+1}}$,  where $\Gamma=\Gamma_+=\{\lambda_0>0,\,\cdots,\lambda_n>0\}$.
We can give a few examples which satisfies the conditions (1)-(4) above:
\begin{example}
Let $f(\lambda_1,\cdots,\lambda_n)$ be a positive function defined on $\Gamma$ which is contained in $\{(\lambda_1,\cdots,\lambda_n):\sum_i\lambda_i>0\}$ and contains $\{(\lambda_1,\cdots,\lambda_n):\lambda_i>0,\,1\le i\le n\}$.  Assume also that $f$ satisfies the conditions assumed in \cite{Phong},  namely $\frac{\partial f}{\partial \lambda_i}>0$ for $1\le i\le n$,  $f$ symmetric in $\lambda_1,\cdots,\lambda_n$,  $\det(\frac{\partial f}{\partial h_{ij}})\ge c_0$ and  $\sum_{i=1}^n\lambda_i\frac{\partial f}{\partial \lambda_i}\le C_0f$ on the positive cone.

Then the function $g(\lambda_0,\cdots,\lambda_n)=\big(\lambda_0f(\lambda_1,\cdots,\lambda_n)\big)^{\frac{n}{n+1}}$ satisfies the conditions (1)-(4) above,  as a function defined on $\{\lambda_0>0\}\times \Gamma$.  In particular,  $g(\lambda_0,\cdots,\lambda_n)=(\lambda_0\sigma_k^{\frac{1}{k}}(\lambda_1,\cdots,\lambda_n))^{\frac{n}{n+1}}$ for $1\le k\le n$ and $g(\lambda_0,\cdots,\lambda_n)=\big(\lambda_0(\frac{\sigma_k(\lambda_1,\cdots,\lambda_n)}{\sigma_l(\lambda_1,\cdots,\lambda_n)})^{\frac{1}{k-l}}\big)^{\frac{n}{n+1}}$ for $1\le l<k\le n$ satisfy conditions (1)-(4).
\end{example}
\begin{example}
For any $1\le k\le n+1$,  define $f(\lambda)=(\sigma_k(\lambda_0,\cdots,\lambda_n))^{\frac{1}{k}}$,  or $f(\lambda_0,\cdots,\lambda_n)=(\frac{\sigma_k}{\sigma_l}(\lambda_0,\cdots,\lambda_n))^{\frac{1}{k-l}}$ for $n\ge k>l\ge 1$ will satisfy the above conditions.
\end{example}

\end{rem}

In order to establish the $L^{\infty}$ bound,  first we wish to establish the analogue of Proposition \ref{p2.3}:
\begin{prop}
Let $\varphi$ be a solution to (\ref{3.1}) such that $\varphi_0$ is bounded.  Denote $A_s=\int_{[0,T]\times M}(-\varphi-s)^+e^F\omega_0^ndt$.  Then there exists constants $\beta_0>0$,  $C>0$,  depending only on the background metric,  the structural constants $c_0$ and $C_0$,  such that for any $s\ge ||\varphi_0||_{L^{\infty}}$,
\begin{equation*}
\sup_{t\in[0,T]}\int_Me^{\beta_0A_s^{-\frac{1}{n+2}}((-\varphi-s)^+)^{\frac{n+2}{n+1}}}\omega_0^n\le C\exp(CE).
\end{equation*}
Here $E=\int_{[0,T]\times M}(-\varphi)e^{(n+1)F}\omega_0^ndt.$
\end{prop}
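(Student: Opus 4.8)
The plan is to reproduce the argument of Lemma \ref{l2.4} and Proposition \ref{p2.3} almost verbatim, with the operator $-\partial_t+(-\partial_t\varphi)\Delta_\varphi$ replaced by the \emph{genuine} linearization of the Hessian operator and the structural conditions (1)--(4) used to recover the same differential inequality for a barrier function; once that is in place, Corollary \ref{c2.2} furnishes the integrability and the conclusion follows exactly as in \S2. The one place where the structural assumptions really enter is the barrier computation, and that is the step I expect to be the heart of the matter.

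Concretely, fix $s\ge\|\varphi_0\|_{L^\infty}$, take $\eta_j:\mathbb R\to\mathbb R_+$ with $\eta_j(x)\to\max(x,0)$ and $\eta_j(x)>x$ for $x>0$, set $A_{j,s}=\int_{[0,T]\times M}\eta_j(-\varphi-s)e^{(n+1)F}\omega_0^ndt$, and let $\psi_j$ solve the \emph{parabolic complex Monge--Ampere} equation
$$(-\partial_t\psi_j)\,\omega_{\psi_j}^n=\frac{\eta_j(-\varphi-s)e^{(n+1)F}}{A_{j,s}}\,\omega_0^n,\qquad \psi_j|_{t=0}=0,$$
which is solvable (with $-\partial_t\psi_j>0$, $\omega_{\psi_j}>0$) by Proposition \ref{p2.3}, the right hand side being a positive smooth density of total integral $1$. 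The weight $e^{(n+1)F}$ rather than $e^F$ is dictated by homogeneity: the Monge--Ampere operator has ``degree'' $n+1$, which matches $f^{\,n+1}$ under condition (4); this is also why the hypothesis is stated with $e^{(n+1)F}$. Keeping the auxiliary equation a genuine complex Monge--Ampere flow is essential so that Corollary \ref{c2.2} applies to $\psi_j$ unchanged.

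Now the barrier. Let $Lu=-(\partial_0f)\partial_tu+f^{i\bar j}\nabla_{i\bar j}u$ be the linearization of the left side of \eqref{3.1}, where $f^{i\bar j}=\partial f/\partial(g_\varphi)_{i\bar j}$; by condition (1) (and symmetry (2), which makes $f^{i\bar j}$ well defined) this is parabolic and the maximum principle applies. Choose $\eps\sim c_0^{1/(n+2)}A_{j,s}^{-1/(n+2)}$ and $\Lambda\sim c_0^{-1}A_{j,s}$, with constants depending on $n,c_0,C_0$, so that $\eps\ge\frac{n+1}{n+2}\Lambda^{-1/(n+2)}\ge\frac{n+1}{n+2}(-\psi_j+\Lambda)^{-1/(n+2)}$, and set $\Phi_j=\eps(-\varphi-s)-(-\psi_j+\Lambda)^{\frac{n+1}{n+2}}$. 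Using $\varphi_{i\bar j}=(g_\varphi)_{i\bar j}-(g_0)_{i\bar j}$ and condition (4),
$$L(-\varphi-s)=-\sum_{p=0}^n\lambda_p\tfrac{\partial f}{\partial\lambda_p}+\sum_{p=1}^n\tfrac{\partial f}{\partial\lambda_p}\ \ge\ -C_0e^F+\sum_{p=1}^n\tfrac{\partial f}{\partial\lambda_p},$$
while, discarding a nonnegative gradient term and using $-(\psi_j)_{i\bar j}=(g_0)_{i\bar j}-(g_{\psi_j})_{i\bar j}$, the rest of $L\Phi_j$ equals $\frac{n+1}{n+2}(-\psi_j+\Lambda)^{-1/(n+2)}\big[(\partial_0f)(-\partial_t\psi_j)+f^{i\bar j}(g_{\psi_j})_{i\bar j}-\sum_{p=1}^n\frac{\partial f}{\partial\lambda_p}\big]$. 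The two $\sum_p\frac{\partial f}{\partial\lambda_p}$ terms cancel favorably by the choice of $\eps$. For the bracket, work at a point in $g_0$-normal coordinates diagonalizing $h_\varphi$, so $f^{i\bar j}(g_{\psi_j})_{i\bar j}=\sum_{p=1}^n\frac{\partial f}{\partial\lambda_p}(g_{\psi_j})_{p\bar p}$; applying the arithmetic--geometric mean inequality to the $n+1$ positive numbers $(\partial_0f)(-\partial_t\psi_j),\ \frac{\partial f}{\partial\lambda_1}(g_{\psi_j})_{1\bar1},\dots,\frac{\partial f}{\partial\lambda_n}(g_{\psi_j})_{n\bar n}$, then Hadamard's inequality $\prod_p(g_{\psi_j})_{p\bar p}\ge\det\!\big((g_{\psi_j})_{i\bar j}\big)=\omega_{\psi_j}^n/\omega_0^n$, and finally condition (3), yields
$$(\partial_0f)(-\partial_t\psi_j)+f^{i\bar j}(g_{\psi_j})_{i\bar j}\ \ge\ (n+1)c_0^{\frac1{n+1}}\Big((-\partial_t\psi_j)\tfrac{\omega_{\psi_j}^n}{\omega_0^n}\Big)^{\frac1{n+1}}=(n+1)c_0^{\frac1{n+1}}\Big(\tfrac{\eta_j(-\varphi-s)e^{(n+1)F}}{A_{j,s}}\Big)^{\frac1{n+1}}.$$
Hence $L\Phi_j\ge e^F\big(-C_0\eps+\tfrac{(n+1)^2}{n+2}c_0^{1/(n+1)}A_{j,s}^{-1/(n+1)}(-\psi_j+\Lambda)^{-1/(n+2)}\eta_j(-\varphi-s)^{1/(n+1)}\big)$, and the maximum principle argument runs exactly as in Lemma \ref{l2.4}: $\Phi_j\le0$ at $t=0$ since $s\ge\|\varphi_0\|_{L^\infty}$, and an interior positive maximum would force $(-\psi_j+\Lambda)^{-1/(n+2)}\eta_j(-\varphi-s)^{1/(n+1)}>\eps^{-1/(n+1)}$, contradicting $L\Phi_j\le0$ for a suitable $\eps$. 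Thus $\Phi_j\le0$, i.e. $c_nA_{j,s}^{-1/(n+1)}((-\varphi-s)^+)^{(n+2)/(n+1)}\le-\psi_j+C_nA_{j,s}$ with $c_n,C_n$ depending only on $n,c_0,C_0$.

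Finally, exactly as in the proof of Proposition \ref{p2.3}: multiply by the constant $\alpha_0$ of Corollary \ref{c2.2}, exponentiate, and integrate over $M$; since the auxiliary flow starts at $0$ and has right hand side of total integral $1$, Corollary \ref{c2.2} gives $\sup_t\int_Me^{-\alpha_0\psi_j}\omega_0^n\le C$, hence $\sup_t\int_Me^{\alpha_0c_nA_{j,s}^{-1/(n+1)}((-\varphi-s)^+)^{(n+2)/(n+1)}}\omega_0^n\le Ce^{C'A_{j,s}}$. Letting $j\to\infty$ gives $A_{j,s}\to A_s$, and since $A_s\le E$ up to a harmless additive constant controlled by $\sup_M\varphi$ (Lemma \ref{l2.1}) and $\int e^{(n+1)F}$, this is the asserted estimate. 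The main obstacle is precisely the barrier computation: the key realization is that one must linearize the Hessian operator honestly rather than pass to the renormalized operator used in \S2, because it is exactly this that lets conditions (3)--(4) enter through a \emph{single} application of the arithmetic--geometric mean inequality to the full $(n+1)$-tuple of ``eigenvalue'' terms (including the time term $(\partial_0f)(-\partial_t\psi_j)$), with no Young-type rebalancing required.
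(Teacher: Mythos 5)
Your overall strategy is essentially the paper's: the same auxiliary parabolic Monge--Amp\`ere flow with density $\eta_j(-\varphi-s)e^{(n+1)F}/A_{j,s}$ started from $0$, the same barrier $\Phi_j=\eps(-\varphi-s)-(-\psi_j+\Lambda)^{\frac{n+1}{n+2}}$ with the same scalings of $\eps$ and $\Lambda$ in $A_{j,s}$, $c_0$, $C_0$, the genuine linearization $L=-\partial_0f\,\partial_t+f^{i\bar j}\partial_{i\bar j}$, and the conclusion via Corollary \ref{c2.2} and $A_s\le E$ (up to a constant). Your single application of the arithmetic--geometric mean inequality to the full $(n+1)$-tuple, including the time term $\partial_0f(-\partial_t\psi_j)$, is algebraically equivalent to the paper's two-step route (AM--GM on the $n$ spatial terms followed by a Young-type inequality in $-\partial_t\psi_j$); both produce exactly the lower bound $(n+1)c_0^{1/(n+1)}\big(\eta_j(-\varphi-s)e^{(n+1)F}/A_{j,s}\big)^{1/(n+1)}$, so that difference is cosmetic. (Your use of the weight $e^{(n+1)F}$ in $A_{j,s}$ is also what the paper actually does.)

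There is, however, one genuine gap. The structural conditions (3) and (4) are assumed only on the positive cone $\Gamma_+$, whereas the solution is only required to satisfy $(-\partial_t\varphi,\lambda[h_\varphi])\in\Gamma$, which may be strictly larger (as in the $\sigma_k$-type examples). Your inequality $L(-\varphi-s)\ge -C_0e^F+\sum_{p=1}^n\partial f/\partial\lambda_p$ invokes (4), and your determinant bound $\partial_0f\prod_p\partial f/\partial\lambda_p\ge c_0$ invokes (3), at an arbitrary point along the solution, where neither assumption is available. The argument survives only because the differential inequality is needed solely at the positive maximum $(t_0,x_0)$ of $\Phi_j$, and there one must first prove, as the paper does, that $(-\partial_t\varphi,\lambda[h_\varphi])(t_0,x_0)\in\Gamma_+$: from $\partial_t\Phi_j(t_0,x_0)\ge 0$ together with $-\partial_t\psi_j>0$ one gets $-\partial_t\varphi(t_0,x_0)>0$, and from $(\Phi_j)_{i\bar j}(t_0,x_0)\le 0$ together with $\omega_{\psi_j}>0$ and the choice $\eps\ge\frac{n+1}{n+2}(-\psi_j+\Lambda)^{-1/(n+2)}$ one gets $\omega_\varphi(t_0,x_0)>0$. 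Without this verification the appeal to the structural constants $c_0$, $C_0$ at the maximum point is unjustified; once it is added, your proof coincides with the paper's.
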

Similar to Proposition \ref{p2.3},  we let $\psi_j$ be defined as:
\begin{equation*}
\begin{split}
&(-\partial_t\psi_j)\omega_{\psi_j}^n=\frac{\eta_j(-\varphi-s)e^{(n+1)F}\omega_0^n}{A_{j,s}},\\
&\psi_j|_{t=0}=0
\end{split}
\end{equation*}
\begin{lem}
There exists constants $c$ and $C$,  depending only on dimension and the structural constant $c_0$,  $C_0$ from $f$,   such that
\begin{equation*}
c_nA_{j,s}^{-\frac{1}{n+1}}((-\varphi-s)^+)^{\frac{n+2}{n+1}}\le -\psi_j+C_nA_{j,s}.
\end{equation*}
\end{lem}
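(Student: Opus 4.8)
The plan is to follow the proof of Lemma \ref{l2.4}, replacing the linearized complex Monge--Amp\`ere operator by the linearization of (\ref{3.1}) and the special Monge--Amp\`ere algebra by the structural hypotheses (3) and (4).

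Fix a spacetime point, work in coordinates normal for $\omega_0$ in which $(g_\varphi)_{i\bar j}$ is diagonal, and set $F_0=\partial_0 f$ and $F^{i\bar j}=\partial f/\partial(h_\varphi)_{i\bar j}$, both evaluated at $(-\partial_t\varphi,\lambda[h_\varphi])$; in such coordinates $F^{i\bar j}$ is diagonal with positive entries $\partial_i f$ and coincides with $\partial f/\partial\varphi_{i\bar j}$. The operator to work with is $Lu=-F_0\partial_t u+F^{i\bar j}u_{i\bar j}$, which by condition (1) is parabolic, so $Lu\le 0$ at a maximum of $u$ over $[0,t_0]\times M$ attained at time $t_0$. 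The two inputs I will feed in are: from (4), $F_0(-\partial_t\varphi)+F^{i\bar j}(g_\varphi)_{i\bar j}=\sum_{i=0}^n\lambda_iF_i\le C_0f=C_0e^F$; and from (3), $\det F^{i\bar j}\ge c_0/F_0$ (a statement about $f$ on $\Gamma_+$, not about the equation).

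Take the test function $\Phi_j=\eps(-\varphi-s)-(-\psi_j+\Lambda)^{\frac{n+1}{n+2}}$ with $\eps=c_1(n,c_0,C_0)\,A_{j,s}^{-\frac1{n+2}}$ and $\Lambda=c_2(n,c_0,C_0)\,A_{j,s}$ chosen, as in Lemma \ref{l2.4}, so that $\eps\ge\frac{n+1}{n+2}\Lambda^{-\frac1{n+2}}\ge\frac{n+1}{n+2}(-\psi_j+\Lambda)^{-\frac1{n+2}}$ (using $\psi_j\le 0$). Using $(-\varphi-s)_{i\bar j}=(g_0)_{i\bar j}-(g_\varphi)_{i\bar j}$ and $(\psi_j)_{i\bar j}=(g_{\psi_j})_{i\bar j}-(g_0)_{i\bar j}$, the curvature-free part of $L\Phi_j$ collects into $\big(\eps-\tfrac{n+1}{n+2}(-\psi_j+\Lambda)^{-\frac1{n+2}}\big)F^{i\bar j}(g_0)_{i\bar j}\ge 0$ (dropped), plus $-\eps\sum_{i=0}^n\lambda_iF_i\ge-\eps C_0e^F$ by (4), the gradient term $-\tfrac{n+1}{(n+2)^2}(-\psi_j+\Lambda)^{-\frac{n+3}{n+2}}F^{i\bar j}\partial_i\psi_j\partial_{\bar j}\psi_j\le 0$ (dropped), and $\tfrac{n+1}{n+2}(-\psi_j+\Lambda)^{-\frac1{n+2}}\big(F_0(-\partial_t\psi_j)+F^{i\bar j}(g_{\psi_j})_{i\bar j}\big)$. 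For this last bracket I would use AM--GM together with Hadamard's inequality and condition (3) together with the $\psi_j$-equation, $F^{i\bar j}(g_{\psi_j})_{i\bar j}\ge n\big(\det F^{i\bar j}\det g_{\psi_j}\big)^{\frac1n}\ge n\big(\tfrac{c_0}{F_0}\cdot\tfrac{\eta_j(-\varphi-s)e^{(n+1)F}}{A_{j,s}(-\partial_t\psi_j)}\big)^{\frac1n}$, followed by the elementary inequality $F_0y+nB^{\frac1n}y^{-\frac1n}\ge(n+1)F_0^{\frac1{n+1}}B^{\frac1{n+1}}$ with $y=-\partial_t\psi_j$. The powers of $F_0$ cancel — which is exactly why $e^{(n+1)F}$, rather than $e^F$, is the right weight on the right-hand side of the $\psi_j$-equation — and one is left with
\begin{equation*}
L\Phi_j\ge e^F\Big(\tfrac{(n+1)^2}{n+2}\,c_0^{\frac1{n+1}}(-\psi_j+\Lambda)^{-\frac1{n+2}}\,\eta_j^{\frac1{n+1}}(-\varphi-s)\,A_{j,s}^{-\frac1{n+1}}-\eps C_0\Big).
\end{equation*}

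Now assume $\Phi_j$ attains a positive maximum at $(t_0,x_0)$. Since $s\ge||\varphi_0||_{L^\infty}$ and $\psi_j|_{t=0}=0$ one has $\Phi_j(0,\cdot)<0$, so $t_0>0$ and $L\Phi_j(t_0,x_0)\le 0$; but at that point $\Phi_j>0$ forces $-\varphi-s>0$, hence $\eta_j^{\frac1{n+1}}(-\varphi-s)\ge(-\varphi-s)^{\frac1{n+1}}$ and $(-\psi_j+\Lambda)^{-\frac1{n+2}}(-\varphi-s)^{\frac1{n+1}}>\eps^{-\frac1{n+1}}$, so the displayed bound gives $L\Phi_j(t_0,x_0)>e^F\big(\tfrac{(n+1)^2}{n+2}c_0^{\frac1{n+1}}\eps^{-\frac1{n+1}}A_{j,s}^{-\frac1{n+1}}-\eps C_0\big)>0$ by the choice of $\eps$, a contradiction. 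Therefore $\Phi_j\le 0$ on $[0,T]\times M$, i.e. $\eps\big((-\varphi-s)^+\big)\le(-\psi_j+\Lambda)^{\frac{n+1}{n+2}}$; raising to the power $\tfrac{n+2}{n+1}$ and substituting the values of $\eps$ and $\Lambda$ yields the assertion, with $c_n$ and $C_n$ depending only on $n$, $c_0$ and $C_0$. The step requiring the most care is precisely this passage to a general $f$: one must verify that $F^{i\bar j}$ is positive definite (needed both for the parabolic maximum principle and for the AM--GM/Hadamard estimate), that the solution stays in $\Gamma_+$ so that (3)--(4) are applicable at the arguments where they are invoked, and that the dependence of $f$ on the eigenvalues of $h_\varphi=g_0^{-1}g_\varphi$ causes no difficulty once one passes to normal coordinates — exactly the places where the Monge--Amp\`ere proof relied on its explicit structure.
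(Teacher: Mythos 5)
Your proposal follows the paper's own route step for step --- the same auxiliary solution $\psi_j$ of the parabolic Monge--Amp\`ere equation with the weight $e^{(n+1)F}$, the same test function $\Phi_j=\varepsilon(-\varphi-s)-(-\psi_j+\Lambda)^{\frac{n+1}{n+2}}$ with $\varepsilon\sim A_{j,s}^{-\frac{1}{n+2}}$, $\Lambda\sim A_{j,s}$, the same Hadamard/AM--GM step followed by Young's inequality in which the powers of $\partial_0 f$ cancel, and the same contradiction at a positive maximum. But there is one genuine gap, and it is exactly the point where the Hessian case differs from the Monge--Amp\`ere case: the structural hypotheses (3) and (4), which you use to get $\partial_0f(-\partial_t\varphi)+\frac{\partial f}{\partial h_{ij}}(h_\varphi)^i_j\le C_0f$ and $\det\big(\frac{\partial f}{\partial h_{ij}}\big)\ge c_0/\partial_0 f$, are only assumed on the positive cone $\Gamma_+$, whereas the solution is only assumed to satisfy $(-\partial_t\varphi,\lambda[h_\varphi])\in\Gamma$. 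You acknowledge this at the end, but you phrase the missing step as ``verify that the solution stays in $\Gamma_+$'', which is not available under the hypotheses (and is not needed). What the paper actually proves, and what your argument is missing, is that at the point $(t_0,x_0)$ where $\Phi_j$ attains a positive maximum the data automatically lies in $\Gamma_+$: from $\partial_t\Phi_j(t_0,x_0)\ge 0$ together with $-\partial_t\psi_j>0$ one gets $-\partial_t\varphi(t_0,x_0)>0$; and from $\Phi_{j,i\bar j}(t_0,x_0)\le 0$, writing $\psi_{j,i\bar j}=(g_{\psi_j})_{i\bar j}-g_{i\bar j}$, dropping the nonnegative gradient term, and using $\varepsilon\ge\frac{n+1}{n+2}(-\psi_j+\Lambda)^{-\frac{1}{n+2}}$ together with $\omega_{\psi_j}>0$, one gets $\omega_0+\sqrt{-1}\partial\bar\partial\varphi>0$ at $(t_0,x_0)$.

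Once this is inserted, the rest of your computation is sound, provided you also restructure it slightly: the differential inequality $L\Phi_j\ge e^F(\cdots)$ cannot be asserted pointwise on all of $[0,T]\times M$ (off $\Gamma_+$ the bounds from (3)--(4) are unjustified), but it is only needed at the single point $(t_0,x_0)$, where the maximum conditions give $L\Phi_j\le 0$ by condition (1) and the argument above legitimizes (3)--(4). With that, your choice of $\varepsilon$ and $\Lambda$ yields the contradiction and hence $\Phi_j\le 0$, which is the claimed estimate with constants depending only on $n$, $c_0$, $C_0$.
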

\begin{proof}
The calculation follows similar lines as Lemma \ref{l2.4}.
We define the operator $L$ to be:
\begin{equation*}
Lu=-\partial_0f\partial_tu+\frac{\partial f}{\partial h_{ij}}g^{i\bar{k}}\partial_{j\bar{k}}u.
\end{equation*}
The following calculations are done pointwisely at any $z\in M$,  and we assume that the local coordinates have been chosen so that $g_{i\bar{j}}=\delta_{ij}$,  so that under this coordinate,  one has:
\begin{equation*}
Lu=-\partial_0f\partial_tu+\frac{\partial f}{\partial h_{ij}}\partial_{j\bar{i}}u.
\end{equation*}
We choose $\eps=(\frac{n+1}{n+2})^{\frac{n+1}{n+2}}c_0^{\frac{1}{n+2}}C_0^{-\frac{n+1}{n+2}}$,  $\Lambda=\eps^{-(n+2)}(\frac{1}{2}\frac{n+2}{n+1})^{n+2}$ and we similarly define
\begin{equation*}
\Phi=\eps(-\varphi-s)-(-\psi+\Lambda)^{\frac{n+1}{n+2}}.
\end{equation*}
Then we can compute 
\begin{equation}\label{3.2NN}
\partial_t\Phi=\eps(-\partial_t\varphi)-\frac{n+1}{n+2}(-\psi+\Lambda)^{-\frac{1}{n+2}}(-\partial_t\psi).
\end{equation}
Also
\begin{equation}\label{3.3NN}
\Phi_{i\bar{j}}=-\eps\varphi_{i\bar{j}}-\frac{n+1}{n+2}(-\psi+\Lambda)^{-\frac{1}{n+2}}(-\psi_{i\bar{j}})+\frac{n+1}{(n+2)^2}(-\psi+\Lambda)^{-\frac{n+3}{n+2}}\psi_i\psi_{\bar{j}}.
\end{equation}
Therefore,  
\begin{equation*}
\begin{split}
&\sum_{i,j}\frac{\partial f}{\partial h_{ij}}\Phi_{i\bar{j}}=-\eps\frac{\partial f}{\partial h_{ij}}\varphi_{i\bar{j}}+\frac{n+1}{n+2} (-\psi+\Lambda)^{-\frac{1}{n+2}}\frac{\partial f}{\partial h_{ij}}\psi_{i\bar{j}}+\frac{n+1}{(n+2)^2}(-\psi+\Lambda)^{-\frac{n+3}{n+2}}\frac{\partial f}{\partial h_{ij}}\psi_i\psi_{\bar{j}}\\
&\ge -\eps\frac{\partial f}{\partial h_{ij}}(\omega_{\varphi})_{i\bar{j}}+\eps\frac{\partial f}{\partial h_{ij}}g_{i\bar{j}}+\frac{n+1}{n+2}(-\psi+\Lambda)^{-\frac{1}{n+2}}\frac{\partial f}{\partial h_{ij}}(\omega_{\psi})_{i\bar{j}}-\frac{n+1}{n+2}(-\psi+\Lambda)^{-\frac{1}{n+2}}\frac{\partial f}{\partial h_{ij}}g_{i\bar{j}}
\end{split}
\end{equation*}

Combining,  we get
\begin{equation*}
\begin{split}
&L\Phi\ge -\eps(\partial_t\varphi\partial_0f+\frac{\partial f}{\partial h_{ij}}(\omega_{\varphi})_{i\bar{j}})+\frac{n+1}{n+2}(-\psi+\Lambda)^{-\frac{1}{n+2}}\partial_0f(-\partial_t\psi)\\
&+\frac{n+1}{n+2}(-\psi+\Lambda)^{-\frac{1}{n+2}}\frac{\partial f}{\partial h_{ij}}(\omega_{\psi})_{i\bar{j}}+(\eps-\frac{n+1}{n+2}(-\psi+\Lambda)^{-\frac{1}{n+2}})\frac{\partial f}{\partial h_{ij}}g_{i\bar{j}}
\end{split}
\end{equation*}
According to our choice of constants,  we have
\begin{equation*}
\eps \ge \frac{n+1}{n+2}\Lambda^{-\frac{1}{n+2}}.
\end{equation*}
So that
\begin{equation*}
\begin{split}
&L\Phi\ge -\eps\big(\partial_t\varphi\partial_0f+\frac{\partial f}{\partial h_{ij}}(\omega_{\varphi})_{i\bar{j}}\big)+\frac{n+1}{n+2}(-\psi+\Lambda)^{-\frac{1}{n+2}}\partial_0f(-\partial_t\psi)\\
&+\frac{n+1}{n+2}(-\psi+\Lambda)^{-\frac{1}{n+2}}\frac{\partial f}{\partial h_{ij}}(\omega_{\psi})_{i\bar{j}}.
\end{split}
\end{equation*}
Now we assume that $\Phi$ achieves positive maximum at $(t_0,x_0)$.  As before,  $\Phi\le 0$ at $t_0=0$ because of our choice of $s$,  so that we must have $t_0>0$.  Then at $(t_0,x_0)$,  it holds:
\begin{equation*}
\partial_t\Phi(t_0,x_0)\ge 0,\,\,\,\Phi_{i\bar{j}}(t_0,x_0)\le 0.
\end{equation*}
It follows from (\ref{3.2NN}) and (\ref{3.3NN}),  we have
\begin{equation*}
-\partial_t\varphi(t_0,x_0)>0,\,\,(\omega_0)_{i\bar{j}}+\varphi_{i\bar{j}}(t_0,x_0)>0.
\end{equation*} 
In particular,  this means that $(-\partial_t\varphi(t_0,x_0),(\omega_{\varphi})_{i\bar{j}}(t_0,x_0))\in\Gamma_+$.
Here we assumed to have chosen holomorphic normal coordinates at $x_0$ so that $(\omega_0)_{i\bar{j}}(x_0)=\delta_{ij}$.  At $(t_0,x_0)$,  we have 
\begin{equation*}
\begin{split}
&\partial_t\varphi\partial_0f+\frac{\partial f}{\partial h_{ij}}(\omega_{\varphi})_{i\bar{j}}\le C_0f,\\
&\partial_0f\det(\frac{\partial f}{\partial h_{ij}})\ge c_0.
\end{split}
\end{equation*}

This would imply that
\begin{equation*}
\begin{split}
&L\Phi\ge -\eps C_0f+\frac{n+1}{n+2}(-\psi+\Lambda)^{-\frac{1}{n+2}}\partial_0f(-\partial_t\psi)+\frac{n+1}{n+2}(-\psi+\Lambda)^{-\frac{1}{n+2}}\frac{\partial f}{\partial h_{ij}}(\omega_{\psi})_{i\bar{j}}\\
&\ge -\eps C_0f+\frac{n+1}{n+2}(-\psi+\Lambda)^{-\frac{1}{n+2}}\partial_0f(-\partial_t\psi)+n\frac{n+1}{n+2}(-\psi+\Lambda)^{-\frac{1}{n+2}}\big(\det(\frac{\partial f}{\partial h_{ij}})\det(\omega_{\psi})_{i\bar{j}})\big)^{\frac{1}{n}}\\
&\ge -\eps C_0f+\frac{n+1}{n+2}(-\psi+\Lambda)^{-\frac{1}{n+2}}\partial_0f(-\partial_t\psi)+n\frac{n+1}{n+2}(-\psi+\Lambda)^{-\frac{1}{n+2}}(\frac{c_0}{\partial_0f}\frac{(-\varphi-s)^+e^{(n+1)F}A_s^{-1}}{-\partial_t\psi})^{\frac{1}{n}}\\
&=-\eps C_0f+\frac{n+1}{n+2}(-\psi+\Lambda)^{-\frac{1}{n+2}}\partial_0f(-\partial_t\psi)\\
&+\frac{n(n+1)}{n+2}c_0^{\frac{1}{n}}A_s^{-\frac{1}{n}}(-\psi+\Lambda)^{-\frac{1}{n+2}}((-\varphi-s)^+)^{\frac{1}{n}}\frac{f^{\frac{n+1}{n}}}{(\partial_0f)^{\frac{1}{n}}(-\partial_t\psi)^{\frac{1}{n}}}.
\end{split}
\end{equation*}
Therefore,  we get
\begin{equation*}
\begin{split}
&L\Phi\ge -\eps C_0f+\frac{n+1}{n+2}(-\psi-\Lambda)^{-\frac{1}{n+2}}\partial_0f(-\partial_t\psi) \\
&+\frac{n(n+1)}{n+2}c_0^{\frac{1}{n}}A_s^{-\frac{1}{n}}(-\psi+\Lambda)^{-\frac{1}{n+2}}((-\varphi-s)^+)^{\frac{1}{n}}(-\partial_t\psi)^{-\frac{1}{n}}f^{\frac{n+1}{n}}(\partial_0f)^{-\frac{1}{n}}.
\end{split}
\end{equation*}
Now we use the inequality that
\begin{equation*}
Ay^n+By^{-1}\ge n^{-\frac{n}{n+1}}A^{\frac{1}{n+1}}B^{\frac{n}{n+1}}.
\end{equation*}
Here $y=(-\partial_t\psi)^{\frac{1}{n}}$,  $A=\frac{n+1}{n+2}(-\psi+\Lambda)^{-\frac{1}{n+2}}\partial_0f$,  $B=\frac{n(n+1)}{n+2}c_0^{\frac{1}{n}}A_s^{-\frac{1}{n}}(-\psi+\Lambda)^{-\frac{1}{n+2}}((-\varphi-s)^+)^{\frac{1}{n}}f^{\frac{n+1}{n}}(\partial_0f)^{-\frac{1}{n}}$. 
Therefore,  we get,  at $(t_0,x_0)$.  
\begin{equation*}
L\Phi\ge -\eps C_0f+\frac{n+1}{n+2}c_0^{\frac{1}{n+1}}(-\psi+\Lambda)^{-\frac{1}{n+2}}((-\varphi-s)^+)^{\frac{1}{n+1}}A_s^{-\frac{1}{n+1}}f.
\end{equation*}
Note that since $\Phi(t_0,x_0)>0$,  we have that
\begin{equation*}
((-\varphi-s)^+)^{\frac{1}{n+1}}(-\psi+\Lambda)^{-\frac{1}{n+2}}>\eps^{-\frac{1}{n+1}}.
\end{equation*}
Therefore,  at $(t_0,x_0)$,  we have
\begin{equation*}
0\ge L\Phi\ge -\eps C_0f+\frac{n+1}{n+2}c_0^{\frac{1}{n+1}}\eps^{-\frac{1}{n+1}}A_s^{-\frac{1}{n+1}}f>0.
\end{equation*}
The last $>0$ follows from our choice of $\eps$
and we get a contradiction and we must have $\Phi\le 0.$
\end{proof} 
As a direct consequence,  we get that
\begin{prop}\label{p3.4}
Denote $A_s=\int_{[0,T]\times M}(-\varphi-s)^+e^{(n+1)F}\omega_0^ndt$.  Then there exist constants $\beta_0>0$,  depending only on the background metric,  the structural constants $c_0$,  $C_0$ and $n$,  such that for any $s\ge ||\varphi_0||_{L^{\infty}}$
\begin{equation*}
\sup_{t\in[0,T]}\int_Me^{\beta_0 A_s^{-\frac{1}{n+2}}((-\varphi-s)^+)^{\frac{n+2}{n+1}}}\omega_0^n\le C\exp(CE).
\end{equation*}
Here 
\begin{equation*}
E=\int_{[0,T]\times M}(-\varphi)e^{(n+1)F}\omega_0^n.
\end{equation*}
\end{prop}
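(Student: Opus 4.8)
The plan is to imitate, essentially verbatim, the passage from Lemma~\ref{l2.4} to Proposition~\ref{p2.3} in the complex Monge--Ampère case, now feeding in the pointwise comparison just established and using the fact that the auxiliary function $\psi_j$ solves a \emph{genuine} parabolic complex Monge--Ampère flow, so that the estimates of Section~2 --- in particular Corollary~\ref{c2.2} --- apply to it without change. Note first that $\psi_j$ exists and is smooth by the existence result for equation (\ref{2.1NNN}) with smooth data, since its right-hand side is a positive smooth function once $\eta_j$ is taken smooth; at the very end one lets $\eta_j\downarrow(\cdot)^+$.

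First I would exponentiate. Let $\alpha_0>0$ be the constant of Corollary~\ref{c2.2}; multiplying the inequality of the preceding Lemma by $\alpha_0$ and exponentiating gives, pointwise on $[0,T]\times M$,
\[
\exp\!\Big(\alpha_0 c_n A_{j,s}^{-\frac{1}{n+1}}\big((-\varphi-s)^+\big)^{\frac{n+2}{n+1}}\Big)\le e^{\alpha_0 C_n A_{j,s}}\,e^{-\alpha_0\psi_j}.
\]
Then I integrate over $M$ at fixed $t$. The decisive point is that the right-hand side of the equation defining $\psi_j$ has total mass $\int_{[0,T]\times M}\big(\eta_j(-\varphi-s)e^{(n+1)F}/A_{j,s}\big)\omega_0^n\,dt=1$ by the very definition of $A_{j,s}$, while $\psi_j|_{t=0}=0$; hence Corollary~\ref{c2.2}, whose constant depends only on the background metric, an upper bound for that mass, and $\|\psi_j|_{t=0}\|_{L^{\infty}}$, applies to $\psi_j$ and yields $\sup_{t\in[0,T]}\int_M e^{-\alpha_0\psi_j}\omega_0^n\le C$ with $C$ depending only on the background metric. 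Combining and taking $\sup_t$,
\[
\sup_{t\in[0,T]}\int_M\exp\!\Big(\alpha_0 c_n A_{j,s}^{-\frac{1}{n+1}}\big((-\varphi-s)^+\big)^{\frac{n+2}{n+1}}\Big)\omega_0^n\le C\,e^{\alpha_0 C_n A_{j,s}}.
\]

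Next I would let $j\to\infty$. Since $\eta_j(-\varphi-s)\to(-\varphi-s)^+$ and $e^{(n+1)F}\in L^1([0,T]\times M)$, dominated convergence gives $A_{j,s}\to A_s$, and Fatou's lemma passes the limit inside the left integral; this produces the same estimate with $A_s$ in place of $A_{j,s}$, for $\beta_0:=\alpha_0 c_n$ (the passage to the exponent $A_s^{-1/(n+2)}$ in the statement being the same adjustment of the power of $A_s$ as in Section~2). Finally I convert $e^{CA_s}$ into $C\exp(CE)$: the integrand of $A_s$ vanishes on $\{-\varphi\le s\}$, on $\{-\varphi>s\}$ one has $0\le(-\varphi-s)^+\le-\varphi$, and $-\varphi\ge-\sup_M\varphi\ge-\|\varphi_0\|_{L^{\infty}}$ since $\partial_t\varphi\le 0$; hence $A_s\le E+\|\varphi_0\|_{L^{\infty}}\int_{[0,T]\times M}e^{(n+1)F}\omega_0^n\,dt$, the last integral being finite (bounded by $Ent_p(F)$ plus a volume term). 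Enlarging $C$ gives the claim.

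I do not expect a genuine obstacle at this stage: all the analysis tied to the nonlinearity $f$ was already carried out in the preceding Lemma (conditions (3)--(4) together with the arithmetic--geometric inequality applied to $\det(\partial f/\partial h_{ij})$ and $\det(\omega_{\psi})_{i\bar{j}}$). The one thing that must be verified rather than taken for granted is that Corollary~\ref{c2.2}, established only for the parabolic complex Monge--Ampère flow, legitimately applies to $\psi_j$; this is precisely why $\psi_j$ was set up as a solution of \emph{that} flow --- the case $f=(\prod_i\lambda_i)^{1/(n+1)}$ --- with a unit-mass right-hand side and zero initial datum, so that the Section~2 theory can be quoted verbatim.
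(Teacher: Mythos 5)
Your proposal is correct and follows the paper's own route: the paper derives Proposition \ref{p3.4} exactly as a ``direct consequence'' of the preceding comparison lemma, by exponentiating it, applying Corollary \ref{c2.2} to the auxiliary function $\psi_j$ (which was deliberately set up as a solution of the parabolic complex Monge--Amp\`ere flow with unit-mass right-hand side and zero initial value, so the Section 2 estimates apply with constants depending only on the background metric), letting $j\to\infty$, and absorbing $A_s$ via its bound in terms of $E$. Your treatment of the limit $A_{j,s}\to A_s$ and of $A_s\le E+\|\varphi_0\|_{L^\infty}\int e^{(n+1)F}\omega_0^n\,dt$ is, if anything, slightly more careful than the paper's.
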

We still lack a uniform bound for the term $E$.  In the case of complex Monge-Ampere,  we did it quite easily,  using the exponential integral bound for the $\omega_0$-psh function.  However,  in the case of complex Hessian equations,  all we have is that $(-\partial_t\varphi,\lambda[h_{\varphi}])\in\Gamma$,  and we no longer have an exponential bound for the solution $\varphi$.  So we have to work harder to achieve this bound.  To be more precise,  we have the following estimates:
\begin{prop}
Let $\varphi$ solves (\ref{3.1}) such that the right hand side satisfies $Ent_p(F):=\int_{[0,T]\times M}e^{(n+1)F}(|F|^p+1)\omega_0^ndt<\infty$ for some $p>0$.  Then we have:
\begin{enumerate}
\item If $p<n+1$,  then there exists a constant $\beta_0$,  which depends only on the structural constants $c_0$,  $C_0$,  background metric,  and upper bound for $Ent_p(F)$,  such that
\begin{equation*}
\sup_{[0,T]}\int_M\exp\big(\beta_0((-\varphi)^+)^{\frac{n+1}{n+1-p}}\big)\omega_0^n\le C.
\end{equation*}
Here $C$ has the same dependence as $\eps$,  but additionally on $T$ and $||\varphi_0||_{L^{\infty}}$.
\item If $p\ge n+1$,  then for any $N>0$,  there exists a constant $\beta_0$,  which depends only on the structural constants $c_0$,  $C_0$,  the background metric,  and upper bound for $Ent_p(F)$,  such that
\begin{equation*}
\sup_{t\in[0,T]}\int_M\exp\big(\beta_0((-\varphi)^+)^N\big)\omega_0^n\le C.
\end{equation*}
Here $C$ has the same dependence as $\eps$,  but additionally on $T$,  $||\varphi_0||_{L^{\infty}}$ and the choice of $N$.
\end{enumerate}
\end{prop}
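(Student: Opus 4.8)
The idea is that Proposition \ref{p3.4} already gives the desired type of exponential integrability, except that the a priori uncontrolled quantity $E=\int_{[0,T]\times M}(-\varphi)e^{(n+1)F}\omega_0^ndt$ appears on its right-hand side; in the Monge--Amp\`ere case $E$ was bounded in Corollary \ref{c2.2} from the exponential integrability of $\omega_0$-psh functions, but here $\varphi$ need not be $\omega_0$-psh, so the plan is to replace that step by a bootstrap driven by $Ent_p(F)$.

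Step 1: a first, $E$-free bound. Write $u_s=(-\varphi-s)^+$ and $a=\tfrac{n+2}{n+1}$, and note that Proposition \ref{p3.4} together with $e^x\ge x^k/k!$ shows that $u_s$ lies in the Orlicz space of $e^{cA_s^{-1/(n+2)}u_s^{a}}$ with norm $\lesssim A_s^{1/(n+2)}(1+E)^{1/a}$. Pairing this against $e^{(n+1)F}$ by Young duality (the conjugate Young function of $t\mapsto e^{t^{a}}$ behaves like $\sigma\mapsto\sigma(\log\sigma)^{1/a}$, and an elementary inequality of the type of Lemma \ref{l2.7} makes this explicit), together with the splitting $\{|F|\le M\}\cup\{|F|>M\}$ --- on the first piece one uses $e^{(n+1)M}\|u_s\|_{L^1}$, on the second $Ent_p(F)$ --- I would bound $A_s$ and $E$ in terms of $Ent_p(F)$ modulo a term proportional to $A_s$; since $A_s\to0$ as $s\to\infty$, taking $s$ large enough absorbs that term and yields $A_s\le C$, hence $E\le C$. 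Feeding $E\le C$ back into Proposition \ref{p3.4} gives the genuine bound $\sup_t\int_M e^{\beta_1A_s^{-1/(n+2)}u_s^{a}}\omega_0^n\le C$ for every $s\ge\|\varphi_0\|_{L^\infty}$, and since $(-\varphi)^+\le2u_s$ on $\{-\varphi>2s\}$ this already gives $\sup_t\int_M e^{\beta((-\varphi)^+)^{a}}\omega_0^n\le C$ for a fixed $\beta>0$.

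Step 2: the De Giorgi iteration. With $E$ now under control, the argument runs parallel to Lemmas \ref{l2.7New} and \ref{l2.8}: using Lemma \ref{l2.7} and Proposition \ref{p3.4} one gets $\int_{[0,T]\times M}u_s^{\frac{p(n+2)}{n+1}}e^{(n+1)F}\omega_0^ndt\le CA_s^{\frac{p}{n+1}}$, and H\"older with respect to the measure $e^{(n+1)F}\omega_0^ndt$ then yields
\begin{equation*}
r\,\phi(s+r)\le A_s\le B_0\,\phi(s)^{1+\delta},\qquad \phi(s):=\int_{\{-\varphi>s\}}e^{(n+1)F}\omega_0^ndt,\qquad \delta=\tfrac1{n+1}-\tfrac1p .
\end{equation*}
When $p>n+1$ this is exactly the hypothesis of Lemma \ref{l2.8} and forces $\phi\equiv0$ for $s$ large, i.e. an $L^\infty$ bound; when $p\le n+1$ one instead runs the induction along an increasing sequence of levels $s_k\to\infty$, obtaining a quantitative decay of $\phi$, which when re-inserted into Proposition \ref{p3.4} upgrades the exponent in the exponential integrability of $(-\varphi)^+$ with respect to $\omega_0^n$ from $a$ towards $\frac{n+1}{n+1-p}$ --- the value at which the bound $\int e^{(n+1)F}|F|^{(n+1-p)/(n+1)}\omega_0^ndt\le Ent_p(F)$ ceases to be available, so the iteration saturates there. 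If instead $p\ge n+1$ the constraint disappears and the iteration continues to reach any prescribed $N$. Reading off the exponent in each case gives the two statements, with $\beta_0$ depending on $c_0$, $C_0$, the background metric and an upper bound for $Ent_p(F)$ (and on $N$ in case (2)), and $C$ depending in addition on $T$ and $\|\varphi_0\|_{L^\infty}$.

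The main obstacle is Step 1, namely closing the inequality for $A_s$: since Proposition \ref{p3.4} carries $e^{CE}$ with $E$ comparable to $A_{s_0}$, one cannot merely take Orlicz norms, and the argument must exploit both the freedom to let $s\to\infty$ (shrinking $A_s$) and a Young split tailored to the weak hypothesis $p>0$ --- in particular, for $p<1$ the crude entropy inequality $xy\le x\log x+e^{y-1}$ is useless since $\int|F|e^{(n+1)F}\omega_0^ndt$ need not be finite, so the pairing has to go through the exponent-$a$ Orlicz norm supplied by Proposition \ref{p3.4} itself. The secondary technical point is the bookkeeping of the exponents through the iteration so that they close precisely at $\frac{n+1}{n+1-p}$.
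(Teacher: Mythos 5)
Your Step 1 is where the argument breaks, and the gap is a genuine circularity rather than a technical bookkeeping issue. Proposition \ref{p3.4} bounds the Orlicz norm of $u_s=(-\varphi-s)^+$ by $C\exp(CE)$, where $E$ is (up to $s\,Ent_p(F)$) the very quantity $A_s$ you are trying to control; after the duality pairing your inequality has the shape $A_s\lesssim A_s^{\frac{1}{n+2}}(1+E)^{\frac{n+1}{n+2}}D$ with $E\le A_s+s\,Ent_p(F)$, and since the two exponents sum to $1$, the right-hand side grows linearly in $A_s$ with a constant you cannot force below $1$; such a self-inequality is consistent with $A_s$ arbitrarily large and yields no a priori bound. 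The proposed rescue ``$A_s\to0$ as $s\to\infty$'' is only qualitative: the threshold in $s$ depends on the unknown solution, so nothing uniform comes out. Two further inputs you implicitly use are also unavailable here: on $\{|F|\le M\}$ you need $\|u_s\|_{L^1(\omega_0^n dt)}$, but in the Hessian setting $\varphi$ is not $\omega_0$-psh and there is no a priori $L^1$ (let alone exponential) bound for it --- this is precisely the difficulty the proposition is meant to overcome; and the dual Orlicz norm $\|e^{(n+1)F}\|_{L(\log L)^{1/a}}$ with $a=\frac{n+2}{n+1}$ requires $p\ge\frac{n+1}{n+2}$, whereas the statement allows any $p>0$. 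Step 2 is then moot, and in any case the ``exponent-upgrading'' iteration is not a mechanism: re-inserting a decay of $\phi$ into Proposition \ref{p3.4} does not change the exponent $\frac{n+2}{n+1}$ appearing there, so there is no reason the scheme converges to $\frac{n+1}{n+1-p}$ (note that for $p<\frac{n+1}{n+2}$ the target exponent is actually \emph{smaller} than $a$, which already signals that the claimed saturation picture is off).

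The paper's proof uses a different, and here essential, idea: introduce an auxiliary solution $\psi$ of the \emph{parabolic Monge--Amp\`ere} equation $(-\partial_t\psi)\omega_\psi^n=e^{(n+1)F}(F^2+1)^{p/2}\omega_0^n/Ent_p(F)$ with $\psi|_{t=0}=0$, so that Corollary \ref{c2.2} applies to $\psi$ and gives $\sup_t\int_M e^{-\alpha_0\psi}\omega_0^n\le C_*$. One then proves the pointwise comparison $\eps(-\varphi-s)\le \eps t+(-\psi+\Lambda)^{\alpha}+C$ with $\alpha=1-\frac{p}{n+1}$ (resp.\ $\alpha=\frac1N$), by applying the parabolic Alexandrov--Bakelman--Pucci (Krylov--Tso) estimate to $e^{\rho}\eta$, $\rho=-\eps t+\eps(-\varphi-s)-(-\psi+\Lambda)^{\alpha}$, with a cutoff $\eta$ near the maximum point: on the contact set one checks $(-\partial_t\varphi,\lambda[h_\varphi])\in\Gamma_+$, so the structural hypotheses (3), (4) bound the ABP integrand by $\exp(\alpha_0(-\psi+\Lambda)^{(1-\alpha)(n+1)/p})\le e^{\alpha_0\Lambda}e^{-\alpha_0\psi}$, the choice of $\eps$ tuned to $Ent_p(F)$ making the exponent exactly $\alpha_0$; a smallness condition on the time length (depending only on background data) is absorbed by iterating on subintervals $[kT_0,(k+1)T_0]$. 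Exponentiating the comparison converts the integrability of $e^{-\alpha_0\psi}$ into the stated bound with exponent $\frac1\alpha=\frac{n+1}{n+1-p}$ (resp.\ $N$). In short, the weight $e^{(n+1)F}(F^2+1)^{p/2}$ and the entropy enter through the \emph{construction of the comparison function} $\psi$, not through an Orlicz pairing against the estimate of Proposition \ref{p3.4}; without such an auxiliary function your scheme cannot get off the ground.
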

The proof of this proposition follows similar ideas as the proof for the Moser-Trudinger inequality above (Proposition \ref{p2.5} and \ref{p3.4}).

Let $\psi$ be the solution to the following problem:
\begin{equation}\label{3.4}
\begin{split}
&(-\partial_t\psi)\omega_{\psi}^n=\frac{e^{(n+1)F}(F^2+1)^{\frac{p}{2}}\omega_0^n}{\int_{[0,T]\times M}e^{(n+1)F}(F^2+1)^{\frac{p}{2}}\omega_0^ndt},\\
&\psi|_{t=0}=0.
\end{split}
\end{equation}
Note that the integral of the right hand side of (\ref{3.4}) is 1,  and the initial value of $\psi$ is 0,  Corollary \ref{c2.2} implies that 
\begin{equation}\label{3.5NNew}
\sup_{t\in[0,T]}\int_Me^{-\alpha_0\psi}\omega_0^n\le C_*.
\end{equation}
Here both $\alpha_0$ and $C$ depends only on the background metric.

We have the following lemma holds:
\begin{lem}
Let $\varphi$ solve (\ref{3.1}) and $s\ge ||\varphi_0||_{L^{\infty}}$ 
We have:
\begin{enumerate}
\item If $p<n+1$,  then there exist constants $\eps$,  $\Lambda$,  depending only on the structural constants $c_0$,  $C_0$,  background metric,  an upper bound for $Ent_p(F)$,  such that 
\begin{equation*}
\eps(-\varphi-s)\le (-\psi+\Lambda)^{\frac{n+1}{n+1-p}}+C.
\end{equation*}
Here $C$ has the same dependence as $\eps$,  $\Lambda$,  but additionally on $T$.
\item If $p\ge n+1$,  then for any $N>0$,  there exists constants $\eps$,  $\Lambda$,  depending on the structural constants $c_0$,  $C_0$,  the background metric,  an upper bound for $Ent_p(F)$,  and $N$ such that
\begin{equation*}
\eps(-\varphi-s)\le (-\psi+\Lambda)^N+C.
\end{equation*}
Here $C$ has the same dependence as $\eps$,  $\Lambda$,  but additionally on $T$.
\end{enumerate}
\end{lem}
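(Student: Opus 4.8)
The plan is to establish both cases by a maximum–principle comparison with the auxiliary function $\psi$ of (\ref{3.4}), in the spirit of Lemma \ref{l2.4} and of the Hessian barrier lemma used for Proposition \ref{p3.4}. Since the right–hand side of (\ref{3.4}) has unit total integral, Corollary \ref{c2.2} applied to $\psi$ gives (\ref{3.5NNew}), i.e. $\sup_t\int_M e^{-\alpha_0\psi}\omega_0^n\le C_*$; moreover $\psi\le 0$, so $w:=-\psi+\Lambda\ge\Lambda$ for any $\Lambda>0$. I would look for a bound of the shape $\eps(-\varphi-s)\le w^{\theta}+C$ with
\[
\theta=\frac{n+1-p}{n+1}\in(0,1)\quad\text{if }p<n+1,\qquad \theta=\tfrac1N\ (\text{any }N>0)\quad\text{if }p\ge n+1,
\]
for $\eps,\Lambda,C$ depending only on $c_0$, $C_0$, the background metric and an upper bound for $Ent_p(F)$ (and on $T$, resp. on $N$, for $C$). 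Granting this, raising to the power $1/\theta$ and inserting into (\ref{3.5NNew}) yields $\sup_t\int_M\exp(\beta_0((-\varphi)^+)^{1/\theta})\omega_0^n\le C$, which is exactly the Proposition; so the lemma (read with barrier exponent $\theta=1/(\text{exponent in the statement})$) is the whole point.

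To prove $\eps(-\varphi-s)\le w^{\theta}+C$ I would run the maximum principle on $\Phi=\eps(-\varphi-s)-w^{\theta}$, using the linearized operator $Lu=-\partial_0 f\,\partial_t u+\frac{\partial f}{\partial h_{ij}}g^{i\bar k}\partial_{j\bar k}u$ from the proof of Proposition \ref{p3.4}. In normal coordinates, structural condition (4) together with $f=e^F$ gives $L(-\varphi)\ge -C_0 e^F+\frac{\partial f}{\partial h_{ij}}g_{i\bar j}$, while, because $0<\theta<1$, the complex Hessian of $w^{\theta}$ contains a favourable negative gradient term that can be dropped, leaving $-L(w^{\theta})\ge\theta w^{\theta-1}\bigl(\partial_0 f(-\partial_t\psi)+\frac{\partial f}{\partial h_{ij}}(\omega_\psi)_{i\bar j}\bigr)-\theta w^{\theta-1}\frac{\partial f}{\partial h_{ij}}g_{i\bar j}$. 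Choosing $\Lambda$ large enough that $\theta\Lambda^{\theta-1}\le\eps$ cancels the trace terms; then the arithmetic–geometric mean inequality on $\frac{\partial f}{\partial h_{ij}}(\omega_\psi)_{i\bar j}$, the equation (\ref{3.4}) for $\det\omega_\psi$, condition (3) in the form $\det(\partial f/\partial h_{ij})\ge c_0/\partial_0 f$, and Young's inequality absorbing $(-\partial_t\psi)^{-1/n}$ against $\partial_0 f(-\partial_t\psi)$ (after which the $\partial_0 f$–dependence cancels, exactly because of the shape of (3)) yield, with the same bookkeeping as in Lemma \ref{l2.4},
\[
L\Phi\ \ge\ e^F\Bigl(-\eps C_0+c_*\,w^{\theta-1}(F^2+1)^{\frac{p}{2(n+1)}}\Bigr),\qquad c_*=c_*(n,\theta,c_0,Ent_p(F))>0.
\]

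The delicate step — and where the present lemma genuinely departs from Lemma \ref{l2.4} — is closing this inequality at an interior (in time) positive maximum $(t_0,x_0)$ of $\Phi$, which lies in $(0,T]\times M$ as soon as $s\ge\|\varphi_0\|_{L^{\infty}}$ since then $\Phi(0,\cdot)\le-\Lambda^{\theta}<0$. In Lemma \ref{l2.4} the factor $(-\varphi-s)^+$ on the right of the auxiliary equation produced exactly the needed cancellation; here (\ref{3.4}) instead carries the weight $(F^2+1)^{p/2}$, so at the maximum one must balance the power $w^{\theta-1}$ coming from the barrier against $(F^2+1)^{p/(2(n+1))}$. The value $\theta=\frac{n+1-p}{n+1}$ is forced by requiring this balance to land exactly on the exponential–integrability scale of $\psi$, i.e. $\frac{p}{2(n+1)(1-\theta)}=\frac12$; when $p\ge n+1$ no such $\theta\in(0,1)$ exists and one is driven to take $\theta=1/N$ arbitrarily small, at the price of letting $\eps,\Lambda,C$ degenerate as $N\to\infty$. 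I expect the real work to be in controlling the contribution of $\{|F|\text{ large}\}$ at the comparison point, handled via the finiteness of $Ent_p(F)=\int_{[0,T]\times M}e^{(n+1)F}(|F|^p+1)\omega_0^n\,dt$; once that is done one gets $\sup_{[0,T]\times M}\Phi\le C$ with the stated dependence of constants, and the lemma — hence the Proposition, and with it the missing bound on $E$ in Proposition \ref{p3.4} — follows.
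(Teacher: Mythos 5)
Your setup — the barrier $\Phi=\eps(-\varphi-s)-(-\psi+\Lambda)^{\theta}$ built from the auxiliary solution $\psi$ of (\ref{3.4}), the identification of the critical exponent $\theta=\frac{n+1-p}{n+1}$ (resp.\ $\theta=1/N$ when $p\ge n+1$), and the way the lemma feeds into the Moser--Trudinger bound via (\ref{3.5NNew}) — all agree with the paper. But the mechanism you propose for closing the estimate, namely evaluating $L\Phi\le 0$ at an interior positive maximum and then ``controlling the contribution of $\{|F|\ \text{large}\}$ at the comparison point via the finiteness of $Ent_p(F)$,'' is exactly the step that fails, and it is where this lemma genuinely departs from Lemma \ref{l2.4}. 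At the maximum point your differential inequality only yields $(F^2+1)^{\frac{p}{2(n+1)}}\le C\,(-\psi+\Lambda)^{1-\theta}$ \emph{at that single point}. Unlike Lemma \ref{l2.4}, the right-hand side of (\ref{3.4}) carries no factor of $(-\varphi-s)$, so the positivity of $\Phi$ at the contact point gives you nothing to substitute back in; and neither $F$ nor $-\psi$ is pointwise bounded — all you have are the integral bounds $Ent_p(F)<\infty$ and $\sup_t\int_M e^{-\alpha_0\psi}\omega_0^n\le C_*$, which say nothing about the values of $F$ or $\psi$ at one contact point. Consequently the pointwise maximum principle does not produce any bound on $\sup\Phi$, and the ``delicate step'' you defer is not a technicality but the whole difficulty.

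The paper closes the argument with a different tool: it applies the parabolic Alexandrov--Bakelman--Pucci (Krylov--Tso) maximum principle to $u=e^{\rho}\eta$, where $\rho=-\eps t+\eps(-\varphi-s)-(-\psi+\Lambda)^{\alpha}$ and $\eta$ is a cutoff equal to $1$ at the point where $\rho$ is maximal and to $1-\theta$ outside a coordinate ball $B_{r_0}(x_0)$. The proof then requires: (i) showing (Claim \ref{claim3.7}) that on the contact set $E$ one has $(-\partial_t\varphi,\lambda[h_{\varphi}])\in\Gamma_+$, so that the structural hypotheses (3)--(4) are legitimately used there; (ii) on $E\cap\{R<0\}$, deducing the pointwise bound $e^{F}\le\exp\big(c\,(-\psi+\Lambda)^{\frac{(1-\alpha)(n+1)}{p}}\big)$ with $c$ small, which is precisely where $\eps$ and $\alpha$ are chosen so that the exponent is (at most) linear in $-\psi$ with coefficient $\le\alpha_0$; (iii) bounding the resulting ABP integral by $e^{(2n+1)K}e^{\alpha_0\Lambda}\,T\,\sup_t\int_M e^{-\alpha_0\psi}\omega_0^n$ via (\ref{3.5NNew}); and (iv) choosing a time step $T_0$, depending only on the structural data, small enough that the ABP term is absorbed into $(1-\theta)e^{K}$, and then iterating the argument on $[T_0,2T_0]$, $[2T_0,3T_0]$, and so on. None of these ingredients appears in your proposal, and (i) and (iv) in particular are not cosmetic: it is only by converting the pointwise contact inequality into an integral over the contact set that the exponential integrability of $\psi$ and the entropy bound on $F$ can be brought to bear at all.
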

\begin{proof}
For $p<n+1$,  we choose $\alpha=1-\frac{p}{n+1}$,  and if $p\ge n+1$,  we choose $\alpha=\frac{1}{N}$.
We define
\begin{equation*}
\rho=-\eps t+\eps(-\varphi-s)-(-\psi+\Lambda)^{\alpha}.
\end{equation*}
In the above
\begin{equation*}
\eps=\frac{1}{C_0}(n+1)\alpha\big(\frac{\alpha_0}{n+1}\big)^{\frac{p}{n+1}}\big(c_0Ent_p^{-1}(F)\big)^{\frac{1}{n+1}},\,\,\,\Lambda=(\frac{4\alpha}{\eps})^{\frac{1}{1-\alpha}}.
\end{equation*}
Then we may compute
\begin{equation*}
\begin{split}
&L\rho=\partial_0f(-\rho_t)+\frac{\partial f}{\partial h_{ij}}g^{i\bar{k}}\rho_{j\bar{k}}=-\partial_0f(-\eps-\eps\partial_t\varphi-\alpha(-\psi+\Lambda)^{\alpha-1}(-\partial_t\psi))\\
&+\frac{\partial f}{\partial h_{ij}}g^{i\bar{k}}\big(-\eps\varphi_{j\bar{k}}+\alpha(-\psi+\Lambda)^{\alpha-1}\psi_{j\bar{k}}+\alpha(1-\alpha)(-\psi+\Lambda)^{\alpha-2}\psi_j\psi_{\bar{k}}\big)\\
&\ge -\eps \partial_0f(-\partial_t\varphi)+\eps\partial_0f+\alpha(-\psi+\Lambda)^{\alpha-1}\partial_0f(-\partial_t\psi)+\eps\frac{\partial f}{\partial h_{ij}}(-1)[h_{\varphi}]^i_j+\eps\frac{\partial f}{\partial h_{ii}}\\
&+\alpha (-\psi+\Lambda)^{\alpha-1}\frac{\partial f}{\partial h_{ij}}g^{i\bar{k}}(\omega_{\psi})_{j\bar{k}}-\alpha(-\psi+\Lambda)^{\alpha-1}\frac{\partial f}{\partial h_{ii}}.
\end{split}
\end{equation*}

Because of our choice of $\eps$ and $\Lambda$,  we have that 
\begin{equation}\label{3.6NN}
\frac{\eps}{2} \ge \alpha\Lambda^{\alpha-1}.
\end{equation}
So that in the above,  
\begin{equation*}
\eps\frac{\partial f}{\partial h_{ii}}-\alpha(-\psi+\Lambda)^{\alpha-1}\frac{\partial f}{\partial h_{ii}}>\frac{\eps}{2}\frac{\partial f}{\partial h_{i\bar{i}}}.
\end{equation*}
So that we get
\begin{equation*}
\begin{split}
&L\rho\ge -\eps \partial_0f(-\partial_t\varphi)+\eps \partial_0f+\alpha(-\psi+\Lambda)^{\alpha-1}\partial_0f(-\partial_t\psi)\\
&+\eps\frac{\partial f}{\partial h_{ij}}(-1)[h_{\varphi}]_j^i+\alpha(-\psi+\Lambda)^{\alpha-1}\frac{\partial f}{\partial h_{ij}}g^{i\bar{k}}(\omega_{\psi})_{j\bar{k}}+\frac{\eps}{2}\frac{\partial f}{\partial h_{ii}}.
\end{split}
\end{equation*}

Assume that $\rho$ achieves positive maximum on $[0,T]\times M$ at $(t_0,x_0)$.  We may choose a neighborhood of $x_0$ such that it is equivalent to a ball $B_{r_0}(x_0)$ under local coordinates.  We choose $\theta=\min(1,\frac{\eps r_0^2}{400})$,  and let $\eta$ be a cut-off function on $M$ satisfying the following conditions:
\begin{equation*}
\eta(x_0)=1,\,\,\eta\equiv 1-\theta,\,\textrm{on $M-B_{r_0}(x_0)$},\,\,|\nabla\eta|\le \frac{10\theta}{r_0},\,\,|D^2\eta|\le \frac{10\theta}{r_0^2}.
\end{equation*}
Then we can compute
\begin{equation*}
\begin{split}
&L(e^{\rho}\eta)=e^{\rho}\eta L\rho+e^{\rho}\eta\frac{\partial f}{\partial h_{ij}}g^{i\bar{k}}\rho_j\rho_{\bar{k}}+e^{\rho}L\eta+2e^{\rho}Re\big(\frac{\partial f}{\partial h_{ij}}g^{i\bar{k}}\rho_j\eta_{\bar{k}}\big)\\
&\ge -\eps e^{\rho}\eta(\partial_0f(-\partial_t\varphi)+\frac{\partial f}{\partial h_{ij}}[h_{\varphi}]_j^i)+e^{\rho}\eps \partial_0f\eta\\
&+e^{\rho}\alpha(-\psi+\Lambda)^{\alpha-1}\big(\partial_0f(-\partial_t\psi)+\frac{\partial f}{\partial h_{ij}}g^{i\bar{k}}(\omega_{\psi})_{j\bar{k}}\big)\eta+e^{\rho}\frac{\eps}{2}\frac{\partial f}{\partial h_{ii}}\eta\\
&+e^{\rho}L\eta+e^{\rho}\eta\frac{\partial f}{\partial h_{ij}}g^{i\bar{k}}\rho_j\rho_{\bar{k}}+2e^{\rho}Re(\frac{\partial f}{\partial h_{ij}}g^{i\bar{k}}\rho_j\eta_{\bar{k}}).
\end{split}
\end{equation*}
In the last term above,  we can estimate:
\begin{equation*}
2e^{\rho}Re\big(\frac{\partial f}{\partial h_{ij}}g^{i\bar{k}}\rho_j\eta_{\bar{k}}\big)\ge -e^{\rho}\frac{\partial f}{\partial h_{ij}}g^{i\bar{k}}\rho_j\rho_{\bar{k}}-e^{\rho}\frac{\partial f}{\partial h_{ij}}g^{i\bar{k}}\eta_j\eta_{\bar{k}}\ge -e^{\rho}\frac{\partial f}{\partial h_{ij}}g^{i\bar{k}}\rho_j\rho_{\bar{k}}-\frac{100\theta^2}{r_0^2}e^{\rho}\frac{\partial f}{\partial h_{ii}}.
\end{equation*}

Also 
\begin{equation*}
e^{\rho}L\eta\ge -e^{\rho}\frac{\partial f}{\partial h_{ii}}\frac{10\theta}{r_0^2}.
\end{equation*}
Therefore
\begin{equation*}
\begin{split}
&L(e^{\rho}\eta)\ge -\eps e^{\rho}\eta\big(\partial_0f(-\partial_t\varphi)+\frac{\partial f}{\partial h_{ij}}(\omega_{\varphi})_{i\bar{j}}\big)\\
&+\eps e^{\rho}\partial_0f\eta+e^{\rho}\alpha(-\psi+\Lambda)^{\alpha-1}(\partial_0f(-\partial_t\psi)+\frac{\partial f}{\partial h_{ij}}(\omega_{\psi})_{i\bar{j}})\eta\\
&+e^{\rho}\frac{\partial f}{\partial h_{ij}}g_{i\bar{j}}\big(\frac{\eps}{2}-\frac{100\theta}{r_0^2}-\frac{100\theta^2}{r_0^2}\big)\eta.
\end{split}
\end{equation*}
Because of our choice of $\eps$ and $\theta$,  we have that
\begin{equation}\label{3.7NN}
\frac{\eps}{2}-\frac{100\theta}{r_0^2}-\frac{100\theta^2}{r_0^2}>0.
\end{equation}
Therefore
\begin{equation}\label{3.6}
\begin{split}
&L(e^{\rho}\eta)\ge e^{\rho}\eta\big(-\eps\partial_0f(-\partial_t\varphi)-\eps\frac{\partial f}{\partial h_{ij}}[h_{\varphi}]^i_j+\eps\partial_0f\\
&+\alpha(-\psi+\Lambda)^{\alpha-1}(\partial_0f(-\partial_t\psi)+\frac{\partial f}{\partial h_{ij}}g^{i\bar{k}}(\omega_{\psi})_{j\bar{k}})\big).
\end{split}
\end{equation}
If we put $u=e^{\rho}\eta$ and denote the right hand side of (\ref{3.6}) to be $R$,  then (\ref{3.6}) is equivalent to:
\begin{equation*}
-\partial_tu+\frac{1}{\partial_0f}\frac{\partial f}{\partial h_{ij}}\partial_{i\bar{j}}u\ge \frac{1}{\partial_0f}R.
\end{equation*}
We wish to apply the parabolic Alexandrov maximum principle (Lemma ?) in $[0,T]\times B_{r_0}(x_0)$ so that 
\begin{equation}\label{3.7}
\sup_{[0,T]\times B_{r_0}(x_0)}e^{\rho}\eta\le \max\big(\sup_{[0,T]\times \partial B_{r_0}(x_0)}e^{\rho}\eta,\sup_{\{0\}\times B_{r_0}(x_0)}e^{\rho}\eta\big)+C_nr_0\bigg(\int_{E}\frac{(\frac{1}{\partial_0f} R^-)^{2n+1}}{(\det(\frac{1}{\partial_0f}\frac{\partial f}{\partial h_{ij}}))^2}\omega_0^ndt\bigg)^{\frac{1}{2n+1}}.
\end{equation}
In the above,  $R^-=\max(R,0)$,  $E$ is the set of $(t,x)$ on which $\partial_tu\ge 0$,  $D^2u\le 0$.  
Without loss of generality,  we may assume that 
\begin{equation}\label{3.10New}
\sup_{[0,T]\times M}\rho=K>0.
\end{equation}
Also from our choice of $x_0$,  we would have $\sup_{[0,T]\times B_{r_0}(x_0)}e^{\rho}\eta\ge e^K$.  On the right hand side,  we have
\begin{equation}\label{3.11New}
\sup_{[0,T]\times \partial B_{r_0}(x_0)}e^{\rho}\eta\le (1-\theta)e^K,\,\,\,\sup_{\{0\}\times B_{r_0}(x_0)}e^{\rho}\eta\le 1.
\end{equation}
The second inequality is due to that,  according to our choice of $s$,  we have $\rho\le 0$ for $t=0$.
It only remains to estimate the integral in (\ref{3.7}).  We claim that:
\begin{claim}\label{claim3.7}
On the set $E$,  we have that $\partial_t\varphi< 0$,  $\omega_0+\sqrt{-1}\partial\bar{\partial}\varphi>0$.  In particular,  we have that on $E$,  $(-\partial_t\varphi,\lambda[h_{\varphi}])\in \Gamma_+$.  (the positive cone)
\end{claim}
Indeed,  on the set $E$,  we have that $\partial_tu\ge 0$,  $u_{i\bar{j}}\le 0$,  which translates to
\begin{equation*}
0\le \partial_tu=e^{\rho}\partial_t\rho\eta=e^{\rho}(-\eps-\eps\partial_t\varphi+\alpha(-\psi+\Lambda)^{\alpha-1}\partial_t\psi),
\end{equation*}
Note that $\partial_t\psi\le 0$,  we immediately have $\partial_t\varphi<0$.
On the other hand,
\begin{equation*}
\begin{split}
&0\ge u_{i\bar{j}}=e^{\rho}\rho_{i\bar{j}}+e^{\rho}\rho_i\rho_{\bar{j}}+e^{\rho}\eta_{i\bar{j}}+e^{\rho}(\rho_i\eta_{\bar{j}}+\rho_{\bar{j}}\eta_i)\\
&\ge e^{\rho}\big(-\eps\varphi_{i\bar{j}}+\alpha(-\psi+\Lambda)^{1-\alpha}\psi_{i\bar{j}}+\alpha(1-\alpha)(-\psi+\Lambda)^{\alpha-2}\psi_i\psi_{\bar{j}}\big)\\
&+e^{\rho}\rho_i\rho_{\bar{j}}-e^{\rho}\frac{100\theta}{r_0^2}g_{i\bar{j}}-e^{\rho}\rho_i\rho_{\bar{j}}-e^{\rho}\eta_i\eta_{\bar{j}}\\
&\ge e^{\rho}\big(-\eps(g_{i\bar{j}}+\varphi_{i\bar{j}})+\alpha(-\psi+\Lambda)^{\alpha-1}(g_{i\bar{j}}+\psi_{i\bar{j}})+
(\eps-\alpha(-\psi+\Lambda)^{\alpha-1}\\
&-\frac{100\theta}{r_0^2}-\frac{100\theta^2}{r_0^2})g_{i\bar{j}}\big).
\end{split}
\end{equation*}
Note that because of (\ref{3.6NN}) and (\ref{3.7NN}),  we have that $\eps-\alpha(-\psi+\Lambda)^{\alpha-1}-\frac{100\theta}{r_0^2}-\frac{100\theta^2}{r_0^2}>0$.  Also we have $g_{i\bar{j}}+\psi_{i\bar{j}}>0$,  therefore $g_{i\bar{j}}+\varphi_{i\bar{j}}>0$ and the claim is proved.

The above Claim \ref{claim3.7} guarantees that we can use structural assumption (3) and (4) to estimate the integral on the right hand side of (\ref{3.7}). 
We first have that
\begin{equation}\label{3.10NNN}
\int_E\frac{(\frac{1}{\partial_0f}R^-)^{2n+1}}{(\det(\frac{1}{\partial_0f}\frac{\partial f}{\partial h_{ij}}))^2}\omega_0^ndt=\int_{E}\frac{\partial_0f(R^-)^{2n+1}}{(\partial_0f\det(\frac{\partial f}{\partial h_{ij}}))^2}\omega_0^ndt\le \int_{\{R<0\}\cap E}c_0^{-2}\partial_0f(R^-)^{2n+1}\omega_0^ndt.
\end{equation}
Now we go back to (\ref{3.6}),  and found that for $(t,x)\in E$,  we have,  using the structural assumption on $f$:
\begin{equation*}
\partial_0f(-\partial_t\varphi)+\frac{\partial f}{\partial h_{ij}}[h_{\varphi}]_i^j\le C_0f.
\end{equation*}
Also from Arithemetic-Geometric Inequality:
\begin{equation*}
\begin{split}
&\partial_0f(-\partial_t\psi)+\frac{\partial f}{\partial h_{ij}}g^{i\bar{k}}(\omega_{\psi})_{j\bar{k}}\ge (n+1)\big(\partial_0f\det(\frac{\partial f}{\partial h_{ij}})(-\partial_t\psi)\frac{\omega_{\psi}^n}{\omega_0^n}\big)^{\frac{1}{n+1}}\\
&\ge (n+1)\big(c_0Ent_p^{-1}(F)e^{(n+1)F}(F^2+1)^{\frac{p}{2}}\big)^{\frac{1}{n+1}}.
\end{split}
\end{equation*}
Therefore,  on the set $E$,  the right hand side of (\ref{3.6}) satisfies 
\begin{equation}\label{3.10NN}
R\ge e^{\rho}\eta f\big(-\eps C_0+\eps\frac{\partial_0f}{f}+\alpha(n+1)c_0^{\frac{1}{n+1}}Ent_p^{-\frac{1}{n+1}}(F)(F^2+1)^{\frac{p}{2(n+1)}}(-\psi+\Lambda)^{\alpha-1}\big).
\end{equation}
Therefore,  on the set $E\cap\{R<0\}$,  we have that
\begin{equation*}
\begin{split}
&\partial_0f\le C_0f,\,\,\,(F^2+1)^{\frac{p}{2(n+1)}}\le \eps C_0\alpha^{-1}(n+1)^{-1}c_0^{-\frac{1}{n+1}}Ent_p^{\frac{1}{n+1}}(F)(-\psi+\Lambda)^{1-\alpha},\\
&R^-\le e^{\rho}\eta f(\eps C_0).
\end{split}
\end{equation*}
Therefore
\begin{equation}\label{3.11NN}
f=e^F\le \exp\big((\eps C_0\alpha^{-1}(n+1)^{-1})^{\frac{n+1}{p}}c_0^{-\frac{1}{p}}Ent_p^{\frac{1}{p}}(F)(-\psi+\Lambda)^{\frac{(1-\alpha)(n+1)}{p}}\big).
\end{equation}
Combinning (\ref{3.10NN}) and (\ref{3.11NN}),  we may continue the estimate in (\ref{3.10NNN}):
\begin{equation*}
\begin{split}
&\int_{\{R<0\}\cap E}c_0^{-2}\partial_0f(R^-)^{2n+1}\omega_0^ndt\le \int_{[0,T]\times M}c_0^{-2}C_0f(\eps C_0)^{2n+1}e^{(2n+1)\rho}f^{2n+1}\omega_0^ndt\\
&\le \int_{[0,T]\times M}c_0^{-2}\eps^{2n+1}C_0^{2n+2}e^{(2n+1)\rho}\\
&\times\exp\big((2n+2)(\eps C_0\alpha^{-1}(n+1)^{-1})^{\frac{n+1}{p}}c_0^{-\frac{1}{p}}Ent_p^{\frac{1}{p}}(F)(-\psi+\Lambda)^{\frac{(1-\alpha)(n+1)}{p}}\big)\omega_0^ndt.
\end{split}
\end{equation*}
Note that because of our choice of $\eps$,  we have that
\begin{equation*}
(2n+2)(\eps C_0\alpha^{-1}(n+1)^{-1})^{\frac{n+1}{p}}c_0^{-\frac{1}{p}}Ent_p^{\frac{1}{p}}(F)\le \alpha_0.
\end{equation*}
Here $\alpha_0$ is the $\alpha$-invariant of the class $[\omega_0]$.  Therefore
\begin{equation}\label{3.15New}
\begin{split}
&\int_{\{R<0\}\cap E}c_0^{-2}\partial_0f(R^-)^{2n+1}\omega_0^ndt\\
&\le e^{(2n+1)K}\int_{[0,T]\times M}c_0^{-2}\eps^{2n+1}C_0^{2n+2}\exp\big(\alpha_0(-\psi+\Lambda)^{\frac{(1-\alpha)(n+1)}{p}}\big)\omega_0^ndt\\
&\le e^{(2n+1)K}c_0^{-2}\eps^{2n+1}C_0^{2n+2}\int_{[0,T]\times M}\exp\big(-\alpha_0\psi+\alpha_0\Lambda)\omega_0^ndt\\
&\le e^{(2n+1)K}c_0^{-2}\eps^{2n+1}C_0^{2n+2}e^{\alpha_0\Lambda}T\sup_{t\in[0,T]}\int_Me^{-\alpha_0\psi}\omega_0^n.
\end{split}
\end{equation}
We have seen in (\ref{3.5NNew}) that
\begin{equation*}
\sup_{t\in[0,T]}\int_Me^{-\alpha_0\psi}\omega_0^n\le C_*.
\end{equation*}
Here $C_*$ depends only on the background metric.
So that after combinning (\ref{3.7}),  (\ref{3.10New}),  (\ref{3.11New}),  (\ref{3.15New}),  we obtain that
\begin{equation}\label{3.16N}
e^K\le \max\big((1-\theta)e^K,e^{||\rho_0||_{L^{\infty}}}\big)+T^{\frac{1}{2n+1}}C_nr_0e^K\eps(c_0^{-2}C_0^{2n+2}e^{\alpha_0\Lambda}C_*)^{\frac{1}{2n+1}}.
\end{equation}
Therefore,  if we choose $T$ small enough so that
\begin{equation}\label{3.17N}
T^{\frac{1}{2n+1}}C_nr_0\eps(c_0^{-2}C_0^{2n+2}e^{\alpha_0\Lambda}C_*)^{\frac{1}{2n+1}}=\frac{\theta}{2},
\end{equation}
we would have that the max in (\ref{3.16N}) is achieved by $e^{||\rho_0||_{L^{\infty}}}$,  which gives us that $K\le ||\rho_0||_{L^{\infty}}.$
Denote the $T$ given by (\ref{3.17N}) to be $T_0$,  then we get an estimate for $\sup_{t\in[0,T_0]}\rho$.
Note that the $T_0$ given by (\ref{3.17N}) depends only on dimension,  the background metric,  and the structural constants $c_0$ and $C_0$,  we may repeat the same argument to estimate $\sup_{t\in[T_0,2T_0]}\rho$ and so on. 
\end{proof}
Here we state without proof the following parabolic Alexandrove maximum principle we used in the above proof (see \cite{Lieberman},  Theorem 7.1):
\begin{lem}
Let $\Omega\subset \mathbb{R}^n$ be a bounded domain and $u$ be a smooth function on $[0,T]\times \Omega$,  such that
\begin{equation*}
-\partial_tu+a^{ij}\partial_{ij}u\ge f.
\end{equation*}
Then 
\begin{equation*}
\sup_{[0,T]\times \Omega}u\le \sup_{\partial_P([0,T]\times \Omega)}u+C_n(\diam \Omega)^{\frac{n}{n+1}}\bigg(\int_E\frac{(f^-)^{n+1}}{\det a^{ij}}dxdt\bigg)^{\frac{1}{n+1}}.
\end{equation*}
In the above,  $\partial_P([0,T]\times \Omega)=(\{0\}\times \Omega)\cup([0,T]\times \partial\Omega)$,  $f^-=\max(-f,0)$ and
$E$ is the set of $(t,x)$ on which $\partial_tu\ge 0$ and $D_x^2u\le 0$.
\end{lem}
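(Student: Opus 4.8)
The plan is to establish this by the Krylov--Tso parabolic Alexandrov--Bakelman--Pucci argument, i.e.\ the parabolic analogue of the contact-set proof of the elliptic ABP estimate. First I would normalize: subtracting the constant $\sup_{\partial_P([0,T]\times\Omega)}u$ from $u$ changes neither $a^{ij}$, $f$, nor the differential inequality, so I may assume $u\le 0$ on $\partial_P([0,T]\times\Omega)$. Set $M=\sup_{[0,T]\times\Omega}u$; if $M\le 0$ there is nothing to prove, so assume $M>0$, attained at a point $(t^*,x^*)$ with $t^*>0$ and $x^*\in\Omega$; after translating coordinates I may also assume $x^*=0$, so that $\Omega\subset B_{\diam\Omega}(0)$.

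The heart of the argument is a covering estimate via the parabolic normal map. Call a point $(s,y)\in(0,t^*]\times\Omega$ a \emph{contact point} if $u(t,x)\le u(s,y)+\nabla u(s,y)\cdot(x-y)$ for all $0\le t\le s$ and $x\in\overline{\Omega}$, and let $\Gamma$ be the set of contact points; taking $x=y$ shows $\partial_tu(s,y)\ge 0$ and fixing $t=s$ shows $D_x^2u(s,y)\le 0$ there, so $\Gamma\subset E$. Define $\Psi(s,y)=\big(\nabla u(s,y),\,u(s,y)-\nabla u(s,y)\cdot y\big)\in\mathbb{R}^{n+1}$. I claim $\Psi(\Gamma)$ contains
\begin{equation*}
S=\big\{(p,h)\in\mathbb{R}^n\times\mathbb{R}:\ |p|<M/\diam\Omega,\ \ |p|\,\diam\Omega<h<M\big\}.
\end{equation*}
Given $(p,h)\in S$, the function $s\mapsto\max_{[0,s]\times\overline{\Omega}}(u-p\cdot x)$ is continuous and nondecreasing, equals $\max_{\overline{\Omega}}(u|_{t=0}-p\cdot x)\le|p|\,\diam\Omega<h$ at $s=0$ and is $\ge u(t^*,0)=M>h$ at $s=t^*$; let $s_h$ be the first time it equals $h$. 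One checks that at time $s_h$ this maximum is attained at a point $y_h$ that must be interior to $\Omega$ (otherwise $h\le -p\cdot y_h\le|p|\,\diam\Omega$), with $s_h>0$, and that there $\nabla u=p$, $D_x^2u\le 0$, $\partial_tu\ge 0$, $u-\nabla u\cdot y=h$; hence $(s_h,y_h)\in\Gamma$ and $\Psi(s_h,y_h)=(p,h)$. A direct integration gives $|S|=c_n M^{n+1}/(\diam\Omega)^n$ for a dimensional constant $c_n>0$.

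Next I would invoke the area formula for the $C^1$ map $\Psi$:
\begin{equation*}
c_n\frac{M^{n+1}}{(\diam\Omega)^n}=|S|\le|\Psi(\Gamma)|\le\int_E|\det D\Psi|\,dx\,dt.
\end{equation*}
A short computation (adding $y^{T}$ times the first $n$ rows of $D\Psi$ to its last row) gives $\det D\Psi=\partial_tu\cdot\det D_x^2u$, so $|\det D\Psi|=\partial_tu\cdot\det(-D_x^2u)$ on $E$. On $E$ the inequality $-\partial_tu+a^{ij}\partial_{ij}u\ge f$ forces $f\le 0$ (both terms on the left are $\le 0$ there), so $f^-\ge\partial_tu+\mathrm{tr}\big(A(-D_x^2u)\big)$ with $A=(a^{ij})$; weighted AM--GM applied to the nonnegative number $\partial_tu$ and the $n$ nonnegative eigenvalues of $A(-D_x^2u)$ yields
\begin{equation*}
f^-\ge(n+1)\big(\partial_tu\cdot\det A\cdot\det(-D_x^2u)\big)^{\frac1{n+1}},
\end{equation*}
i.e.\ $\partial_tu\cdot\det(-D_x^2u)\le(f^-)^{n+1}\big/\big((n+1)^{n+1}\det a^{ij}\big)$ on $E$. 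Feeding this into the previous display and taking $(n+1)$-st roots gives $M\le C_n(\diam\Omega)^{n/(n+1)}\big(\int_E(f^-)^{n+1}/\det a^{ij}\big)^{1/(n+1)}$, and undoing the normalization is exactly the assertion.

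The main obstacle is the covering claim in the middle paragraph: one must verify carefully that every pair $(p,h)$ in the cone-shaped set $S$ is produced by an honest contact point at strictly positive time whose spatial location lies in the interior of $\Omega$ (so that it is a point of $E$ and contributes to the area-formula integral), and then compute $|S|$. Everything else --- the normalization, the Jacobian computation, and the AM--GM step --- is routine once this geometric input is in place.
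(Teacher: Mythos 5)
The paper does not prove this lemma at all: it is quoted verbatim from Lieberman's book (Theorem 7.1) with the explicit remark ``we state without proof,'' so there is no in-paper argument to compare against. Your proposal supplies the standard Krylov--Tso proof via the parabolic normal map $\Psi(s,y)=(\nabla u(s,y),\,u(s,y)-\nabla u(s,y)\cdot y)$, and it is correct: the contact-set containment $S\subset\Psi(\Gamma)$ is argued properly (the first-hitting-time construction forces every maximizer of $u-p\cdot x$ over $[0,s_h]\times\overline\Omega$ to occur at time exactly $s_h$ and at an interior spatial point, because a boundary maximizer would give $h\le|p|\,\diam\Omega$), the row operation yielding $\det D\Psi=\partial_tu\,\det D_x^2u$ is right, the measure computation $|S|=c_nM^{n+1}/(\diam\Omega)^n$ is right, and the weighted AM--GM step correctly converts the differential inequality into the pointwise bound $\partial_tu\,\det(-D_x^2u)\le (f^-)^{n+1}/\big((n+1)^{n+1}\det a^{ij}\big)$ on $E$, where $f\le 0$ automatically. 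Two implicit hypotheses should be acknowledged, though neither is a real gap: you use that $(a^{ij})$ is positive semidefinite (needed anyway for $\det a^{ij}$ in the statement to be meaningful, and satisfied in the paper's application where $a^{ij}=\frac{1}{\partial_0 f}\frac{\partial f}{\partial h_{ij}}$), and you take maxima over $\overline\Omega$ and use continuity of $m(s)$, which presumes $u$ extends continuously to $[0,T]\times\overline\Omega$; if one only has smoothness on the open cylinder, the same argument applied to an exhaustion by slightly smaller cylinders gives the stated estimate in the limit. With those caveats recorded, your argument is a complete and self-contained proof of the cited maximum principle, which is more than the paper itself provides.
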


So we have shown that,  the right hand side of the estimate in Proposition \ref{p3.4} is uniformly bounded with the said dependence.  Then we can proceed in the same way as Section 2 to obtain the $L^{\infty}$ bound (note that from Lemma \ref{l2.7New} and on,  we only used the Moser-Trudinger inequality but not the equation).

\end{document}